\newtheorem{theo}{Theorem}[section]
\newtheorem{prop}[theo]{Proposition}
\newtheorem{lemma}[theo]{Lemma}
\newtheorem{example}[theo]{Example}
\newtheorem{assumption}[theo]{Assumption}
\newtheorem{defn}[theo]{Definition}
\newtheorem{remark}[theo]{Remark}
\newcommand\norm[1]{\left\lVert#1\right\rVert}
\journal{Stochastic Processes and their Applications}
\begin{document}

\begin{frontmatter}



\title{PageRank on inhomogeneous random digraphs}


\author[label1]{Jiung Lee}
\author[label2]{Mariana Olvera-Cravioto}
\address[label1]{Department of Industrial Engineering and Operations Research, University of California, Berkeley}
\address[label2]{Department of Statistics and Operations Research, University of North Carolina at Chapel Hill}



\begin{abstract}
We study the typical behavior of Google's PageRank algorithm on inhomogeneous random digraphs, including directed versions of the Erd\H os-R\'enyi model, the Chung-Lu model, the Poissonian random graph and the generalized random graph. Specifically, we show that the rank  of a randomly chosen vertex converges weakly to the attracting endogenous solution to the stochastic fixed-point equation 
$$\mathcal{R}\stackrel{\mathcal{D}}{=}\sum_{i=1}^{\mathcal{N}}\mathcal{C}_i \mathcal{R}_i+\mathcal{Q},$$ 
where  $(\mathcal{N},\mathcal{Q},\{\mathcal{C}_i\}_{i \geq 1})$ is a real-valued vector with $\mathcal{N}\in\mathbb{N}$, and the $\{\mathcal{R}_i\}$ are i.i.d.~copies of $\mathcal{R}$, independent of $(\mathcal{N},\mathcal{Q},\{\mathcal{C}_i\}_{i \geq 1})$; $\stackrel{\mathcal{D}}{=}$ denotes equality in distribution. This result provides further evidence of the power-law behavior of PageRank on graphs whose in-degree distribution follows a power law.
\end{abstract}

\begin{keyword}
PageRank \sep ranking algorithms \sep directed random graphs \sep complex networks \sep multi-type branching processes \sep weighted branching processes \sep stochastic fixed-point equations \sep smoothing transform \sep power laws.



\MSC 05C80 \sep 60J80 \sep 68P20 \sep 41A60 \sep 37A30 \sep 60B10

\end{keyword}

\end{frontmatter}


\section{Introduction}

In the recent decades, a growing amount of data and computer power has motivated the development of algorithms capable of efficiently organizing and analyzing large data sets. In many cases, this data is highly interconnected, and can be represented in the form of complex networks. Some important examples include the Internet and the World Wide Web, telecommunication networks, electrical power grids, protein-protein interactions, and the various social networks that have become an integral part of our society. Interestingly, many of these networks share some basic characteristics that we have learned to expect, such as short typical distances between nodes, known as the {\em small-world property}, and highly variable degrees whose distributions follow a power-law, known as the {\em scale-free} property. Of special interest is the problem of identifying relevant or central nodes in these networks.

We focus on the analysis of a general form of Google's PageRank algorithm \cite{Brin_Page}, which was originally created to rank webpages in the World Wide Web. PageRank is a popular algorithm for ranking nodes in complex networks due to its ability to efficiently identify important/relevant nodes. Its typical behavior on scale-free directed complex networks has also been an important research topic, since abundant empirical evidence suggests that the distribution of the ranks produced by PageRank follows a power-law distribution with the same tail index as the in-degree distribution  \cite{Pan_Rag_Upfal, Avra_Leb, Lit_Sch_Vol, Volk_Litv_10, Vol_Lit_Don, Jel_Olv_10}. The first rigorous proof of why this power-law behavior is observed was given in \cite{chenetal}, where it was shown that the rank of a randomly chosen node in a graph generated via the directed configuration model \cite{Chen_Olv_13} has a limiting distribution exhibiting power-law tails whenever the in-degree distribution is scale-free. Here, we extend this analysis to a different class of random graph models that includes as special cases directed versions of classical models such as the Erd\H os-R\'enyi graph \cite{Erdos, Gilbert, Austin, Janson, Bollobas2, Durrett1}, the Chung-Lu model \cite{Chunglu, Chunglu2, Chunglu3, Chunglu4, Lu}, the Poissonian random graph \cite{Norros, Hofstad1, Esker_Hofs_Hoog} and the generalized random graph \cite{Hofstad1, Brittonetal, Esker_Hofs_Hoog}.  Since the scale-free property of the degrees in these models (i.e., their power-law behavior)  is due to node-specific attributes which can also be used to influence the rankings produced by PageRank, we believe they provide a more natural way of modeling and understanding the behavior of ranking algorithms on complex networks than the directed configuration model.

As is the case for the directed configuration model, the power-law behavior of the ranks produced by PageRank can be explained by arguing that the limiting distribution of the rank of a randomly chosen node can be written in terms of the attracting endogenous solution to a stochastic-fixed point equation (SFPE) of the form
$$
\mathcal{R} \stackrel{\mathcal{D}}{=} \sum_{i=1}^{\mathcal{N}} \mathcal{C}_i \mathcal{R}_i + \mathcal{Q},
$$
where the $\{\mathcal{R}_i\}$ are i.i.d.~copies of $\mathcal{R}$, independent of the vector $(\mathcal{N}, \mathcal{Q}, \{ \mathcal{C}_i \}_{i \geq 1})$, with the $\{\mathcal{C}_i\}$ i.i.d.~and independent of $(\mathcal{N}, \mathcal{Q})$. The random variable $\mathcal{N}$ corresponds to the in-degree distribution of the network being analyzed, and the power law behavior of the rank distribution follows from the known asymptotic equivalence 
$$P(\mathcal{R} > x) \sim K P(\mathcal{N} > x), \qquad x \to \infty,$$
for some constant $0 < K < \infty$, when $\mathcal{N}$ has a power-law distribution \cite{Volk_Litv_10, Jel_Olv_10}.  Theorem~\ref{T.MainPageRank} in this paper shows that the same type of representation holds for the family of inhomogeneous random digraphs studied here, provided the in-degree and out-degree of the same vertex are asymptotically independent. Therefore, the results in this paper provide further evidence of the power-law behavior of PageRank in scale-free directed networks.

The remainder of the paper is organized as follows. Section~\ref{S.Model} describes the family of inhomogeneous random digraphs mentioned above, and includes some of its most basic properties, in particular, its ability to generate inhomogeneous directed graphs with a wide range of degree distributions, including scale-free ones. In Section~\ref{S.PageRank} we state our main result on the distribution of the ranks produced by a generalized form of PageRank, including the description of a three-step approach towards its proof, and in Section~\ref{S.Proofs} we provide all the proofs. Numerical experiments as well as additional results for specific models are included in the Appendix.

\section{A family of inhomogeneous random digraphs} \label{S.Model}

As mentioned in the introduction, the fact that many of the complex networks in the real world exhibit highly variable degrees, often with tails that appear to follow a power law, motivates our interest in random graph models capable of generating inhomogeneous degrees. One model that produces graphs from any prescribed (graphical) degree sequence is the configuration or pairing model \cite{bollobas, Hofstad1}, which assigns to each vertex in the graph a number of half-edges equal to its target degree and then randomly pairs half-edges to connect vertices. The resulting graph, when the pairing process does not create self-loops or multiple edges, is known to have the distribution of a uniformly chosen graph among all graphs having the prescribed degree sequence.  If one chooses this degree sequence according to a power-law, one immediately obtains a scale-free graph. 

Alternatively, one could think of obtaining the scale-free property (power-law degree distribution) as a consequence of how likely different nodes are to have an edge between them. In the spirit of the classical Erd\H os-R\'enyi graph \cite{Erdos, Gilbert, Austin, Janson, Bollobas2, Durrett1}, we assume that whether there is an edge between vertices $i$ and $j$ is determined by a coin-flip, independently of all other edges. Unfortunately, this elegant and simple rule is known to produce highly homogeneous degrees, Poisson distributed in the limit, making it inappropriate for modeling most real-world networks. Several models capable of producing graphs with inhomogeneous degrees while preserving the independence among edges have been suggested in the recent literature, including: the Chung-Lu model \cite{Chunglu, Chunglu2, Chunglu3, Chunglu4, Lu}, the Norros-Reittu model (or Poissonian random graph) \cite{Norros, Hofstad1, Esker_Hofs_Hoog}, and the generalized random graph \cite{Hofstad1, Brittonetal, Esker_Hofs_Hoog}, to name a few. In all of these models, the inhomogeneity of the degrees is created by allowing the success probability of each coin-flip to depend on the ``attributes" of the two vertices being connected; the scale-free property can then be obtained by choosing the attributes according to a power-law. We briefly mention that it was shown in \cite{Esker_Hofs_Hoog} that all these models also exhibit the small-world property, i.e., small typical distances between vertices, hence, we expect the same to be true of their directed counterparts.

We now give a precise description of the family of directed random graphs that we study in this paper, which includes as special cases the directed versions of all the models mentioned above. Throughout the paper we refer to a directed graph $\mathcal{G}(V_n, E_n)$ on the vertex set $V_n = \{1 , 2, \dots, n\}$ simply as a {\em random digraph} if the event that edge $(i,j)$ belongs to the set of edges $E_n$ is independent of all other edges. 

In order to obtain inhomogeneous degree distributions, to each vertex $i \in V_n$ we assign a {\em type} ${\bf W}_i = (W_i^-, W_i^+) \in \mathbb{R}_+^2$, which will be used to determine how likely vertex $i$ is to have inbound/outbound neighbors\footnote{The $-$ and $+$ superscripts refer to the inbound or outbound nature of edges in the graph, and are not related to the positive and negative parts of a real number.}. The sequence of types $\{{\bf W}_i: i \geq 1\}$ is assumed to have a limiting behavior, in the sense that the empirical joint distribution satisfies:
\begin{equation} \label{eq:TypeDistr}
F_n(u,v) = \frac{1}{n} \sum_{i=1}^n 1(W_i^- \leq u, \, W_i^+ \leq v) \xrightarrow{P} F(u,v), \qquad \text{as } n \to \infty,
\end{equation}
for all continuity points of some distribution $F$, where $F$ is defined on the space $\mathcal{S} = \mathbb{R}_+^2$ and $\stackrel{P}{\to}$ denotes convergence in probability. Let $\mathscr{F}_n =\sigma( {\bf W}_i: 1 \leq i \leq n)$, and define $\mathbb{P}_n(\cdot) = P( \cdot | \mathscr{F}_n)$ and $\mathbb{E}_n [ \cdot] = E[ \cdot | \mathscr{F}_n]$ to be the conditional probability and conditional expectation, respectively, given the type sequence.  Later in Section~\ref{S.PageRank} we will enlarge the type vectors to include additional vertex attributes.

\begin{remark}
 In general, depending on the nature of the type sequence (e.g., a deterministic sequence of numbers), it may be necessary to consider a double sequence $\{{\bf W}_i^{(n)}: i \geq 1, \, n \geq 1\}$ in order to satisfy \eqref{eq:TypeDistr}. In practice, an easy way to avoid the need for considering double sequences is to assume that the  type sequence $\{{\bf W}_i : i \geq 1\}$ consists of i.i.d.~observations from distribution $F$. 
\end{remark}

We now define our family of random digraphs using the conditional probability, given the type sequence, that edge $(i,j) \in E_n$, 
\begin{equation} \label{eq:EdgeProbabilities}
p_{ij}^{(n)} \triangleq \mathbb{P}_{n} \left( (i,j) \in E_n \right) = 1 \wedge  \frac{W_i^+ W_j^-}{\theta n} (1 + \varphi_{n}({\bf W}_i, {\bf W}_j)) , \qquad 1 \leq i \neq j \leq n,
\end{equation}
where $-1 < \varphi_{n}({\bf W}_i, {\bf W}_j) $ a.s., and $n^{-1} \sum_{i=1}^n (W_i^- + W_i^+) \stackrel{P}{\to} \theta > 0$ as $n \to \infty$. 

We point out that the term $\varphi_{n}({\bf W}_i, {\bf W}_j)= \varphi(n, {\bf W}_i, {\bf W}_j, \mathscr{W}_n)$ may depend on the entire sequence $\mathscr{W}_n \triangleq \{{\bf W}_i: 1 \leq i \leq n\}$, on the types of the vertices $(i,j)$, or exclusively on $n$. Here and in the sequel, $x \wedge y = \min\{x,y\}$ and $x \vee y = \max\{x, y\}$. In the context of \cite{bollobasetal2}, definition \eqref{eq:EdgeProbabilities} corresponds to the so-called rank-1 kernel, i.e., $\kappa({\bf W}_i, {\bf W}_j) = \kappa_+({\bf W}_i) \kappa_-({\bf W}_j)$, with $\kappa_+({\bf W}) = W^+/\sqrt{\theta}$ and $\kappa_-({\bf W}) = W^-/\sqrt{\theta}$. 

\begin{example} \label{E.IRG}
Directed versions of some known random graph models covered by \eqref{eq:EdgeProbabilities}:
\begin{itemize}
\item Directed Erd\H os-R\'enyi model: 
$$p_{ij}^{(n)} = \frac{\lambda}{2n}, \qquad 1 \leq i \neq j \leq n,$$
for $\lambda > 0$, which corresponds to taking $W_i^- = W_i^+ = \lambda$ for all $1 \leq i \leq n$. This graph produces homogeneous graphs with Poisson$(\lambda)$ degrees. Here, $\varphi_{n}({\bf W}_i, {\bf W}_j) \equiv 0$ for all $1 \leq i \neq j \leq n$. 

\item Directed Chung-Lu model:
$$p_{ij}^{(n)} = \frac{W_i^+ W_j^-}{L_n} \wedge 1, \qquad 1 \leq i \neq j \leq n,$$
where $L_n = \sum_{i=1}^n (W_i^- + W_i^+)$. This model is defined for any nonnegative sequences $\{W_i^-: i \geq 1\}$ and $\{W_i^+: i \geq 1\}$ possessing some limiting distributions, e.g., power-laws, usually with finite second moments. Here, $\varphi_{n}({\bf W}_i, {\bf W}_j) = \theta n/L_n - 1$ for all $1 \leq i \neq j \leq n$. 

\item Directed generalized random graph:
$$p_{ij}^{(n)} = \frac{W_i^+ W_j^-}{L_n + W_i^+ W_j^-} , \qquad 1 \leq i \neq j \leq n,$$
where $L_n$ is defined as above. Since the ratios in the definition of $p_{ij}^{(n)}$ are self-normalized, it provides a more natural model for graphs with infinite variance degrees. Here, $\varphi_{n}({\bf W}_i, {\bf W}_j)  = \theta n/(L_n + W_i^+ W_j^-)  - 1$ for $1 \leq i \neq j \leq n$. 

\item Directed Poissonian random graph or Norros-Reittu model: 
$$p_{ij}^{(n)} = 1 - e^{-W_i^+ W_j^-/L_n}, \qquad 1 \leq i \neq j \leq n,$$
where $L_n$ is defined as above. Here, $\varphi_{n}({\bf W}_i, {\bf W}_j)  = (1 - e^{-W_i^+ W_j^-/L_n}) \theta n/(W_i^+ W_j^-) - 1$ for $1 \leq i\neq j \leq n$. 

\end{itemize}
\end{example}

From a modeling perspective, one can think of $W_i^+$ as an attribute of vertex $i$ that determines how likely it is for it to have outbound neighbors, and $W_i^-$ as an attribute that indicates its popularity, or likelihood that other vertices may have edges pointing towards it. In other words, $W_i^+$ controls the out-degree of vertex $i$ and $W_i^-$ its in-degree. In applications, e.g., the World Wide Web, these two attributes can be used to model how trustworthy a webpage is, how valuable/relevant is its content, or how carefully it chooses the webpages it references.

\subsection{Degree distributions}

Our first result in the paper establishes that the family of random digraphs defined via \eqref{eq:EdgeProbabilities} produces inhomogeneous graphs whose degree distribution can be modeled through that of the type distribution.  
A verification of all our assumptions for each of the models in Example~\ref{E.IRG} when the type sequence $\{ {\bf W}_i: i \geq 1\}$ consists of i.i.d.~random vectors is included in \ref{Appx.IIDsequences}.

\begin{assumption} \label{A.Types}
Let $\mathcal{G}(V_n, E_n)$ be a random digraph having type sequence $\{ {\bf W}_i: i\geq 1\}$ and edge probabilities given by \eqref{eq:EdgeProbabilities}. Suppose further that:
\begin{enumerate}
\item[a)] The type sequence $\{ {\bf W}_i: i\geq 1\}$ satisfies \eqref{eq:TypeDistr}.
\item[b)] The following limits hold in probability:
\begin{align*}
E[W^-]  &= \lim_{n \to \infty} \frac{1}{n} \sum_{i=1}^n W_i^-  \qquad \text{and} \qquad E[W^+]  = \lim_{n \to \infty} \frac{1}{n} \sum_{i=1}^n W_i^+,
\end{align*}
with $\theta = E[W^- + W^+] < \infty$. 
\item[c)] $\displaystyle \mathcal{E}_n \triangleq \frac{1}{n} \sum_{i=1}^n \sum_{1 \leq j \leq n, j \neq i} |p_{ij}^{(n)} - (r_{ij}^{(n)} \wedge 1)| \xrightarrow{P} 0$ as $n \to \infty$, where $r_{ij}^{(n)} = W_i^+ W_j^-/(\theta n)$. 
\item[d)] $\displaystyle \frac{1}{n} \sum_{i=1}^n \sum_{1 \leq j \leq n, j \neq i} \sum_{1 \leq k \leq n, k\neq i} p_{ji}^{(n)} p_{ik}^{(n)} \xrightarrow{P} E[W^- W^+] E[W^+] E[W^-]/\theta^2$ as $n \to \infty$, with \newline $E[W^- W^+] < \infty$. 
\end{enumerate}
\end{assumption}

We point out that Assumption~\ref{A.Types} implies that $|\varphi_n({\bf W}_i, {\bf W}_j)| \xrightarrow{P} 0$ as $n \to \infty$ for any $i,j \geq 1$. 

We now define the in-degree and out-degree of vertex $i \in V_n$ according to 
$$D_i^- = \sum_{j \in V_n, \, j \neq i} X_{ji} \qquad \text{and} \qquad  D_i^+ = \sum_{j \in V_n, \ j \neq i} X_{ij}, $$
respectively, where $X_{ij} = 1( (i,j) \in E_n)$ is the indicator function of whether edge $(i,j)$ is present in the graph. Note that from the independent edges assumption, we have that the $\{X_{ij}: 1 \leq i \neq j \leq n\}$ form a sequence of independent Bernoulli random variables, with $\mathbb{P}_{n}( X_{ij} = 1) = p_{ij}^{(n)}$. 

The following theorem provides the distribution of the in-degree and out-degree of a randomly chosen vertex, i.e., a typical vertex, in a graph generated via our model. Its proof is given in Section~\ref{SS.DegreeProof}.

\begin{theo} \label{T.MixedPoisson}
Under Assumption~\ref{A.Types} (a)-(c), the degrees  $(D_{\xi}^-, D_{\xi}^+)$ of a randomly chosen vertex in $\mathcal{G}(V_n, E_n)$ satisfy, as $n \to \infty$, 
$$\sup_{A \subseteq \mathbb{N}^2} \left| \mathbb{P}_n\left( (D_{\xi}^-, D_{\xi}^+) \in A \right) - P((Z^-, Z^+) \in A) \right| \xrightarrow{P} 0,$$
and 
$$\mathbb{E}_n [ D_\xi^\pm] \stackrel{P}{\longrightarrow} E[ Z^\pm] = \frac{E[W^-] E[W^+]}{\theta},$$
where $Z^-$ and $Z^+$ are mixed Poisson random variables with mixing parameters, $\frac{E[W^+]}{\theta} W^- $ and $ \frac{E[W^-]}{\theta} W^+$, respectively, with $Z^-$ and $Z^+$ conditionally independent given $(W^-, W^+) $. Moreover, if Assumption~\ref{A.Types} (a)-(d) holds then, as $n \to \infty$,
$$ \mathbb{E}_n[ D_\xi^- D_\xi^+] \stackrel{P}{\longrightarrow} E [ Z^- Z^+] = \frac{E[W^- W^+] E[W^-] E[W^+]}{\theta^2}.$$
\end{theo}

\begin{remark} \label{R.PowerLaw}
To relate this result with scale-free graphs where at least one of the degree distributions, usually the in-degree, follows a power-law, we point out that when $W^-$ ( $W^+$) has a regularly varying distribution with index $-\alpha < -1$, i.e., $P(W^- > x) = x^{-\alpha} L(x)$ for some slowly varying function $L$, then, by Proposition~8.4 in \cite{Grandell}, we have that $Z^-$ ($Z^+$) is also regularly varying with the same index. Furthermore, it can be shown that if $(W^-, W^+)$ is jointly regularly varying (possibly in the non-standard sense defined in \cite{samorodnitskyetal}), then so is $(Z^-, Z^+)$, however, for our analysis of PageRank we will impose that $W^-$ and $W^+$ be independent, so only the marginal distributions of $Z^-$ and $Z^+$ are relevant to our main result. 
\end{remark}

 \section{Generalized PageRank} \label{S.PageRank}
 
We now move on to the analysis of the typical behavior of the PageRank algorithm on the family of inhomogeneous random digraphs described in Section~\ref{S.Model}. Our main result shows that the distribution of the ranks produced by the algorithm converges to that of the attracting endogenous solution, $\mathcal{R}$, to a linear SFPE. Moreover, since the behavior of $\mathcal{R}$ is known to follow a power-law when the limiting in-degree distribution does, our theorem provides further evidence of the universality of the so-called ``power-law hypothesis" on scale-free complex networks \cite{chenetal}. For completeness, we give below a brief description of the algorithm, which is well-defined for any directed graph $\mathcal{G}(V_n, E_n)$ on the vertex set $V_n = \{1, 2, \dots, n\}$ with edges in the set $E_n$. 

Let $D_i^-$ and $D_i^+$ denote the in-degree and out-degree, respectively, of vertex $i$ in $\mathcal{G}(V_n, E_n)$.  We refer to the sequence $\{ (D_i^-, D_i^+): 1 \leq i \leq n\}$ as the bi-degree sequence of the graph $\mathcal{G}(V_n, E_n)$. The generalized PageRank vector ${\bf r} = (r_1, \dots, r_n)$ is the unique solution to the following system of equations:
\begin{equation} \label{eq:genPageRank}
r_i = \sum\limits_{(j,i)\in E_n} \frac{\zeta_j}{D_j^+} \cdot r_j + q_i,\quad i=1, \dots,n,
\end{equation}
where ${\bf q} = (q_1, \dots, q_n)$ is known as the personalization or teleportation vector (usually, a probability vector), and the $\{\zeta_i\}$ are referred to as the damping factors. In the original formulation of PageRank \cite{Brin_Page}, the personalization values and the damping factors are given, respectively, by $q_i = (1- c)/n$ and $\zeta_i = c$ for all $1 \leq i \leq n$; the constant $c \in (0,1)$ is known as the ``damping factor". The formulation given in \cite{chenetal} is more general, and it allows any choice for both the personalization values and the damping factors, provided that $\max_{1 \leq i \leq n} |\zeta_i| \leq c <  1$. We refer the reader to \S1.1 in \cite{chenetal} for further details on the history of PageRank, its applications, and a matrix representation of the solution ${\bf r}$ to \eqref{eq:genPageRank}. 

In order to analyze {\bf r}  on directed complex networks, we first eliminate the dependence on the size of the graph by computing the scale free ranks $(R_1, \dots, R_n) = {\bf R} \triangleq n {\bf r}$, which corresponds to solving:
\begin{equation} \label{eq:scaleFreePageRank}
R_i = \sum\limits_{(j,i)\in E_n} C_j R_j + Q_i, \quad i = 1, \dots, n,
\end{equation}
where $Q_i = q_i n$ and $C_j = \zeta_j/D_j^+$. We refer to the $\{C_j\}$ as the weights. 

On scale-free graphs, i.e., where the in-degree sequence (or both the in-degree and out-degree sequences) follow a power law distribution, the power law hypothesis states that the distribution of the ranks $\{R_i: 1 \leq i \leq n\}$ will also have a power-law with the same index as that of the in-degrees. The first approach towards a proof of this phenomenon was given in \cite{Vol_Lit_Don, Lit_Sch_Vol, Jel_Olv_10}, where the tree heuristic commonly used in the analysis of locally tree-like random graphs yields a stochastic fixed-point equation of the form
\begin{equation} \label{eq:SFPE}
\mathcal{R} \stackrel{\mathcal{D}}{=} \sum_{j=1}^{\mathcal{N}} \mathcal{C}_j \mathcal{R}_j + \mathcal{Q},
\end{equation}
where $\mathcal{N}$ is a random variable distributed according to the limiting in-degree distribution of the graph, $\mathcal{Q}$ has the limiting distribution of the personalization values, the weights $\{ \mathcal{C}_j\}$ are i.i.d.~and independent of $(\mathcal{N}, \mathcal{Q})$, and are size-biased versions of the weights $\{C_j\}$ in \eqref{eq:scaleFreePageRank}, and the $\{\mathcal{R}_i\}$ are i.i.d.~copies of $\mathcal{R}$.  The connection between \eqref{eq:scaleFreePageRank} and \eqref{eq:SFPE} can be understood by interpreting $\mathcal{R}$ as the rank of a randomly chosen  vertex, with $(\mathcal{N}, \mathcal{Q})$ denoting its in-degree and personalization value, respectively, and then arguing that, provided the neighborhood of the chosen vertex looks locally like a tree, the ranks of its inbound neighbors should have the same distribution as $\mathcal{R}$. That the weights $\{\mathcal{C}_j\}$ in \eqref{eq:SFPE} are different from the $\{C_j\}$ appearing in \eqref{eq:scaleFreePageRank}, and are instead size-biased versions of them,  follows from the observation that vertices with high out-degrees are more likely to be the neighbors of the randomly chosen vertex. 

The heuristic described above was first made rigorous in \cite{chenetal}, where it was shown that on graphs generated via the directed configuration model \cite{Chen_Olv_13}, the rank of a randomly chosen vertex converges in distribution, as the size of the graph grows to infinity, to a random variable 
\begin{equation*}
\mathcal{R}^* = \sum_{i=1}^{\mathcal{N}_0} \mathcal{C}_i \mathcal{R}_i + \mathcal{Q}_0,
\end{equation*}
where the $\{ \mathcal{R}_i\}$ are i.i.d.~copies of the attracting endogenous solution to \eqref{eq:SFPE}, and are independent of $(\mathcal{N}_0, \mathcal{Q}_0, \{ \mathcal{C}_i\}_{i \geq 1})$. The vector $(\mathcal{N}_0, \mathcal{Q}_0)$ may have a different distribution from that of $(\mathcal{N}, \mathcal{Q})$ in \eqref{eq:SFPE} depending on how we choose the first vertex (it has the same distribution as $(\mathcal{N}, \mathcal{Q})$ when the first vertex is chosen uniformly at random and the in-degree and out-degree are asymptotically independent, which is consistent with the approach we take here). That the solution $\mathcal{R}$ to \eqref{eq:SFPE} has a power-law distribution when $\mathcal{N}$ does has been the topic of a number of papers \cite{Volk_Litv_10, Jel_Olv_10, Jel_Olv_12, Jel_Olv_12_2}, and together with the results in \cite{chenetal} (see Theorems~6.4 and 6.6) provides the first proof of the power-law hypothesis on a complex network. We now show that a similar result also holds for the family of inhomogeneous random digraphs considered here.

\subsection{PageRank on inhomogeneous random digraphs} \label{SS.MainPageRank}

As with the analysis done in \cite{chenetal} on the directed configuration model, the key idea is to couple the rank of a randomly chosen vertex with the rank of the root node of a tree, in this case, a multi-type branching process. In order to incorporate vertex information used by the algorithm, as described by \eqref{eq:genPageRank}, we expand the type of vertex $i$ to be of the form  ${\bf W}_i = (W^-_i, W^+_i, Q_i, \zeta_i) \in \mathbb{R}_+^2 \times \mathbb{R}^2 \triangleq \mathcal{S}$, where the sequence $\{ {\bf W}_i: i \geq 1\}$ satisfies
\begin{equation} \label{eq:ExtTypeDistr}
H_n(u,v,q,t) = \frac{1}{n} \sum_{i=1}^n 1(W_i^- \leq u, W_i^+ \leq v, Q_i \leq q, \zeta_i \leq t) \stackrel{P}{\longrightarrow} H(u,v,q,t), 
\end{equation}
as $n \to \infty$, for all continuity points of some distribution $H$. With some abuse of notation, we continue using $\mathscr{F}_n = \sigma( {\bf W}_i : 1\leq  i \leq n )$ to denote the sigma-algebra generated by $\{{\bf W}_i: 1 \leq i \leq n\}$, along with the corresponding conditional probability and expectation $\mathbb{P}_{n}(\cdot ) = P( \cdot | \mathscr{F}_n)$ and $\mathbb{E}_{n}[ \cdot] = E[ \cdot | \mathscr{F}_n]$. 

We now impose some assumptions on the extended type sequence $\{ {\bf W}_i: i \geq 1\}$.

\begin{assumption} \label{A.ExtendedTypes}
Let $\mathcal{G}(V_n, E_n)$ be a random digraph having type sequence $\{ {\bf W}_i: i\geq 1\}$ and edge probabilities given by \eqref{eq:EdgeProbabilities}. Suppose further that:
\begin{enumerate}
\item[a)] The extended type sequence $\{ {\bf W}_i: i\geq 1\}$ satisfies \eqref{eq:ExtTypeDistr}.
\item[b)] The following limits hold in probability:
\begin{align*}
E[W^-]  &= \lim_{n \to \infty} \frac{1}{n} \sum_{i=1}^n W_i^- , \qquad E[W^+]  = \lim_{n \to \infty} \frac{1}{n} \sum_{i=1}^n W_i^+, \\E[|Q|]  &= \lim_{n \to \infty} \frac{1}{n} \sum_{i=1}^n |Q_i| ,  \qquad \text{and} \qquad E[|\zeta|]  = \lim_{n \to \infty} \frac{1}{n} \sum_{i=1}^n |\zeta_i|,
\end{align*}
with $\theta = E[W^- + W^+] < \infty$ and $E[ |Q| + |\zeta|] < \infty$. 
\item[c)] $\displaystyle \mathcal{E}_n = \frac{1}{n} \sum_{i=1}^n \sum_{1 \leq j \leq n, j \neq i} |p_{ij}^{(n)} - (r_{ij}^{(n)} \wedge 1) | \xrightarrow{P} 0$ as $n \to \infty$, where $r_{ij}^{(n)} = W_i^+ W_j^-/(\theta n)$.
\item[d)] $|\zeta_i| \leq c < 1$ for all $i = 1, \dots, n$.
\item[e)] The following limits hold in probability:
$$E[W^+W^-] = \lim_{n \to \infty} \frac{1}{n} \sum_{i=1} W_i^- W_i^+ \qquad \text{and} \qquad E[W^+|Q|] = \lim_{n \to \infty} \frac{1}{n} \sum_{i=1} W_i^+ |Q_i|,$$
with $E[ W^+ W^- + W^+ |Q|] < \infty$. 
\item[f)] The vectors $(W^-, Q)$ and $(W^+, \zeta)$ are independent.
\end{enumerate}
\end{assumption}

\begin{remark}
Note that Assumption~\ref{A.ExtendedTypes} (a)-(c) implies Assumption~\ref{A.Types} (a)-(c).
\end{remark}

Our main result on the distribution of the rank of a randomly chosen vertex in the inhomogeneous random digraph from Section~\ref{S.Model} is given below. To avoid repetition, we refer the reader to \cite{chenetal} or \cite{Jel_Olv_12} for a detailed description of the attracting endogenous solution $\mathcal{R}$ to \eqref{eq:SFPE}, as well as its asymptotic behavior in terms of that of $\mathcal{N}, \mathcal{Q}, \mathcal{C}$; $\Rightarrow$ denotes weak convergence.

\begin{theo} \label{T.MainPageRank}
Suppose that Assumption~\ref{A.ExtendedTypes} holds,  and let $R_\xi$ denote the rank of a uniformly chosen vertex in the inhomogeneous random digraph $\mathcal{G}(V_n, E_n)$. Then, as $n \to \infty$, 
\begin{equation} \label{eq:LinearCombination}
R_\xi \Rightarrow \mathcal{R},
\end{equation}
where $\mathcal{R}$ is the attracting endogenous solution to \eqref{eq:SFPE}. The distributions of all the random variables involved in \eqref{eq:SFPE} are given below:
\begin{align*}
P(\mathcal{N} = m, \mathcal{Q} \in dq) &= E\left[1(Q \in dq) \cdot \frac{e^{-E[W^+] W^-/\theta} (E[W^+] W^-/\theta)^m}{m!} \right], \quad m = 0,1, \dots, \\
P(\mathcal{C}_1 \in dt) &= \frac{E[ 1(\zeta/(Z^++1) \in dt ) W^+]}{E[W^+]},
\end{align*}
and $Z^+$ is a mixed Poisson random variable with parameter $E[W^-] W^+/\theta$.
\end{theo}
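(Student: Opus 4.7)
The plan is to follow a three-step strategy analogous to the one used in \cite{chenetal} for the directed configuration model. Since $|\zeta| \leq c < 1$, one can unroll \eqref{eq:scaleFreePageRank} as a geometrically convergent sum over inbound paths, so truncating at paths of length at most $k$ produces an error bounded by $c^{k+1} \cdot n^{-1} \sum_i |Q_i|$, which becomes negligible after taking $k \to \infty$. The heart of the proof is therefore to (i) couple the depth-$k$ inbound exploration tree rooted at a uniform vertex $\xi$ with a marked multi-type branching process tree $\mathcal{T}_k$ whose offspring counts, personalization values, and weights match the $(\mathcal{N}, \mathcal{Q}, \mathcal{C})$ distributions in the statement, (ii) show that the truncated graph rank $R_\xi^{(k)}$ is close in probability to the corresponding tree rank $R_\emptyset^{(k)}$, and (iii) let $k \to \infty$ and invoke the attracting property of the endogenous solution to \eqref{eq:SFPE}.

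For step (i), I perform a breadth-first exploration along inbound edges starting at $\xi$. By Assumption~\ref{A.TypesAssum}(b), with probability at least $1 - n^{-\zeta}$ all relevant $\varphi_{ij}(n)$ are $o(1)$. Conditionally on the current boundary vertex $u$ and on the unexplored types, the new predecessors of $u$ form an independent Bernoulli sample with parameters $p_{vu}^{(n)} \approx W_v^- W_u^+ / (\theta n)$. A Poissonization argument identical in spirit to the one proving Theorem~\ref{T.MixedPoisson} then shows that the number of new predecessors is asymptotically mixed Poisson with mean $E[W^-] W_u^+ / \theta$, and that a given vertex $v$ is sampled with probability proportional to $W_v^-$, so its type is size-biased in the $W^-$ coordinate. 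The independence of $(W^+, Q)$ and $(W^-, \zeta)$ in Assumption~\ref{A.TypesAssum}(a) ensures that the $(W^+, Q)$ marginal of a sampled predecessor is unbiased, which yields the claimed laws of $\mathcal{N}, \mathcal{Q}$ and the size-biasing appearing in the formula for $\mathcal{C}_1$. Controlling the probability of collisions during the exploration (i.e.\ the same vertex appearing twice among the $O(k)$ discovered) is $o(1)$ by a union bound using $E[W^-] < \infty$ and the $n^{-\eta}$ rate in Assumption~\ref{A.TypesAssum}(b), giving a total-variation coupling to $\mathcal{T}_k$ that succeeds with probability $1 - o(1)$.

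Step (ii) is the technical core of the proof. The graph weight along a discovered path is $C_{v_j} = \zeta_{v_j}/D_{v_j}^-$, and this must be matched with the tree weight $\mathcal{C}_j = \zeta_j/(Z_j^- + 1)$, where the $+1$ accounts for the already-revealed out-edge that made $v_j$ a predecessor and $Z_j^-$ is an independent mixed Poisson with mean $E[W^+] W_j^-/\theta$. One must show that $D_{v_j}^-$, jointly over the $O(k)$ vertices in $\mathcal{T}_k$, is well-approximated by $Z_{v_j}^- + 1$ and is asymptotically conditionally independent of the tree structure above $v_j$. The main obstacle lies here: unlike the configuration model of \cite{chenetal}, the edges $(v_j, w)$ to unexplored $w$ are not determined by a half-edge supply but depend on the full random type sequence $\mathscr{W}_n$ through \eqref{eq:EdgeProbabilities}. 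Disentangling this requires the quantitative moment bound $E[(W^-)^2] < \infty$ from Assumption~\ref{A.TypesAssum}(a), which in particular implies the second-moment condition on $n^2 E[p_{21}^{(n)} p_{13}^{(n)}]$ in Assumption~\ref{A.Types} used in the proof of Theorem~\ref{T.MixedPoisson}. Once $|R_\xi^{(k)} - R_\emptyset^{(k)}| \stackrel{P}{\to} 0$ is established for each fixed $k$, step (iii) is routine: the iterates $R_\emptyset^{(k)}$ converge a.s.\ and in $L^1$ to the unique attracting endogenous solution $\mathcal{R}$ (cf.\ \cite{Jel_Olv_12}), and a standard diagonal argument letting $n \to \infty$ followed by $k \to \infty$ yields \eqref{eq:LinearCombination}.
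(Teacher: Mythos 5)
Your high-level strategy -- truncate the PageRank recursion at depth $k$, couple the inbound exploration with a marked branching process, and invoke the attracting property -- is exactly the one the paper pursues, and you correctly identify the roles of the independence assumption on $(W^+,Q)$ versus $(W^-,\zeta)$ and of the second moment of $W^-$. However, there are two substantive gaps in the way you have set it up.

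First, in step (i) you propose coupling the exploration with a tree whose offspring counts and weights already have the \emph{limiting} distributions $(\mathcal{N},\mathcal{Q},\mathcal{C})$. But the natural coupled object (the paper's PBT) necessarily has $n$-dependent laws: the offspring distribution of a node of type ${\bf W}_s$ is $\text{Poisson}(L_n^- W_s^+/(\theta n))$, the child's type is drawn from the \emph{empirical} measure $W_j^-/L_n^-$, and the marks $\mathcal{D}_{\bf i}-1$ are $\text{Poisson}(W_s^- L_n^+/(\theta n))$, all of which involve $L_n^\pm$ rather than $E[W^\pm]$. Consequently, after the coupling one is left with the rank $\hat R_\emptyset^{(n,k_n)}$ of the root of an $n$-dependent weighted branching process, and a genuine extra step is needed to show that this converges to $\mathcal{R}$. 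Your reference to \cite{Jel_Olv_12} covers the fixed-branching-vector case; to handle the $n$-dependence, the paper proves Kantorovich--Rubinstein convergence of the branching vectors (Theorem~\ref{T.KRconvergence}) and then invokes a continuity theorem for the attracting endogenous solution map (Theorem~2 of \cite{chenolvera1}). Calling step (iii) ``routine'' hides where the work actually is. You could instead try to split the coupling into two layers -- graph to $n$-dependent tree, then $n$-dependent tree to limiting tree -- but that second layer is precisely the $d_1$-convergence argument you omit.

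Second, your statement that the exploration discovers $O(k)$ vertices, and hence a simple union bound over collisions gives a total-variation coupling with probability $1-o(1)$, is not correct in the supercritical regime $\mu = E[W^+]E[W^-]/\theta > 1$. The boundary of generation $k$ grows like $\mu^k$, and the coupling error terms in Theorem~\ref{T.CouplingBreaks} scale like $n^{-\delta}\sum_{m\le k}\mu_n^{2m}$ and $n^{-\gamma}\sum_{m\le k}\mu_n^m$. Making these negligible forces $k_n$ to grow at most logarithmically in $n$ (the paper takes $k_n = b\log n$ with $b$ tuned to $\delta,\eta,\zeta$ and $\mu$). Your ``standard diagonal argument letting $n\to\infty$ followed by $k\to\infty$'' can in principle be arranged because the truncation error \eqref{eq:Local} is uniform in $n$, but it must be paired with explicit growth control on the exploration boundary; as written, the $O(k)$ collision bound would fail.
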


The proof of Theorem~\ref{T.MainPageRank} is based on a coupling argument between a graph exploration process and a multi-type branching process, which is  similar to the techniques used in \cite{chenetal} for the analysis of generalized PageRank on the directed configuration model. Together with the results in \cite{chenetal} and Remark~\ref{R.PowerLaw}, Theorem~\ref{T.MainPageRank} provides further evidence of the ``universality" of the power-law hypothesis on scale-free directed complex networks. Some numerical examples illustrating the convergence of $R_\xi \Rightarrow \mathcal{R}$ for all the models in Example~\ref{E.IRG} are included in \ref{Appx.Numerical}.

In the following section we explain the main steps involved in the proof of Theorem~\ref{T.MainPageRank}, postponing all the technical proofs to Section~\ref{S.Proofs}.

\subsection{Deriving the SFPE approximation}

To make the proof of Theorem~\ref{T.MainPageRank} easier to follow, we have divided it into three main steps: 1) approximating the rank using the local neighborhood, 2) coupling with a branching process, and 3) proving convergence to the attracting endogenous solution.

\subsubsection{Approximating the rank using the local neighborhood} \label{SSS.LocalNeighborhood}

The first step towards proving Theorem~\ref{T.MainPageRank} consists in showing that it is enough to consider only the local neighborhood of each vertex in the graph to compute its rank. The first observation we make  is that the system of linear equations given by \eqref{eq:scaleFreePageRank} can be written in matrix notation as
$${\bf R} = {\bf R} {\bf M} + {\bf Q},$$
where ${\bf R} = (R_1, \dots, R_n)$, ${\bf Q} = (Q_1, \dots, Q_n)$ and the matrix ${\bf M}$ has $(i,j)$th component
$$M_{ij} = s_{ij} C_i,$$
where $s_{ij}$ is the number of edges from $i$ to $j$. Recall that $C_j = \zeta_j/D_j^+$, where $D_j^+$ is the out-degree of vertex $j$ and $|\zeta_j| \leq c < 1$ for all $j \geq 1$. Since the graphs we consider here are simple, we have $s_{ij} \in \{0, 1\}$, however, the definition of matrix {\bf M} also applies to multigraphs. It follows that the rank vector ${\bf R}$ can be written as
$${\bf R} = {\bf R}^{(n,\infty)} = \sum_{i=0}^\infty {\bf Q} {\bf M}^i.$$
Next, define $(R_1^{(n,k)}, \dots, R_n^{(n,k)}) = {\bf R}^{(n,k)} = \sum_{i=0}^k {\bf Q} {\bf M}^i$, and note that the i.i.d.~nature of the type sequence implies that all the coordinates of the vector ${\bf R}^{(n,\infty)} - {\bf R}^{(n,k)}$  are identically distributed (they are not identically distributed given $\mathscr{F}_n$). It follows from the exact arguments used in Section~4.2 in \cite{chenetal} that for a randomly chosen vertex $\xi$, 
\begin{equation} \label{eq:Local}
\mathbb{P}_n\left( \left| R_\xi^{(n,\infty)} - R_\xi^{(n,k)} \right| > x^{-1} \right) \leq \frac{x c^k}{1-c}  \cdot \frac{1}{n} \sum_{i=1}^n |Q_i|
\end{equation}
for any $x \geq 1$. 

Note that the calculation of each of the $R_i^{(n,k)}$, $i = 1, \dots, n$, requires only information about the vertices in the graph having a directed path to vertex $i$ of length at most $k$, i.e., it can be computed using only the local (inbound) neighborhood of each vertex.

\subsubsection{Coupling with a branching process} \label{SS.Coupling}

Now that we have reduced the problem of analyzing a randomly chosen component of the vector ${\bf R}^{(n,\infty)}$ to that of analyzing the corresponding component of the vector ${\bf R}^{(n,k)}$, the next step is to couple $R_\xi^{(n,k)}$ with the rank of the root node of a branching process. For the directed configuration model analyzed in \cite{chenetal}, the coupling was done with a marked Galton-Watson process, referred to as a ``thorny branching process'' in \cite{chenetal},  that was then used to define a weighted branching process \cite{rosler1}. The same idea works also for the inhomogeneous random digraphs considered here, although the coupling is more easily understood if instead of using from the beginning a marked Galton-Watson process we first consider a marked multi-type branching process. The marks include the number of outbound neighbors, the damping factor and the personalization value of each vertex discovered during the graph exploration process.

As it is usual when analyzing trees, we index the nodes with a label that allows us to trace their entire path from the root. More precisely, denote the root node $\emptyset$, and label its offspring as $\{1, 2, \dots, \hat N_\emptyset\}$, where $\hat N_\emptyset$ is the number of offspring that $\emptyset$ has. Set $\hat A_0 = \{\emptyset\}$ and $\hat A_1 = \{ 1, 2, \dots, \hat N_\emptyset\}$ to be the sets of individuals in generation zero and generation one of the tree, respectively. In general, we use $\hat A_k$ to denote the set of individuals in the $k$th generation of the tree, and a node/individual in $\hat A_k$ has a label of the form ${\bf i} = (i_1, \dots, i_k) \in \mathbb{N}_+^k$. Moreover, the set $\hat A_{k+1}$ can be constructed recursively according to 
$$\hat A_{k+1} = \{ ({\bf i}, j): {\bf i} \in \hat A_k, \, 1 \leq j \leq \hat N_{\bf i} \},$$
where $\hat N_{\bf i}$ is the number of offspring of node ${\bf i}$, and we use $({\bf i}, j) = (i_1, \dots, i_k, j)$ to denote the index concatenation operation; if ${\bf i} = \emptyset$, then $({\bf i}, j) = j$.  We use throughout the paper $\mathcal{U} = \bigcup_{k = 0}^\infty \mathbb{N}_+^k$, with the convention that $\mathbb{N}_+^0 = \{ \emptyset \}$.

To describe the multi-type branching process used in the coupling, we assume that each node in the tree has a type from the set $\mathscr{W}_n = \{ {\bf W}_i: 1 \leq i \leq n\}$, where ${\bf W}_i = (W_i^-, W_i^+, Q_i, \zeta_i)$.  Individuals in the tree have a random number of offspring, potentially of various types, independently of all other nodes. More precisely, if we let $Z_{ji}$ denote the number of offspring of type ${\bf W}_j$ that an individual of type ${\bf W}_i$ has, we have that for $(m_1, \dots, m_n) \in \mathbb{N}^n$,
\begin{equation} \label{eq:MultiOffspring}
\mathbb{P}_{n} \left( Z_{1i} = m_1, \dots, Z_{ni} = m_n \right) =  \prod_{j=1}^n \frac{e^{-q_{ji}^{(n)}} (q_{ji}^{(n)})^{m_j}}{m_j!}, 
\end{equation}
where 
$$q_{ji}^{(n)} =  \frac{  (W_j^+ \wedge a_n) (W_i^- \wedge b_n)}{\theta n}, \qquad 1\leq i,j \leq n,$$
and $a_n, b_n \geq 1$ are sequences to be determined later.
To simplify the notation, we write ${\bar W}_i^+ = W_i^+ \wedge a_n$ and ${\bar W}_i^- = W_i^- \wedge b_n$. Note that the random variables $\{ Z_{ji}: 1 \leq j \leq n\}$ are conditionally independent (given $\mathscr{F}_n$) Poisson random variables with the mean of $Z_{ji}$ equal to $q_{ji}^{(n)}$. To avoid the label of a node from giving us any information about its type, we assume that all $\hat N_{\bf i}$ offspring of node ${\bf i}$ are permuted uniformly at random before being assigned a label of the form $({\bf i}, j)$, $j = 1, \dots, \hat N_{\bf i}$. 

To make this a marked multi-type branching process, we give to each node ${\bf i}$ in the tree a mark $\hat D_{\bf i}$, such that if $\bf i$ has type ${\bf W}_s$, then
\begin{align} \label{eq:MarkDistr}
\mathbb{P}_{n} \left( \left. \hat D_{\bf i}  = m \right| {\bf i} \text{ has type } {\bf W}_s \right) = \frac{e^{- {\bar W}_s^+ \Lambda_n^-/(\theta n)} ( \bar{W}_s^+ \Lambda_n^-/(\theta n))^m}{m!}, \qquad m = 0, 1, 2, \dots,
\end{align}
independently of all other nodes. Here and in the sequel, $\Lambda_n^- = \sum_{i=1}^n {\bar W}_i^- $ and $\Lambda_n^+ = \sum_{i=1}^n {\bar W}_i^+$.  We refer to this marked multi-type branching process as a Poisson branching tree (PBT).

As mentioned earlier, it turns out that the PBT we just described can also be thought of as a marked Galton-Watson process.  To see this, note that the properties of the Poisson distribution imply that the type of a node ${\bf i}$ in the tree is independent of the type of its parent, as the following result shows (its proof is given in Section~\ref{SS.CouplingProofs}). 

\begin{lemma} \label{L.Independence}
For any node ${\bf i}$ in the PBT and any $1 \leq r,s \leq n$, we have
$$\mathbb{P}_{n} ( \text{${\bf i}$ has type ${\bf W}_s$} | \text{parent has type ${\bf W}_r$}) = \frac{{\bar W}_s^+}{\Lambda_n^+}.$$
\end{lemma}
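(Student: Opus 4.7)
The plan is to exploit the coloring/thinning property of independent Poisson random variables: although the offspring type counts in \eqref{eq:MultiOffspring} depend on the parent's type ${\bf W}_r$ through the factor $W_r^+$, this dependence is an overall multiplicative scalar that cancels out of the relative frequencies of types.

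First I would identify the total offspring count of a type-${\bf W}_r$ parent. By \eqref{eq:MultiOffspring}, the counts $(Z_{1r}, \dots, Z_{nr})$ are conditionally independent (given $\mathscr{F}$) Poisson random variables with means $q_{jr}^{(n)} = W_r^+ W_j^-/(\theta n)$. Therefore their sum $\hat N = \sum_{j=1}^n Z_{jr}$ is Poisson with mean $W_r^+ L_n^-/(\theta n)$, which happens to match the mark distribution \eqref{eq:MarkDistr}.

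Next I would invoke the Poisson coloring representation: the joint law of $(Z_{1r}, \dots, Z_{nr})$ can be equivalently generated by first sampling $\hat N$ and then independently assigning to each of the $\hat N$ offspring a type ${\bf W}_s$ with probability
$$p_s = \frac{q_{sr}^{(n)}}{\sum_{k=1}^n q_{kr}^{(n)}} = \frac{W_r^+ W_s^-/(\theta n)}{W_r^+ L_n^-/(\theta n)} = \frac{W_s^-}{L_n^-}.$$
The $W_r^+$ factor cancels, so the probability that a given offspring has type ${\bf W}_s$ does not depend on the parent's type at all.

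Finally, since the $\hat N$ offspring are permuted uniformly at random before being assigned the labels $({\bf i}_{\text{parent}}, 1), \dots, ({\bf i}_{\text{parent}}, \hat N)$, exchangeability implies that the type carried by any specific labeled child ${\bf i}$ has exactly the marginal law of a single i.i.d.~draw from $(p_1, \dots, p_n)$. Hence $\mathbb{P}_{\bf W}({\bf i} \text{ has type } {\bf W}_s \mid \text{parent has type } {\bf W}_r) = W_s^-/L_n^-$, independent of $r$. I do not expect a real obstacle: the only subtle bookkeeping is justifying the passage from "uniformly permuted offspring" to "marginal type distribution of a specific labeled child", which is immediate from the exchangeability of the labeling.
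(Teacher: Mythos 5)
Your proposal is correct and rests on the same underlying fact as the paper's proof — the multinomial conditional/coloring property of independent Poisson variables — so the two arguments are essentially equivalent: the paper reduces the probability to $\mathbb{E}_{\bf W}[Z_{sr}/\hat N \mid \hat N \geq 1]$ and evaluates it via the binomial conditional identity $E[X/(X+Y) \mid X+Y \geq 1] = \mu/(\mu+\lambda)$, which is the two-type instance of the coloring theorem you invoke directly. Your framing via thinning plus exchangeability is arguably a touch cleaner (it makes transparent why the conditioning on the parent's offspring count is irrelevant), though note the small aside that $\hat N$ does not literally ``match the mark distribution'' \eqref{eq:MarkDistr} — the mark has mean $W_s^- L_n^+/(\theta n)$ (out-degree) while $\hat N$ has mean $W_r^+ L_n^-/(\theta n)$ (offspring/in-degree) — a harmless inaccuracy that does not affect the argument.
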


This means that we could construct the PBT by assigning to each node {\bf i} in the tree a number of offspring $\hat N_{\bf i}$ and then sampling their types according to Lemma~\ref{L.Independence}, independently of everything else. The marks $\hat D_{({\bf i},j)}$ of each of these offspring would then be sampled according to \eqref{eq:MarkDistr}. Since the type of the root node is chosen uniformly at random from the set $\mathscr{W}_n = \{ {\bf W}_i: 1 \leq i \leq n\}$, the distribution of $\hat N_\emptyset$ may be different from that of all other nodes. This effect will disappear in the limit due to Assumption~\ref{A.ExtendedTypes}(f).
 
We now explain how to construct a coupling of the inhomogeneous random digraph $\mathcal{G}(V_n, E_n)$ and a PBT.  We start by choosing uniformly at random a vertex in the graph, call it $\xi$, and then exploring its in-component using a breadth-first exploration process. The coupled PBT is constructed to be in perfect agreement with the graph exploration process for a number of generations large enough to ensure that the rank of the randomly chosen node can be accurately approximated by its rank computed up to that point.  The exploration process will have even and odd steps: in Step $2k-1$ we will discover the set of vertices that have a directed path of length $k$ to the randomly chosen vertex, in Step $2k$ we will uncover all the outbound neighbors of the vertices discovered in Step $2k-1$. To keep track of this process, each vertex in the graph exploration will be assigned one of three labels: \{active, inactive, dead\}; vertices that have not been uncovered have no label. Active vertices will be those that are currently most distant from the randomly chosen vertex, and all we know about them is that they have an outbound edge connecting them to the in-component of the first vertex.  The vertices that have already been added to the exploration process, and whose inbound neighbors have been discovered, will be labeled dead. Vertices that have been discovered as additional outbound neighbors of active vertices are labeled inactive, and all we know about them is that they have an inbound edge connecting them to a vertex in the in-component we are exploring. Figure~\ref{F.Exploration} illustrates this process. 

\begin{figure}[t]
\centering
\includegraphics[scale = 0.6]{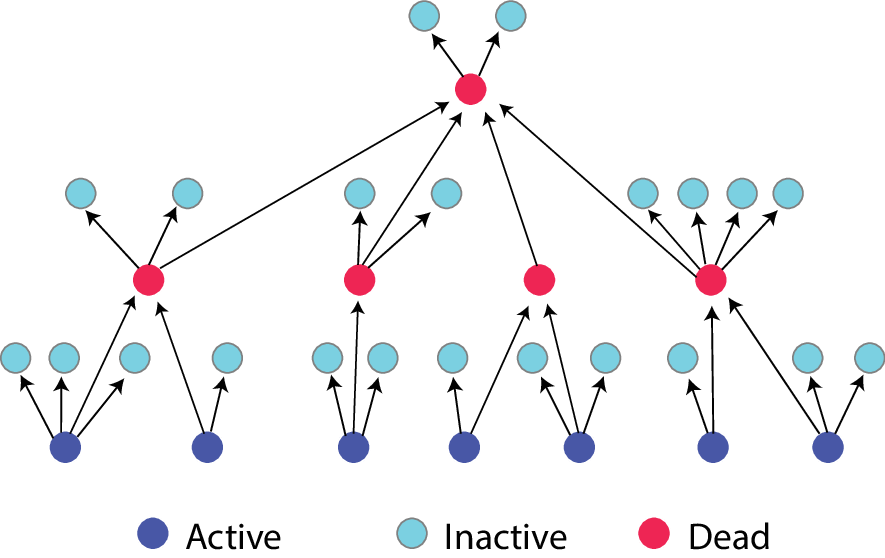}
\caption{Graph exploration process after completing Step 4.} \label{F.Exploration}
\end{figure}

For the coupled PBT we will need to keep track of the active vertices at the end of Step $2k-1$ of the graph exploration process, which will constitute the $k$th generation of nodes/individuals in the tree. We will also keep track of the inactive vertices by defining a similar set composed of all the types sampled during the creation of the marks $\{ \hat D_{\bf i}\}$ (note that in the graph each type appears only once, since each type can be identified with one of the $n$ vertices, while on the tree types can appear repeatedly). The notation below will help us with the construction of the coupling. 

For $k = 1, 2, \dots$, let
\begin{align*}
A_k &= \text{set of ``active'' vertices at the end of Step $2k-1$.} \\
T_k &= \bigcup_{m=0}^{k-1} A_m =  \text{set of ``dead'' vertices after Step $2k-1$}; \, T_0 = \varnothing. \\
I_k &= \text{set of ``inactive'' vertices after Step $2k$.} \\
\hat A_k &= \text{set of nodes in the PBT at distance $k$ from the root}. \\
\hat T_k &= \bigcup_{m=0}^{k-1} \hat A_m =  \text{set of nodes in the PBT at distance at most $k-1$ from the root}; \, \hat T_0 = \varnothing. \\
\hat I_k &= \text{set of ``inactive'' types in the PBT belonging to nodes at distance at most $k$ from } \\
&\hspace{5mm} \text{the root}.
\end{align*}

{\bf Note:} The sets $T_k$ and $\hat T_k$ grow in odd steps of the exploration process, while the sets $I_k$ and $\hat I_k$ do so in even steps. 

We now describe the coupling, for which we will require a sequence $\{ U_{ij}: 1 \leq i, j \leq n\}$ of i.i.d.~Uniform$(0,1)$ random variables that will be the same for the graph exploration process and the construction of the PBT.  To make the role that the choice of the first node plays in the coupling explicit, we state our coupling results in terms of the first node we choose. The coupling has odd steps and even steps, during odd steps we discover new nodes in the inbound component of the first node, in even steps we explore the outbound neighbors (which will become the marks) of the nodes discovered in the previous step. Throughout the paper we will use  $g^{-1}(u) = \inf\{ x\in \mathbb{R}: g(x) \geq u \}$ to denote the generalized inverse of function $g$ and $|A|$ to denote the cardinality of set $A$.

{\em Construction of the graph:}

Step 0: Choose the vertex whose neighborhood will be explored, say vertex $i$. Set $A_0 = \{i\}$ and label vertex $i$ as ``active''. To reveal all its outbound edges realize  $X_{it} = 1(U_{it} > 1-p_{it}^{(n)})$, $t = 1, 2, \dots, n$, $t \neq i$. If $X_{it} = 1$, label node $t$ as ``inactive", so $I_0 = \{ t \in \{1, 2, \dots, n\} \setminus \{i\}: X_{it} = 1 \}$.

In Step $2k-1$, $k \geq 1$, we explore the inbound neighbors of nodes in the set $A_{k-1}$. For each $i \in A_{k-1}$:
\begin{itemize}
\item[1)] For all $j =1, 2, \dots, n$, $j \neq i$ and $j \notin T_{k-1}$:
\begin{itemize}
\item[i.] Realize $X_{ji} = 1(U_{ji} > 1- p_{ji}^{(n)})$. 
\item[ii.] If $X_{ji} = 1$ and node $j$ was previously labeled ``inactive'', relabel it as ``active''.
\end{itemize}
\item[2)] Label $i$ as ``dead".
\end{itemize}

In Step $2k$, $k \geq 1$, we explore the outbound neighbors of all the nodes in the set $A_{k}$. For each $j \in A_{k}$ and all $t = 1, 2, \dots, n$, $t \neq j$ and $t \notin T_k$:
\begin{itemize}
\item[1)] Realize $X_{jt} = 1(U_{jt} > 1- p_{jt}^{(n)})$. 
\item[2)] If $X_{jt} = 1$ label node $t$ as ``inactive'' (if it was already ``active" it will have two labels). 
\end{itemize}
Step $2k$ ends when we have uncovered all the nodes in $A_k$ as well as their outbound neighbors.  

{\em Coupled construction of the PBT:}

To each node ${\bf i}$ in the tree we will also determine its {\em mark} $\hat D_{\bf i}$ (the type of vertex {\bf i} includes the values of $Q_{\bf i}$ and ${\bf \zeta}_{\bf i}$ so we can ignore those in the coupling).  This value $\hat D_{\bf i}$ will be created independently for each node in the PBT according to~\eqref{eq:MarkDistr}, but will be coupled with the creation of ``inactive'' vertices the first time that a type appears. As long as the coupling holds, we choose nodes in the tree in the same order as in the graph,   thus $\hat D_{\bf i}$ represents the out-degree of the corresponding node in the graph.

Step 0: Set $\hat A_0 = \{ \emptyset\}$ and set the root of the PBT, $\emptyset$, to have type ${\bf W}_i$, where $i$ is the vertex chosen in Step 0 of the graph construction. Define $G_{ij}(x) = \sum_{t=0}^{\lfloor x \rfloor} e^{-q_{ij}^{(n)}} (q_{ij}^{(n)})^t /t!$ to be the distribution of $Z_{ij}$, where $Z_{ij}$ has the interpretation of being the number of offspring of type ${\bf W}_i$ that a node of type ${\bf W}_j$ has. Next, realize all the $Z_{it} = G_{it}^{-1}(U_{it})$ for $t = 1, 2, \dots, n$, $t \neq i$, and $Z_{ii}^* \sim$~Poisson$(q_{ii}^{(n)})$, independent of $U_{it}$ and of any other $Z_{it}$, $t\neq i$. Set 
$$\hat D_\emptyset = Z_{ii}^* + \sum_{1 \leq t \leq n, \, t \neq i} Z_{it} $$
and for each $Z_{it} \geq 1$ (or $Z_{ii}^* \geq 1$) add type ${\bf W}_t$ (or ${\bf W}_i$) to $\hat I_0$.

In Step $2k-1$, $k \geq 1$, we identify the individuals, and their types, in the $k$th generation of the PBT. For each node ${\bf i} \in \hat A_{k-1}$:
\begin{itemize}
\item[a)] If node ${\bf i}$ is the first node in the PBT to have  type ${\bf W}_i$ proceed as follows:
\begin{itemize}
\item[1)] For $j = 1, 2, \dots, n$, $j \neq i$:
\begin{itemize}
\item[i.] Realize $Z_{ji} = G_{ji}^{-1}(U_{ji})$. 
\item[ii.] If $Z_{ji} \geq 1$, add $Z_{ji}$ nodes of type ${\bf W}_j$ to the active set.
\end{itemize}
\item[2)] Realize $Z_{ii}^*\sim$Poisson$(q_{ii}^{(n)})$, independently of anything else, and assign a number $Z_{ii}^*$ of type ${\bf W}_i$ offspring. 
\end{itemize}
\item[b)] If node ${\bf i}$ is not the first node in the PBT to have type ${\bf W}_i$, sample a vector $(V_1, V_2, \dots, V_n)$ of i.i.d.~Uniform$(0,1)$ random variables, independent of the sequence $\{ U_{ij}: 1 \leq i,j \leq n\}$, and of any other $V_i$'s sampled before, and assign to node ${\bf i}$ a number $G_{ji}^{-1}(V_j)$ of type ${\bf W}_j $ offspring, for $j = 1, 2, \dots, n$. \end{itemize}
In Step $2k$, $k \geq 1$, we sample the marks of all the nodes in $\hat A_k$. For each node ${\bf i} \in \hat A_k$:
\begin{itemize}
\item[a)] If node ${\bf i}$ is the first node in the PBT to have type ${\bf W}_j$ proceed as follows:
\begin{itemize}
\item[i.] Realize all the $Z_{jt} = G_{jt}^{-1}(U_{jt})$ for $t = 1,2, \dots, n$, and $t \notin T_k$.
\item[ii.] Sample $Z_{jt}^*\sim$Poisson($q_{jt}^{(n)}$) for $t$ already ``dead", independently of everything else.
\item[iii.] Set
$$\hat D_{\bf i}  = \sum_{1 \leq t \leq n, \, t \neq j,i, t \notin T_k} Z_{jt} + Z_{jj}^* + Z_{ji}^* + \sum_{t \in T_k} Z_{jt}^*,$$
and add all the corresponding types (i.e., add type ${\bf W}_t$ if $Z_{jt} \geq 1$ or $Z_{jt}^* \geq 1$) to the ``inactive'' set. 
\end{itemize}
\item[b)] If node ${\bf i}$ is not the first node in the PBT to have type ${\bf W}_j$, sample a vector $(V_1, V_2, \dots, V_n)$ of i.i.d.~Uniform(0,1) random variables, independent of the sequence $\{ U_{ij}: 1 \leq i,j \leq n\}$, and of any other $V_i$'s sampled before, and assign to node ${\bf i}$ a number $G_{ji}^{-1}(V_j)$ of type ${\bf W}_j $ offspring, for $j = 1, 2, \dots, n$. Set $\hat D_{\bf i} = \sum_{t = 1}^n Z_{jt}^*$ and add the corresponding types to the ``inactive" set.
\end{itemize}

\begin{defn} \label{D.CouplingBreaks}
We say that the coupling of the graph and the PBT holds up to Step $2k$ if the graph exploration process up to a distance $k$ from the first (root) vertex is identical to that of the PBT, i.e., $|A_m| = |\hat A_m|$ and $I_m = \hat I_m$ for all $0 \leq m \leq k$.  Let $\tau$ be the step in the graph exploration process during which the coupling breaks.
\end{defn}

Before stating the main result obtained from this step, we need to define:
\begin{equation} \label{eq:DeltaDef}
\Delta_n = d_1( F_{n}, F),
\end{equation}
where $d_1(F,G)$ is the Kantorovich-Rubinstein distance (or Wasserstein distance of order one) between distributions $F$ and $G$. In particular,
$$d_1(F,G) = \inf_{{\bf X} \sim F,{\bf Y} \sim G} E[ \norm{ {\bf X}  - {\bf Y} }_1],$$
where the infimum is taken over all possible couplings of ${\bf X}$ and ${\bf Y}$, where ${\bf X}$ has distribution $F$ and ${\bf Y}$ has distribution $G$. Since convergence in $d_1$ is equivalent to weak convergence and convergence of the first absolute moments (see Theorem~6.9 and Definition~6.8(i) in \cite{Villani_2009}), Assumption~\ref{A.Types} (a)-(b) implies that 
$$\Delta_n \xrightarrow{P} 0  \qquad \text{as } n \to \infty.$$
Note that $\Delta_n$ is measuring the distance between the empirical distribution of the extended types and its limiting distribution, while $\mathcal{E}_n$ is measuring the error between the edge probabilities in the graph and their asymptotic equivalents.

Let $\mathbb{P}_{n,i}(\cdot) = \mathbb{P}_n(\cdot | A_0 = \{ i\})$ or $\mathbb{P}_{n,i}(\cdot ) = \mathbb{P}_n(\cdot | \emptyset \text{ has type ${\bf W}_i$})$, depending on whether the event refers to the graph or to the PBT, respectively.  The main result obtained from this step is given below, and its proof is given in Section~\ref{SS.CouplingProofs}. The theorem provides an explicit upper bound for the probability that the coupling breaks before step $2k$; we will later choose the sequences $a_n, b_n, c_n, s_n$ in such a way that the bound converges to zero as $n \to \infty$.

\begin{theo} \label{T.CouplingBreaks}
Fix $a_n, b_n, c_n, s_n \geq 1$ such that $s_n \leq a_n \wedge b_n$. Then, for any $k \in \mathbb{N}_+$, 
\begin{align*}
\mathbb{P}_{n,i} (\tau \leq 2k) &\leq  1(W_i^+ > a_n) + \mathcal{P}_n^+(i) +  (\mathcal{H}_n/\theta) \bar W_i^+ (\Delta_n + g_-(b_n) + a_n b_n/n)  \\
&\hspace{5mm} +  \mathcal{H}_n  k s_n \left(  g_+(c_n) + c_n \left( \mathcal{E}_n + \Delta_n + g_+(a_n) + g_-(b_n) + a_n b_n/n \right)  \right)  \\
&\hspace{5mm} + \mathbb{P}_{n,i}( |\hat T_{k+1}| \vee |\hat I_k| > s_n),
\end{align*}
where $\mathcal{P}_n^+(i) = \sum_{1 \leq j \leq n, j \neq i} |p_{ij}^{(n)} -(r_{ij}^{(n)} \wedge 1)| $, $g_-(x) = E[(W^- - x)^+]$, $g_+(x) =E[(W^+ - x)^+]$, and 
$$\mathcal{H}_n = \frac{2 n}{\Lambda_n^+}  (1+ \Delta_n/\theta)^2(3 + a_n^{-1} + b_n^{-1} + c_n^{-1} ).$$ 
\end{theo}

\subsubsection{Convergence to the attracting endogenous solution}
 
In view of Theorem~\ref{T.CouplingBreaks}, computing $R_\xi^{(n,k)}$ requires us to analyze only the first $k$ generations of the PBT, provided $\tau > 2k$. In order to do so we first explain how to use the marks $\{ \hat D_{\bf i}\}$ to compute the generalized PageRank of the root node of the PBT. For each node ${\bf i}$ in the PBT having type ${\bf W}_s$, we define its weight and personalization value according to
$$\hat C_{\bf i} = \frac{\zeta_s}{\hat D_{\bf i} + 1} \qquad \text{and} \qquad \hat Q_{\bf i} = Q_s.$$
Using the tree-indexing notation introduced in Section~\ref{SS.Coupling}, we iteratively compute the rank of the root node of the PBT, denoted $\hat R_\emptyset^{(n,k)}$, according to
\begin{equation} \label{eq:PRrecursion}
\hat R_{\bf i}^{(n,k)} = \sum_{j=1}^{\hat N_{\bf i}} \hat C_{({\bf i},j)} \hat R_{({\bf i},j)}^{(n,k-1)} + \hat Q_{\bf i}, \qquad k \geq 1, \qquad \hat R_{\bf j}^{(n,0)} = 0,
\end{equation}
where $\hat N_{\bf i}$ is the total number of offspring that node ${\bf i}$ has. In view of Lemma~\ref{L.Independence} and the observation that the type of the root node will be chosen uniformly at random, we have that the distribution of $(\hat N_\emptyset, \hat Q_\emptyset)$ is given by
\begin{equation} \label{eq:RootVector}
\mathbb{P}_{n}\left( \hat N_\emptyset = m, \, \hat Q_\emptyset = q \right) = \sum_{s=1}^n 1(Q_s = q) \cdot \frac{e^{-\frac{\Lambda_n^+}{\theta n} {\bar W}_s^- } (\Lambda_n^+ {\bar W}_s^-/(\theta n))^m}{m!} \cdot \frac{1}{n},
\end{equation}
for $m \in \mathbb{N}$ and $q \in \mathbb{R}$. Moreover, for any node ${\bf i} \neq \emptyset$, we have that
\begin{align}
&\mathbb{P}_{n}\left( \hat N_{\bf i} = m, \, \hat Q_{\bf i} = q, \, \hat C_{\bf i} = t \right) \notag \\
&= \sum_{s=1}^n \mathbb{P}_{n}\left( \left. \hat N_{\bf i} = m, \, \hat Q_{\bf i} = q, \, \hat C_{\bf i} = t \right| {\bf i} \text{ has type } {\bf W}_s \right) \frac{{\bar W}_s^+}{\Lambda_n^+}  \notag \\
&= \sum_{s=1}^n 1(Q_s = q) \cdot \frac{e^{-\frac{\Lambda_n^+}{\theta n} {\bar W}_s^-} (\Lambda_n^+ {\bar W}_s^-/(\theta n))^m}{m!} \cdot \mathbb{P}_{n} (\zeta_s/(\hat D_{\bf i} +1) =t | {\bf i} \text{ has type ${\bf W}_s$}) \cdot \frac{{\bar W}_s^+}{\Lambda_n^+} \notag \\
&= \sum_{s=1}^n 1(Q_s = q, \zeta_s/t -1 \in \mathbb{N}) \cdot p(m; \Lambda_n^+ {\bar W}_s^-/(\theta n)) \cdot p(\zeta_s/t-1; \Lambda_n^- {\bar W}_s^+/(\theta n))\cdot \frac{{\bar W}_s^+ }{\Lambda_n^+}, \label{eq:TypicalVector}
\end{align}
for $m \in \mathbb{N}$ and $t,q \in \mathbb{R}$, where $p(m; \lambda) = e^{-\lambda} \lambda^m/m!$. Note that the independence of the edges implies that the sequence $\{(\hat N_{\bf i}, \hat Q_{\bf i}, \hat C_{\bf i}): {\bf i} \in \mathcal{U} \}$ consists of conditionally independent vectors given $\mathscr{F}_n$.  

Now that we have explained how to compute generalized PageRank on the PBT, we obtain, as a consequence of  Theorem~\ref{T.CouplingBreaks}, the following result for $R_\xi^{(n,k)}$; its proof is given in Section~\ref{SS.CouplingCorollary}. 

\begin{theo} \label{T.CouplingFinal}
Let $\xi$ be the index of a uniformly chosen vertex in $\mathcal{G}(V_n, E_n)$. Under Assumption~\ref{A.ExtendedTypes} (a)-(c) we have that for any fixed $k \in \mathbb{N}_+$,
\begin{align*}
\mathbb{P}_n\left( R_\xi^{(n,k)} \neq \hat R_\emptyset^{(n,k)}  \right) &\leq  \frac{1}{n} \sum_{i=1}^n \mathbb{P}_{n,i}(\tau \leq 2k)  \xrightarrow{P} 0,
\end{align*}
as $n \to \infty$.
\end{theo}

To make the connection with the SFPE, note that since we assume that $(W^-, Q)$ is independent of $(W^+, \zeta)$, the vectors $\{ (\hat N_{\bf i}, \hat Q_{\bf i}, \{ \hat C_{({\bf i},j)} \}_{j \geq 1} ): {\bf i} \in \mathcal{U} \}$ will be asymptotically independent, and therefore can be used to define a weighted branching process (WBP) with generic branching vector $(\mathcal{N}, \mathcal{Q}, \{ \mathcal{C}_j\}_{j \geq 1})$, where the latter is the distributional limit of $(\hat N_{\bf i}, \hat Q_{\bf i}, \{ \hat C_{({\bf i},j)} \}_{j \geq 1} )$, ${\bf i} \neq \emptyset$. Moreover, the $\{ \mathcal{C}_j\}_{j \geq 1}$ will be i.i.d.~and independent of $(\mathcal{N}, \mathcal{Q})$. We refer the reader to \cite{Jel_Olv_10, chenetal} for more details on the description and basic properties of WBPs of this form.  The proof of this convergence in the Kantorovich-Rubinstein metric (see, e.g., Chapter 6 in \cite{Villani_2009}) is given in Section~\ref{S.WBPconvergenceProofs}. Once this convergence is established, the convergence of $\hat R_\emptyset^{(n,k)}$ will follow from Theorem~2 in \cite{chenolvera1}. The precise statement of this last step in the proof of Theorem~\ref{T.MainPageRank} is given below.

\begin{theo} \label{T.SFPEConvergence}
Under Assumption~\ref{A.ExtendedTypes} (a)-(b) \& (d)-(f), we have that for any fixed $k \in \mathbb{N}_+$  the rank of the root node in the PBT computed up to generation $k$ satisfies
$$\hat R_\emptyset^{(n,k)} \Rightarrow \mathcal{R}^{(k)} \qquad \text{and} \qquad  \mathbb{E}_{n}\left[ |\hat R_\emptyset^{(n,k)}| \right] \stackrel{P}{\longrightarrow} E[|\mathcal{R}^{(k)}|] , \qquad n \to \infty,$$
where $\mathcal{R}^{(k)} \to \mathcal{R}$ a.s.~as $k \to \infty$, with $\mathcal{R}$ defined as in Theorem~\ref{T.MainPageRank}. 
\end{theo}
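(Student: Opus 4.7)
The proof of Theorem~\ref{T.SFPEConvergence} proceeds in two stages: first, identify the limit and establish convergence of the PBT branching vectors in the Kantorovich--Rubinstein metric (in the sense of Chapter~6 of \cite{Villani_2009}), and second, invoke Theorem~2 of \cite{chenolvera1} to lift this to convergence of the rank of the root. The strict contraction $|\zeta| \leq c < 1$ of Assumption~\ref{A.TypesAssum}(a) is what makes the latter result applicable, and simultaneously yields the uniform bound $\mathbb{E}_{\bf W}[|\hat R_\emptyset^{(n,k)}|] \leq E[|Q|]/(1-c)$, which together with an almost sure law of large numbers will deliver the claimed convergence of first absolute moments.

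For the identification step, I would start from the explicit conditional laws of $(\hat N_{\bf i}, \hat Q_{\bf i}, \hat C_{\bf i})$ displayed just before Corollary~\ref{C.CouplingFinal}, combined with Lemma~\ref{L.Independence} and the strong law $L_n^{\pm}/n \to E[W^{\pm}]$ a.s.\ applied to the i.i.d.~type sequence. A direct calculation then shows that for any non-root node the triple $(\hat N_{\bf i}, \hat Q_{\bf i}, \hat C_{\bf i})$ converges in distribution to the generic vector $(\mathcal{N}, \mathcal{Q}, \mathcal{C})$ described in Theorem~\ref{T.MainPageRank}. The key role of the independence $(W^+, Q) \perp (W^-, \zeta)$ from Assumption~\ref{A.TypesAssum}(a) is that size-biasing by $W^-$ leaves the marginal law of $(W^+, Q)$ untouched, so the root and non-root branching vectors share the same limit, the pair $(\mathcal{N}, \mathcal{Q})$ ends up independent of $\{\mathcal{C}_j\}_{j\geq 1}$, and the $\mathcal{C}_j$'s become i.i.d., precisely as required by the SFPE~\eqref{eq:SFPE}.

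To upgrade pointwise convergence in law to Kantorovich--Rubinstein convergence on a suitable weighted product space, I would use a quantile coupling between the Poisson$(L_n^\pm W_s^\pm/(\theta n))$ variables governing $\hat N_{\bf i}$ and the marks $\mathcal{D}_{\bf i}$ and their limits Poisson$(E[W^\pm] W_s^\pm/\theta)$. Uniform $(1+\delta)$-moment control of $\hat N_{\bf i}$ is provided by $E[(W^+)^{1+\delta}] < \infty$ together with uniform integrability of $L_n^-/n$; the weights $\hat C_{\bf i}$ are uniformly bounded by $c$; and $E[|\hat Q_{\bf i}|] \leq E[|Q|] < \infty$ by assumption. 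The entire branching vector, which carries an infinite sequence of weights, is handled by metrizing the coordinate $\{\hat C_{({\bf i},j)}\}_{j \geq 1}$ via a summable weighting scheme and appealing to dominated convergence.

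Finally, Theorem~2 of \cite{chenolvera1}---designed for WBPs with a uniform contraction---takes such Kantorovich--Rubinstein convergence of branching vectors to i.i.d.~limits and returns $\hat R_\emptyset^{(n,k_n)} \Rightarrow \mathcal{R}$ for any $k_n \to \infty$; the a.s.~convergence of $\mathbb{E}_{\bf W}[|\hat R_\emptyset^{(n,k_n)}|]$ to $E[|\mathcal{R}|]$ then follows from the uniform $L^1$ bound and dominated convergence. The main obstacle is the third stage: the PBT branching vectors are only conditionally independent given $\mathscr{F}$, sharing $L_n^\pm$ and drawing their types from the common finite pool $\mathscr{W}_n$, so obtaining a genuinely i.i.d.~limit requires the LLN-driven coupling to work uniformly across roughly $\mu_n^{k_n}$ nodes. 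The independence $(W^+, Q) \perp (W^-, \zeta)$ is precisely what prevents the size-biasing across generations from producing a distributional mismatch between the root and its descendants that would otherwise spoil this uniformity.
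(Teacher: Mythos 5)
Your overall two-stage architecture is the same as the paper's: establish Kantorovich--Rubinstein convergence of the conditional branching-vector laws (conditional on $\mathscr{F}$), then invoke Theorem~2 of \cite{chenolvera1}. Your identification of the role of $|\zeta|\leq c<1$ and of the independence $(W^+,Q)\perp(W^-,\zeta)$---so that size-biasing by $W^-$ leaves the $(\mathcal{N},\mathcal{Q})$ marginal intact and the $\mathcal{C}_j$'s become i.i.d.~independent of $(\mathcal{N},\mathcal{Q})$---matches the paper. Your mechanism for the Poisson approximation differs: you propose a quantile coupling of Poisson$(L_n^\pm W_s^\pm/(\theta n))$ to Poisson$(E[W^\pm]W_s^\pm/\theta)$, whereas the paper constructs an intermediate vector $(\tilde N_{\bf i},\tilde Q_{\bf i},\tilde C_{\bf i})$ and uses an explicit total-variation estimate between two Poisson laws (Lemma~\ref{L.TwoPoisson}) together with a tailored SLLN lemma (Lemma~\ref{L.IndicatorSLLN}); both routes are viable, but the paper's gives the quantitative bounds needed cleanly.

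Two points are off. First, the moment convergence $\mathbb{E}_{\bf W}[|\hat R_\emptyset^{(n,k_n)}|]\to E[|\mathcal{R}|]$ does \emph{not} follow from ``the uniform $L^1$ bound and dominated convergence'': weak convergence plus a uniform bound on first moments does not yield convergence of the means---one needs uniform integrability, which is not immediately supplied by the geometric bound $\mathbb{E}_{\bf W}[|\hat R_\emptyset^{(n,k)}|]\leq E[|Q|]/(1-c)$. The paper avoids this by noting that Theorem~2 of \cite{chenolvera1} actually delivers $d_1$-convergence of $\hat R_\emptyset^{(n,k_n)}$ to $\mathcal{R}$, and by Theorem~6.9 of \cite{Villani_2009}, $d_1$-convergence is equivalent to weak convergence plus convergence of first absolute moments; the moment convergence is therefore immediate. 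Second, your framing of the ``main obstacle''---that the conditional (rather than unconditional) independence of the branching vectors forces the coupling to work uniformly across roughly $\mu_n^{k_n}$ nodes---is not accurate: Theorem~2 of \cite{chenolvera1} is stated precisely for branching vectors that are conditionally i.i.d.~given $\mathscr{F}$, and it converts $d_1$-convergence of the single common conditional law $G_n$ (plus that of the root's law $G_n^*$) into $d_1$-convergence of the root's rank; no additional per-node uniformity argument is needed. The genuine technical work is instead confined to Theorem~\ref{T.KRconvergence}, i.e.\ to proving the a.s.~$d_1$-convergence $d_1(G_n,G)+d_1(G_n^*,G^*)\to 0$, which is where Lemma~\ref{L.TwoPoisson} and the moment hypotheses earn their keep.
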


The proof of Theorem~\ref{T.MainPageRank} is obtained by combining \eqref{eq:Local}, Theorem~\ref{T.CouplingFinal}, and Theorem~\ref{T.SFPEConvergence}. All the proofs are given in Section~\ref{S.Proofs}.

\section{Proofs} \label{S.Proofs}

This section includes the proofs of Theorem~\ref{T.MixedPoisson}, Lemma~\ref{L.Independence}, Theorem~\ref{T.CouplingBreaks}, Theorem~\ref{T.CouplingFinal}, Theorem~\ref{T.SFPEConvergence}, and ends with the proof of Theorem~\ref{T.MainPageRank}. Since some of the proofs are rather technical and require some preliminary results, we have organized them in subsections.  We start with the proof of Theorem~\ref{T.CouplingBreaks} followed by that of Theorem~\ref{T.CouplingFinal}, since their proofs can be used to give a short proof of Theorem~\ref{T.MixedPoisson}.

\subsection{Proofs of Lemma~\ref{L.Independence} and Theorem~\ref{T.CouplingBreaks}} \label{SS.CouplingProofs} 

The proof of Theorem~\ref{T.CouplingBreaks} is rather long, so we split some of the technical steps into three preliminary results to ease its reading.  We point out that all of the results in this section are proven conditionally on the type sequence $\mathscr{W}_n = \{ {\bf W}_i: 1 \leq i \leq n\}$, and therefore, all the expectations that appear throughout the section are finite. In some of our results related to the coupling, we use the notation $\mathbb{P}_{n,i}(\cdot ) = \mathbb{P}_n(\cdot | A_0 = \{ i\})$ or $\mathbb{P}_{n,i}(\cdot ) = \mathbb{P}_n(\cdot | \emptyset \text{ has type ${\bf W}_i$})$, depending on whether the event occurs on the graph or on the PBT, respectively. Similarly, we use $\mathbb{E}_{n,i}[ \cdot ] = \mathbb{E}_{n}[ \cdot | A_0 = \{i \}]$ or $\mathbb{E}_{n,i}[ \cdot ] = \mathbb{E}_{n}[ \cdot | \emptyset \text{ has type } {\bf W}_i]$ to denote the corresponding conditional expectations. 

\begin{proof}[Proof of Lemma~\ref{L.Independence}]
We start by noting that
\begin{align*}
&\mathbb{P}_{n} ( \text{${\bf i}$ has type ${\bf W}_s$} | \text{parent has type ${\bf W}_r$}) \\
&= \mathbb{E}_{n} \left[ \left. \frac{Z_{sr}}{ Z_{1r} + \dots + Z_{nr} } \right|   Z_{1r} + \dots + Z_{nr} \geq 1 \right] ,
\end{align*}
where the $\{Z_{jr}: 1 \leq j \leq n\}$ are independent Poisson random variables with $\mathbb{E}_{n}[ Z_{jr}] = q_{jr}^{(n)}$. Since for two independent Poisson random variables $X$ and $Y$ with means $\mu$ and $\lambda$, respectively, we have that $X | X+Y = n$ has a Binomial$(n, \mu/(\mu+\lambda))$ distribution, then
\begin{align*}
E\left[ \left. \frac{X}{X+Y}  \right| X+Y \geq 1 \right] &= \frac{1}{P(X+Y \geq 1)} E\left[ \frac{X}{X+Y} \cdot 1(X+Y \geq 1) \right] \\
&= \frac{1}{P(X+Y \geq 1)} \sum_{n=1}^\infty \frac{1}{n} E[X | X+Y = n] \cdot \frac{e^{-\mu-\lambda} (\mu+\lambda)^n}{n!} \\
&= \frac{\mu}{(\mu+\lambda) P(X+Y \geq 1)} \sum_{n=1}^\infty \frac{e^{-\mu-\lambda} (\mu+\lambda)^n}{n!} = \frac{\mu}{\mu+\lambda}.
\end{align*}
It follows that
\begin{align*}
\mathbb{P}_{n} ( \text{${\bf i}$ has type ${\bf W}_s$} | \text{parent has type ${\bf W}_r$})  &= \frac{q_{sr}^{(n)}}{q_{1r}^{(n)} + \dots + q_{nr}^{(n)}} = \frac{{\bar W}_s^+}{\Lambda_n^+}. 
\end{align*}
\end{proof}

The proof of Theorem~\ref{T.CouplingBreaks} is divided into two parts, one that computes the probability that the coupling breaks in Step 0 and another that computes the probability that it breaks in Step $m$, $m \geq 1$. In both cases, the idea behind the proofs is to identify the possible ways in which the coupling can break in Step $m$, and carefully estimate their corresponding probabilities. To help explain the steps in the proofs that follow, it may be helpful to list the events that can lead to the coupling breaking in Step $m$. 

\begin{remark} \label{R.BreakCouplingReasons}
The coupling breaks at time $\tau$ for the following reasons:
\begin{itemize}
\item If $A_0 = \{i\}$, then $\tau = 0$ if: $Z_{it} \neq X_{it}$ for any $t = 1,2, \dots, n$, $t \neq i$, or $Z_{ii} \geq 1$.
\item $\tau = 2m-1$, $m \geq 1$, if for some $i \in A_{m-1}$ any of the following happen:
\begin{itemize}
\item[a.] $X_{ji} = 1$ for any $j \in I_{m-1}$ or any $j$ in the current active set when $i$ is explored (in which case a cycle is created).
\item[b.]  $Z_{ji} \geq 1$ for $j = i$ or $j \in I_{m-1}$.
\item[c.] $X_{ji} \neq Z_{ji}$ for some $j = 1, 2, \dots, n$, $j \neq i$, $j \notin I_{m-1}$. 
\end{itemize}
\item $\tau = 2m$, $m \geq 1$, if for some $j \in A_m$ either:
\begin{itemize}
\item[d.] $X_{jt} \neq Z_{jt}$ for some $t = 1, 2, \dots, n$, $t \neq j, i$, $t \notin T_m$.
\item[e.] $Z_{jt}^* \geq 1$ for $t = i$ or $t \in T_m$. 
\end{itemize}
\end{itemize}
\end{remark}

A first step in the derivation of the bounds we seek is the following preliminary result bounding the probabilities of having edge discrepancies between the exploration of the graph and of the coupled PBT, both inbound and outbound. Recall that $\Delta_n = d_1(F_n, F)$ was defined in \eqref{eq:DeltaDef}. 

\begin{lemma} \label{L.EdgeDiscrepancy}
For any $1 \leq i \leq n$ we have
\begin{align*}
 \mathbb{P}_n\left( \max_{1 \leq j \leq n, j \neq i} |X_{ji} - Z_{ji}| \geq 1 \right) &\leq \min\left\{ 1, 1(W_i^- > b_n) + \mathcal{P}_n^-(i) + \bar W_i^- \eta_n^- \right\}, \\
\mathbb{P}_n\left( \max_{1 \leq j \leq n, j \neq i} |X_{ij} - Z_{ij}| \geq 1 \right) &\leq \min\left\{ 1, 1(W_i^+ > a_n) + \mathcal{P}_n^+(i) +  \bar W_i^+ \eta_n^+\right\},
\end{align*}
where 
$$\mathcal{P}_n^-(i) =  \sum_{1 \leq j \leq n, j \neq i} \left| p_{ji}^{(n)} - (r_{ji}^{(n)} \wedge 1) \right| , \qquad \mathcal{P}_n^+(i) =  \sum_{1 \leq j \leq n, j \neq i} \left| p_{ij}^{(n)} - (r_{ij}^{(n)} \wedge 1) \right| , $$
$$\eta_n^- = ( \Delta_n + g_+(a_n) +  a_n b_n/n + a_n b_n \Delta_n/(\theta n))/\theta, \qquad \eta_n^+ = ( \Delta_n + g_-(b_n) + a_n b_n/n + a_n b_n \Delta_n/(\theta n))/\theta, $$
$$g_-(x) = E[(W^- - x)^+], \qquad \text{and} \qquad g_+(x) = E[(W^+ - x)^+].$$ 
\end{lemma}

\begin{proof}
The analysis of the two probabilities is essentially the same, so we only prove the result for outbound edges. Let $R_{ij} = 1(U_{ij} > 1- r_{ij}^{(n)})$ with $r_{ij}^{(n)} = W_i^+ W_j^-/(\theta n)$. The union bound gives:
\begin{align*}
\mathbb{P}_n\left( \max_{1 \leq j \leq n, j \neq i} |X_{ij} - Z_{ij}| \geq 1 \right) &\leq  1(W_i^+ > a_n) + 1(W_i^+ \leq a_n) \sum_{1 \leq j \leq n, j \neq i} \mathbb{P}_n( |X_{ij} - Z_{ij}| \geq 1).
\end{align*}
Now note that
\begin{align*}
\mathbb{P}_n( |X_{ij} - Z_{ij}| \geq 1) &= \mathbb{P}_n( |X_{ij} - Z_{ij}| \geq 1, |X_{ij} - R_{ij}| \geq 1) + \mathbb{P}_n( |X_{ij} - Z_{ij}| \geq 1, |X_{ij} - R_{ij}| = 0) \\
&\leq  \mathbb{P}_n( |X_{ij} - R_{ij}| \geq 1) + \mathbb{P}_n( |R_{ij} - Z_{ij}| \geq 1). 
\end{align*}
The first probability can be computed to be:
\begin{align*}
\mathbb{P}_n( |X_{ij} - R_{ij}| \geq 1) &= | p_{ij}^{(n)} - (r_{ij}^{(n)} \wedge 1)|. 
\end{align*}

To analyze each of probabilities involving $R_{ij}$ and $Z_{ij}$, note that 
\begin{align*}
\mathbb{P}_{n} \left( |R_{ij} - Z_{ij}| \geq 1 \right) &= \mathbb{P}_n(R_{ij} = 0, Z_{ij} \geq 1) + \mathbb{P}_n(R_{ij} = 1, Z_{ij} = 0) + \mathbb{P}_n(R_{ij} = 1, Z_{ij} \geq 2) \notag \\
&= \left( 1- (1 \wedge r_{ij}^{(n)}) - e^{-q_{ij}^{(n)}} \right)^+ +  \left( e^{-q_{ij}^{(n)}}  - 1 + (1 \wedge r_{ij}^{(n)})  \right)^+ \notag \\
&\hspace{5mm} + \min\left\{ 1 - e^{-q_{ij}^{(n)}} (1 + q_{ij}^{(n)}) , \, (1 \wedge r_{ij}^{(n)} )  \right\}  \notag\\
&= \left| 1- (1 \wedge r_{ij}^{(n)})  - e^{-q_{ij}^{(n)}} \right| + \min\left\{ (1 \wedge r_{ij}^{(n)}), \, e^{-q_{ij}^{(n)}} ( e^{q_{ij}^{(n)}} - 1 - q_{ij}^{(n)} ) \right\} .
\end{align*}
Now use the inequalities $e^{-x} \geq 1-x$, $e^{-x} - 1 + x \leq x^2/2$ and $e^x - 1 - x \leq x^2 e^x/2$ for $x \geq 0$, to obtain that
\begin{align*}
\mathbb{P}_{n} \left( |R_{ij} - Z_{ij}| \geq 1 \right) &\leq   r_{ij}^{(n)} - q_{ij}^{(n)} + \left| 1 - q_{ij}^{(n)} - e^{-q_{ij}^{(n)}} \right| + e^{-q_{ij}^{(n)}} ( e^{q_{ij}^{(n)}} - 1 - q_{ij}^{(n)} ) \\
&= r_{ij}^{(n)} - q_{ij}^{(n)} + e^{-q_{ij}^{(n)}} - 1 + q_{ij}^{(n)} +  e^{-q_{ij}^{(n)}} ( e^{q_{ij}^{(n)}} - 1 - q_{ij}^{(n)} ) \\
&\leq  r_{ij}^{(n)} - q_{ij}^{(n)} + (q_{ij}^{(n)})^2.
\end{align*}

It follows that
\begin{align*}
&1(W_i^+ \leq a_n) \sum_{1 \leq j \leq n, j \neq i} \mathbb{P}_n( |X_{ij} - Z_{ij}| \geq 1) \\
&\leq 1(W_i^+ \leq a_n)  \sum_{1 \leq j \leq n, j \neq i} \left( | p_{ij}^{(n)} - (r_{ij}^{(n)} \wedge 1)| + r_{ij}^{(n)} - q_{ij}^{(n)} + (q_{ij}^{(n)})^2   \right)  \\
&\leq \mathcal{P}_n^+(i) + \sum_{1 \leq j \leq n, j \neq i} \frac{\bar W_i^+ (W_j^- - {\bar W}_j^-)}{\theta n}  + \frac{({\bar W}_i^+)^2}{(\theta n)^2} \sum_{1 \leq j \leq n, j \neq i} ({\bar W}_j^-)^2 \\
&\leq \mathcal{P}_n^+(i) + \frac{\bar W_i^+}{\theta n} \sum_{j=1}^n (W_j^- - b_n)^+  + \frac{({\bar W}_i^+)^2 b_n \Lambda_n^-}{(\theta n)^2}.
\end{align*}
To further bound the second term note that if we let $(W^{(-,n)}, W^{(+,n)})$ denote a random vector distributed according to $F_{n}$ and $(W^-, W^+)$ a random vector distributed according to $F$, then
$$\frac{1}{n} \sum_{j=1}^n (W_j^- - b_n)^+ = \mathbb{E}_n\left[ (W^{(-,n)} - b_n)^+ \right] \leq d_1(F_{n}, F) + E\left[ (W^- - b_n)^+ \right] = \Delta_n + g_-(b_n).$$
And for the last term, 
$$\frac{({\bar W}_i^+)^2 b_n \Lambda_n^-}{(\theta n)^2} \leq \frac{\bar W_i^+ a_n b_n}{\theta^2 n} \cdot \mathbb{E}_n\left[ W^{(-,n)}  \right]  \leq \frac{\bar W_i^+ a_n b_n}{\theta^2 n} \left( \Delta_n + E[W^-] \right) .$$

We conclude that for $\eta_n^+$ as defined in the statement of the lemma, 
\begin{align*}
1(W_i^+ \leq a_n) \sum_{1 \leq j \leq n, j \neq i} \mathbb{P}_n( |X_{ij} - Z_{ij}| \geq 1) &\leq  \mathcal{P}_n^+(i) + \frac{\bar W_i^+}{\theta} ( \Delta_n + g_-(b_n)) + \frac{\bar W_i^+ a_n b_n}{\theta^2 n} (\Delta_n + E[W^-]) \\
&\leq \mathcal{P}_n^+(i) + \eta_n^+ \bar W_i^+,
\end{align*}
which in turn yields
\begin{align*}
\mathbb{P}_n\left( \max_{1 \leq j \leq n, j \neq i} |X_{ij} - Z_{ij}| \geq 1 \right) &\leq \min\left\{ 1, 1(W_i^+ > a_n) + \mathcal{P}_n^+(i) + \eta_n^+ \bar W_i^+ \right\}  .
\end{align*}
\end{proof}

We now give an upper bound for the probability that the coupling breaks on Step 0 when the starting vertex is $i$.

\begin{lemma} \label{L.TauZero}
We have
$$ \mathbb{P}_{n,i}(\tau = 0) \leq 1(W_i^+ > a_n)  +  \mathcal{P}_n^+(i) + \bar W_i^+ (\eta_n^+ + b_n/(\theta n)).$$
\end{lemma}

\begin{proof}
By the union bound followed by Lemma~\ref{L.EdgeDiscrepancy} we have,
\begin{align*}
\mathbb{P}_{n,i}(\tau = 0) &\leq \mathbb{P}_{n} \left( \max_{1 \leq t \leq n, t \neq i} |X_{it} - Z_{it}| > 0 \right) + \mathbb{P}_{n} \left( Z_{ii}^* \geq 1 \right) \\
&\leq 1(W_i^+ > a_n) + \mathcal{P}_n^+(i) + \bar W_i^+ \eta_n^+ + 1- e^{-q_{ii}^{(n)}}  \\
&\leq  1(W_i^+ > a_n)  +  \mathcal{P}_n^+(i) + \bar W_i^+ \eta_n^+ + q_{ii}^{(n)} \\
&\leq  1(W_i^+ > a_n)  +  \mathcal{P}_n^+(i) + \bar W_i^+ (\eta_n^+ + b_n/(\theta n)).
\end{align*}
\end{proof}

We now give an upper bound for the probability that the coupling breaks in Step $m$ for $m \geq 1$.

\begin{prop} \label{P.CouplingBreaksAtStepM}
Fix $c_n, s_n \geq 1$ with $s_n \leq a_n \wedge b_n$ and define for $m \geq 0$ the event $M_m =  \left\{ |\hat T_{m+1}| \vee |\hat I_m| \leq s_n \right\}$. Then,
for any $m \geq 1$, 
\begin{align*}
\mathbb{P}_{n,i}(\tau = 2m-1, M_{m})  &\leq  \frac{s_n}{\Lambda_n^+/n } \left( g_+(c_n) + \Delta_n + c_n \mathcal{E}_n + c_n \gamma_n^-    \right), \\
\mathbb{P}_{n,i}(\tau = 2m, M_m) &\leq \frac{s_n}{\Lambda_n^+/n} \left( g_+(c_n) + \Delta_n + c_n \mathcal{E}_n +  c_n \gamma_n^+    \right),
\end{align*}
where $g_+$ and $g_-$ are defined as in Lemma~\ref{L.EdgeDiscrepancy},
$$\mathcal{E}_n =  \frac{1}{n} \sum_{i=1}^n \sum_{1 \leq j \leq n, j \neq i} |p_{ij}^{(n)} - (r_{ij}^{(n)} \wedge 1)|,$$
and 
\begin{align*}
\gamma_n^- &= \frac{(E[W^-] + \Delta_n)}{\theta} (\Delta_n + g_+(a_n) +  3a_n b_n/n + a_n b_n \Delta_n/(\theta n) + a_n/n) , \\
\gamma_n^+ &= \frac{(E[W^+] + \Delta_n)}{\theta} (\Delta_n + g_-(b_n) + 2a_n b_n/n + a_n b_n \Delta_n/(\theta n)+ b_n/n).
\end{align*}
\end{prop}

\begin{proof}
We start by defining the following events:
\begin{align*}
F_{i}(I) &=  \left\{  \max_{1 \leq j \leq n, j \neq i} |X_{ji} - Z_{ji}|= 0, \, Z_{ii}+ \sum_{1 \leq j \leq n, \, j \neq i, \, j \in I}  Z_{ji}  = 0  \right\}, \\
G_{j}(D) &= \left\{ \max_{1 \leq t \leq n, t \neq j, \, t \in D^c} | X_{jt} - Z_{jt}| = 0, \, Z_{jj}^* + \sum_{1 \leq t \leq n, \, t\neq j, \, t \in D} Z_{jt}^* = 0 \right\}, \\
\mathbb{A}_i &= \{ 1 \leq j \leq n: j \text{ is active when the inbound neighbors of $i$ are explored} \} \\
H_k &= \bigcap_{i \in A_{k-1}} F_i(I_{k-1} \cup \mathbb{A}_i ), \\
J_k &= \bigcap_{j \in A_k} G_j(T_{k}). 
\end{align*}
Now note that for any $m \geq 1$, 
\begin{align*}
\mathbb{P}_{n,i}\left(\tau = 2m-1, M_{m} \right) &= \mathbb{P}_{n,i}\left( M_{m} \cap J_0 \cap  \bigcap_{k=1}^{m-1} \left( H_{k} \cap J_k  \right) \cap H_m^c \right) , \\
\mathbb{P}_{n,i}(\tau = 2m, M_{m}) &= \mathbb{P}_{n,i}\left(  M_{m} \cap J_0 \cap \bigcap_{k=1}^{m-1} \left( H_{k} \cap J_k  \right) \cap H_m \cap J_m^c \right),
\end{align*}
with the convention that $\bigcap_{k=1}^0 (H_k \cap J_k) = \Omega$. 

Let $\mathcal{F}_t$ denote the sigma-algebra that contains the history of the inbound exploration process in the graph as well as that of the PBT, up to the end of Step $t$ of the graph exploration process. It follows that we can write:
\begin{align*}
\mathbb{P}_{n,i}(\tau = 2m-1, M_{m}) &= \mathbb{E}_{n,i}\left[ 1\left( M_{m-1} \cap J_0 \cap  \bigcap_{k=1}^{m-1} \left( H_{k} \cap J_k  \right) \right) \mathbb{P}_n( M_m \cap H_m^c | \mathcal{F}_{2(m-1)}) \right] , \\
\mathbb{P}_{n,i}(\tau = 2m, M_m) &= \mathbb{E}_{n,i}\left[  1\left( M_{m-1} \cap J_0 \cap \bigcap_{k=1}^{m-1} \left( H_{k} \cap J_k  \right) \cap H_m \right) \mathbb{P}_n( M_m \cap J_m^c | \mathcal{F}_{2m-1}) \right].
\end{align*}
To analyze the two conditional probabilities inside the expectations above note that conditionally on $\mathcal{F}_{2(m-1)}$, the types of the nodes in $I_{m-1}$ are known and so are the nodes in $A_{m-1}$.  Therefore, by the union bound and the independence among the edges, we have:
\begin{align*}
\mathbb{P}_n( M_m \cap H_m^c | \mathcal{F}_{2(m-1)}) &= \mathbb{P}_n\left( \left. M_m \cap \bigcup_{i \in A_{m-1}} F_i(I_{m-1} \cup \mathbb{A}_i)^c \right| \mathcal{F}_{2(m-1)} \right)  \\
&\leq \sum_{i \in A_{m-1} }  \mathbb{P}_n\left( \left. M_m \cap F_i(I_{m-1} \cup \mathbb{A}_i )^c \right| \mathcal{F}_{2(m-1)} \right)  \\
&\leq \sum_{i \in A_{m-1} }   \min\left\{  1, \, \mathbb{P}_n\left( \left.  \max_{1 \leq j \leq n, j \neq i} |X_{ji} - Z_{ji}| \geq 1   \right| \mathcal{F}_{2(m-1)} \right) \right. \\
&\hspace{5mm} + \left.  \mathbb{P}_n\left( \left. M_m \cap \left\{ Z_{ii} +  \sum_{1 \leq j \leq n, j \neq i, j \in I_{m-1} \cup \mathbb{A}_i}  Z_{ji} \geq 1\right\}   \right| \mathcal{F}_{2(m-1)} \right)  \right\}   .
\end{align*}
Now use the independence of the edges from the rest of the exploration process and Lemma~\ref{L.EdgeDiscrepancy} to obtain that
\begin{align*}
 \mathbb{P}_n\left(  \left. \max_{1 \leq j \leq n, j \neq i} |X_{ji} - Z_{ji}| \geq 1 \right| \mathcal{F}_{2(m-1)} \right) &=  \mathbb{P}_n\left(  \max_{1 \leq j \leq n, j \neq i} |X_{ji} - Z_{ji}| \geq 1  \right) \\
 &\leq 1(W_i^- > b_n) + \mathcal{P}_n^-(i) + \bar W_i^- \eta_n^- .
\end{align*}
Next, condition further on the exploration up to the moment we are about to explore the inbound neighbors of $i$, and use the independence of the edges from the rest of the exploration process to obtain that
\begin{align*}
& \mathbb{P}_n\left(  \left. M_m \cap \left\{  Z_{ii} + \sum_{1 \leq j \leq n, j \neq i,  j \in I_{m-1} \cup \mathbb{A}_i} Z_{ji} \geq 1 \right\}  \right| \mathcal{F}_{2(m-1)} \right) \\
&\leq \mathbb{E}_n\left[ 1( |\mathbb{A}_i| \leq s_n) \left(  1 - e^{-q_{ii}^{(n)} -  \sum_{1 \leq j \leq n, j \neq i, j \in I_{m-1} \cup \mathbb{A}_i} q_{ji}^{(n)} }  \right) \right] \\
 &= \mathbb{E}_n\left[ 1(|\mathbb{A}_i| \leq s_n) \left( 1 - e^{-\frac{{\bar W}_i^-}{\theta n} \left( {\bar W}_i^+ +  \sum_{1 \leq j \leq n, j \neq i, j \in I_{m-1} \cup \mathbb{A}_i} {\bar W}_j^+  \right) } \right) \right]  \\
 &\leq \mathbb{E}_n\left[ 1(|\mathbb{A}_i| \leq s_n) \left(  1 - e^{-\frac{a_n {\bar W}_i^-}{\theta n} (1 + |I_{m-1}| + |\mathbb{A}_i| )}  \right) \right] \\
 &\leq  \frac{a_n {\bar W}_i^-}{\theta n} (1 + |I_{m-1}| + s_n) ,
\end{align*}
where in the last inequality we used $1 - e^{-x} \leq x$ for $x \geq 0$ and $|\mathbb{A}_i| \leq s_n$. 

It follows that 
\begin{align*}
\mathbb{P}_{n,i}(\tau = 2m-1, M_{m})  &\leq  \mathbb{E}_{n,i}\left[ 1\left( M_{m-1} \cap J_0 \cap  \bigcap_{k=1}^{m-1} \left( H_{k} \cap J_k  \right) \right)  \sum_{j \in A_{m-1}}  \min\left\{1, \,  \mathcal{P}_n^-(j) \phantom{\frac{\bar W^-}{\theta}} \right. \right.  \\
&\hspace{5mm} \left.  \left. +\eta_n^- \bar W_j^- + \frac{a_n {\bar W}_j^-}{\theta n} (1 + |I_{m-1}| + s_n)  \right\}  \right] .
\end{align*}
Almost the exact arguments, along with the observation that the event $|\hat A_m|$ is measurable with respect to $\mathcal{F}_{2m-1}$, can be used to obtain
\begin{align*}
\mathbb{P}_{n,i}(\tau = 2m, M_{m}) &\leq \mathbb{E}_{n,i}\left[  1\left( M_{m-1} \cap \{ |\hat A_m| \leq s_n\} \cap J_0 \cap \bigcap_{k=1}^{m-1} \left( H_{k} \cap J_k  \right) \cap H_m \right)  \right. \\
&\hspace{5mm} \left. \times  \sum_{j \in A_m} 1(\bar W_i^+ \leq c_n) \min\left\{ 1, \, \mathcal{P}_n^+(j)  + \eta_n^+ \bar W_j^+ + \frac{b_n {\bar W}_j^+}{\theta n} (1 + |T_{m}|)  \right\}   \right].
\end{align*}
To analyze these two remaining expectations we note that on the events $\{J_0 \cap  \bigcap_{k=1}^{m-1} \left( H_{k} \cap J_k \right) \}$ and $\{ J_0 \cap \bigcap_{k=1}^{m-1} \left( H_{k} \cap J_k  \right) \cap H_m\}$ the coupling has not broken yet, and therefore we can can replace $A_{m-1}$, $I_{m-1}$, $A_m$ and $T_{m}$ with their tree counterparts $\hat A_{m-1}$, $\hat I_{m-1}$, $\hat A_m$ and $\hat T_{m}$.  Also, note that by Lemma~\ref{L.Independence} we have that the types of the nodes in each of the active sets $\hat A_k$ are independent of the type of their parents. We will then identify the nodes in $\hat A_{m-1}$ (or $\hat A_{m-1}$) as $\{Y_1, \dots, Y_{|\hat A_{m-1}|}\}$ (or $\{ Y_1, \dots, Y_{|\hat A_m|}\}$), where for any $t \geq 1$, 
$$\mathbb{P}_n(Y_{t} = j) = \frac{{\bar W}_j^+}{\Lambda_n^+}, \quad j = 1, 2, \dots, n.$$

It follows that
\begin{align*}
\mathbb{P}_{n,i}(\tau = 2m-1, M_{m})  &\leq  \mathbb{E}_{n,i}\left[ 1\left( M_{m-1} \right) \sum_{t=1}^{|\hat A_{m-1}|}  \min\left\{ 1,  \, \mathcal{P}_n^-(Y_t ) + \eta_n^- \bar W_{Y_t}^-  \phantom{\frac{\bar W^-}{\theta}}  \right. \right. \\
&\hspace{5mm}  \left. \left. + \frac{a_n {\bar W}_{Y_t}^-}{\theta n} (1 + |\hat I_{m-1}| + s_n)    \right\}   \right] 
\end{align*}
and
\begin{align*}
\mathbb{P}_{n,i}(\tau = 2m, M_{m}) &\leq \mathbb{E}_{n,i}\left[  1\left( M_{m-1} \cap \{ |\hat A_m| \leq s_n\} \right)  \sum_{t=1}^{|\hat A_m|} \min \left\{ 1, \, \mathcal{P}_n^+(Y_{t}) + \eta_n^+ W_{Y_t}^+ \phantom{\frac{\bar W^-}{\theta}}  \right. \right.  \\ 
&\left. \left.  +  \frac{b_n {\bar W}_{Y_t}^+}{\theta n} (1 + |\hat T_{m}|)    \right\}  \right].
\end{align*}

Since on the event $M_{k}$ we have $|\hat I_{k}| \leq s_n \leq b_n$ and $|\hat A_k| \leq |\hat T_{k+1}| \leq s_n \leq a_n$, we further obtain that
\begin{align*}
\mathbb{P}_{n,i}(\tau = 2m-1, M_{m}) &\leq  \mathbb{E}_{n,i}\left[ \sum_{t=1}^{\lfloor s_n \rfloor}  \min \left\{ 1, \, \mathcal{P}_n^-(Y_t ) + \eta_n^- \bar W_{Y_t}^- + \frac{a_n {\bar W}_{Y_t}^-}{\theta n} (1 + 2b_n)    \right\}  \right] \\
&\leq s_n  \mathbb{E}_{n}\left[   \min\left\{ 1, \, \mathcal{P}_n^-(Y_1 ) + (\eta_n^- + a_n (1+2b_n)/(\theta n))  \bar W_{Y_1}^- \right\}   \right]
\end{align*}
and
\begin{align*}
\mathbb{P}_{n,i}(\tau = 2m, M_{m}) &\leq \mathbb{E}_{n,i}\left[  \sum_{t=1}^{\lfloor s_n \rfloor}  \min\left\{ 1, \, \mathcal{P}_n^+(Y_{t}) + \eta_n^+ \bar W_{Y_t}^+ +  \frac{b_n {\bar W}_{Y_t}^+}{\theta n} (1 +a_n)    \right\}   \right] \\
&\leq s_n \mathbb{E}_{n}\left[  \min \left\{ 1, \,  \mathcal{P}_n^+(Y_1) + (\eta_n^+ +b_n(1+a_n)/(\theta n)) \bar W_{Y_1}^+ \right\}  \right] .
\end{align*}

It only remains to compute the last two expectations. Throughout the rest of the proof, let $(W^{(-,n)}, W^{(+,n)}, W^-, W^+)$ be constructed according to the optimal coupling for $F_n$ and $F$, i.e., $\mathbb{E}_n\left[ |W^{(-,n)} - W^- | + |W^{(+,n)} - W^+ | \right] = \Delta_n$. Let $(\bar W^{(-,n)}, \bar W^{(+,n)}) = (W^{(-,n)} \wedge b_n, W^{(+,n)} \wedge a_n)$. Now let $\gamma_n^- = (E[W^-] + \Delta_n) (\eta_n^- + a_n(1+2b_n)/(\theta n))$ and note that for any $c_n \geq 1$, 
\begin{align*}
&s_n  \mathbb{E}_{n}\left[  \min\left\{ 1,\, \mathcal{P}_n^-(Y_1 ) + \frac{\gamma_n^-}{E[W^-] + \Delta_n} \bar W_{Y_1}^- \right\}     \right] \\
&= s_n \sum_{j=1}^n \frac{\bar W_j^+}{\Lambda_n^+} \min \left\{ 1, \, \mathcal{P}_n^-(j ) + \frac{\gamma_n^-}{E[W^-] + \Delta_n}  \bar W_j^- \right\}  \\
&\leq \frac{s_n n}{\Lambda_n^+} \cdot \frac{1}{n} \sum_{j=1}^n (\bar W_j^+ - c_n)^+ + \frac{s_n  n}{\Lambda_n^+} \cdot \frac{1}{n} \sum_{j=1}^n c_n \min \left\{ 1, \, \mathcal{P}_n^-(j ) + \frac{\gamma_n^-}{E[W^-]+\Delta_n} \bar W_j^- \right\}  \\
&\leq \frac{s_n}{\Lambda_n^+/n} \cdot \mathbb{E}_n[ (\bar W^{(+,n)} - c_n)^+] + \frac{s_n c_n}{\Lambda_n^+/n}  \left( \mathcal{E}_n + \frac{\gamma_n^-}{E[W^-] + \Delta_n} \mathbb{E}_n[ \bar W^{(-,n)}] \right) \\
&\leq \frac{s_n}{\Lambda_n^+/n} \left( g_+(c_n) + \Delta_n + c_n \mathcal{E}_n + c_n \gamma_n^-   \right). 
\end{align*}
Essentially the same arguments also yield for $\gamma_n^+ = (E[W^+] + \Delta_n) (\eta_n^+ + b_n(1+a_n)/(\theta n))$,
\begin{align*}
&s_n \mathbb{E}_{n}\left[  \min \left\{ 1, \,  \mathcal{P}_n^+(Y_1) + \frac{\gamma_n^+}{E[W^+] + \Delta_n} \bar W_{Y_1}^+ \right\}  \right]  \\
&\leq \frac{s_n}{\Lambda_n^+/n } \left( g_+(c_n) + \Delta_n + c_n \mathcal{E}_n + c_n \gamma_n^+  \right).
\end{align*}
This completes the proof. 
\end{proof}

We are now ready to prove Theorem~\ref{T.CouplingBreaks}.

\begin{proof}[Proof of Theorem~\ref{T.CouplingBreaks}]
Fix $c_n, s_n \geq 1$ with $s_n \leq a_n \wedge b_n$, and the event $M_m$ as in Proposition~\ref{P.CouplingBreaksAtStepM}. Now write
\begin{align*}
\mathbb{P}_{n,i}(\tau \leq 2k) &\leq \mathbb{P}_{n,i}(\tau \leq 2k, M_k)  + \mathbb{P}_{n,i}(M_k^c) \\
&= \mathbb{P}_{n,i}(\tau = 0, M_k) + \sum_{m=1}^{k} \left\{ \mathbb{P}_{n,i}(\tau = 2m-1, M_k) + \mathbb{P}_{n,i}(\tau = 2m, M_k) \right\} + \mathbb{P}_{n,i}(M_k^c) \\
&\leq \mathbb{P}_{n,i}(\tau = 0) + \sum_{m=1}^{k} \left\{ \mathbb{P}_{n,i}(\tau = 2m-1, M_{m}) + \mathbb{P}_{n,i}(\tau = 2m, M_m) \right\} + \mathbb{P}_{n,i}(M_k^c)  ,
\end{align*}
where in the last inequality we used the observation that $M_{m+1} \subseteq M_m$ for all $m \geq 1$. Now use Lemma~\ref{L.TauZero} and Proposition~\ref{P.CouplingBreaksAtStepM} to obtain that
\begin{align*}
\mathbb{P}_{n,i}(\tau \leq 2k) &\leq 1(W_i^+ > a_n)  +  \mathcal{P}_n^+(i) + \bar W_i^+ (\eta_n^+ + b_n/(\theta n)) \\
&\hspace{5mm} + \sum_{m=1}^k \frac{s_n}{\Lambda_n^+/n } \left( 2g_+(c_n) + 2\Delta_n + 2c_n \mathcal{E}_n + c_n (\gamma_n^-+ \gamma_n^+)   \right)  + \mathbb{P}_{n,i}(M_k^c) \\
&\leq  1(W_i^+ > a_n) + \mathcal{P}_n^+(i) +  (\mathcal{H}_n/\theta) \bar W_i^+ (\Delta_n + g_-(b_n) + a_n b_n/n)  \\
&\hspace{5mm} +  \mathcal{H}_n  k s_n \left(  g_+(c_n) + c_n \left( \mathcal{E}_n + \Delta_n + g_+(a_n) + g_-(b_n) + a_n b_n/n \right)  \right)  \\
&\hspace{5mm} + \mathbb{P}_{n,i}( |\hat T_{k+1}| \vee |\hat I_k| > s_n),
\end{align*}
where 
$$\mathcal{H}_n = \frac{2}{\Lambda_n^+/n}  (1+ \Delta_n/\theta)^2(3 + a_n^{-1} + b_n^{-1} + c_n^{-1} ).$$ 
\end{proof}

\subsection{Proof of Theorem~\ref{T.CouplingFinal}} \label{SS.CouplingCorollary}

In view of Theorem~\ref{T.CouplingBreaks}, the proof of Theorem~\ref{T.CouplingFinal} reduces to showing that we can choose $a_n, b_n, c_n, s_n$ such that the bound in Theorem~\ref{T.CouplingBreaks} converges to zero. The only term that is not yet explicit is 
$$\frac{1}{n} \sum_{i=1}^n \mathbb{P}_{n,i}\left(  |\hat T_{k+1}|  \vee |\hat I_k| > s_n \right) ,$$
which we will first write in terms of a marked Galton-Watson process that does not depend on the type sequence $\mathscr{W}_n = \{ {\bf W}_i: 1 \leq i \leq n\}$. To do this we need two preliminary results, the first of which shows  the convergence of the degree vectors $(\hat N_\emptyset, \hat D_\emptyset)$ and  $(\hat N_1, \hat D_1)$ in the total variation distance.

\begin{lemma} \label{L.TotalVariationDegrees}
Define $(\bar W^-, \bar W^+) = (W^- \wedge b_n, W^+ \wedge a_n)$ and consider the joint distributions
\begin{align*}
\mathbb{P}_{n}\left( \hat N_\emptyset = m, \, \hat D_\emptyset = k \right) &= \sum_{s=1}^n p(m; \Lambda_n^+  {\bar W}_s^-/(\theta n)) \cdot p(k; \Lambda_n^- {\bar W}_s^+/(\theta n)) \cdot \frac{1}{n}, \\
\mathbb{P}_{n}\left( \hat N_1 = m,  \hat D_1 = k \right) &= \sum_{s=1}^n p(m; \Lambda_n^+ {\bar W}_s^-/(\theta n)) \cdot p(k; \Lambda_n^- {\bar W}_s^+/(\theta n))\cdot \frac{{\bar W}_s^+ }{\Lambda_n^+}, \\
P\left( \bar{\mathcal{N}}_0 = m, \, \bar{\mathcal{D}}_0 = k \right) &= E\left[ p(m; E[ \bar W^+] \bar W^-/\theta) \cdot p(k; E[ \bar W^-] \bar W^+/\theta) \right], \\
P \left( \bar{\mathcal{N}} = m,  \bar{\mathcal{D}} = k \right) &= \frac{1}{E[W^+]} E\left[ W^+ p(m; E[ \bar W^+] \bar W^-/\theta) \cdot p(k; E[ \bar W^-] \bar W^+/\theta) \right],
\end{align*}
for $m,k \in \mathbb{N}$, where $p(m;\lambda) = e^{-\lambda} \lambda^m/m!$. Then, under Assumption~\ref{A.Types} (a)-(b), and for any $c_n \geq 1$,  
\begin{align*}
\sup_{A \subseteq \mathbb{N}^2} \left| \mathbb{P}_n\left( ( \hat N_\emptyset, \hat D_\emptyset) \in A \right) - P( (\bar{\mathcal{N}}_0, \bar{\mathcal{D}}_0) \in A) \right| &\leq  \frac{\Delta_n^2}{\theta} +2 \Delta_n, \\
\sup_{A \subseteq \mathbb{N}^2} \left| \mathbb{P}_n\left( ( \hat N_1, \hat D_1) \in A \right) - P( (\bar{\mathcal{N}}, \bar{\mathcal{D}}) \in A) \right| &\leq  \frac{1}{E[\bar W^+]} \left( 4c_n \Delta_n + 2g_+(a_n)  +g_+(c_n)   + c_n \Delta_n^2/\theta  \right) ,
\end{align*}
where $g_+$ is defined as in Lemma~\ref{L.EdgeDiscrepancy}.
\end{lemma}

\begin{proof}
By Assumption~\ref{A.Types} (a)-(b) and the observations following Definition~\ref{D.CouplingBreaks}, we know that $F_n$ converges to $F$ in the Kantorovich-Rubinstein distance, and therefore we can pick a random vector
$$\left(W^{(-,n)}, W^{(+,n)}, W^-, W^+ \right)$$
 so that $(W^{(-,n)}, W^{(+,n)})$ has distribution $F_n$, $(W^-, W^+)$ has distribution $F$, and
$$\mathbb{E}_n\left[ |W^{(-,n)} - W^-| + |W^{(+,n)} - W^+|  \right] = d_1(F_n, F) = \Delta_n.$$
Next, using this optimal coupling define $(\bar W^{(-,n)}, \bar W^{(+,n)}) = (W^{(-,n)} \wedge b_n, W^{(+,n)} \wedge a_n)$,  $X^{(-,n)} = \Lambda_n^+ \bar W^{(-,n)} /(\theta n)$, $X^{(+,n)} = \Lambda_n^- \bar W^{(+,n)} /(\theta n)$, $X^- = E[ \bar W^+] \bar W^-/\theta$, and $X^+ = E[\bar W^-] \bar W^+/\theta$. Let $\mathscr{G}_n = \sigma\left( \mathscr{F}_n \cup \sigma( W^{(-,n)}, W^{(+,n)}, W^-, W^+) \right)$. 

Now note that for any $A \subseteq \mathbb{N}^2$, 
\begin{align*}
& \left| \mathbb{P}_n\left( (\hat N_\emptyset, \hat D_\emptyset) \in A\right) - P\left((\mathcal{N}_0, \mathcal{D}_0) \in A  \right) \right| \\
&\leq   \mathbb{E}_n\left[ \left| \mathbb{P}_n\left( \left. (\hat N_\emptyset, \hat D_\emptyset) \in A \right| \mathscr{G}_n \right) - \mathbb{P}_n\left( \left. (\bar{\mathcal{N}}_0, \bar{\mathcal{D}}_0) \in A \right| \mathscr{G}_n  \right) \right| \right] \\
&\leq \mathbb{E}_n\left[ \sup_{B \subseteq \mathbb{N}} \left| \mathbb{P}_n( \hat N_\emptyset \in B | \mathscr{G}_n) -  \mathbb{P}_n( \bar{\mathcal{N}}_0 \in B | \mathscr{G}_n) \right| +  \sup_{B \subseteq \mathbb{N}} \left| \mathbb{P}_n( \hat D_\emptyset \in B | \mathscr{G}_n) -  \mathbb{P}_n( \bar{\mathcal{D}}_0 \in B | \mathscr{G}_n) \right|   \right]  \\
&\leq \mathbb{E}_n\left[ \min\{1,  | X^{(-,n)} - X^-|\} + \min\{ 1, | X^{(+,n)} - X^+| \}    \right] ,
\end{align*}
where in the first inequality we used the conditional independence of $(\hat N_\emptyset, \bar{\mathcal{N}}_0)$ and $(\hat D_\emptyset, \bar{\mathcal{D}}_0)$ given $\mathscr{G}_n$, and in the second one we used the observation that if $\text{Poi}(\lambda)$ denotes a Poisson random variable with mean $\lambda$, then
$$\sup_{A \in \mathbb{N}} | P(\text{Poi}(\mu) \in A) - P(\text{Poi}(\lambda) \in A) |  \leq P(\text{Poi}(|\mu - \lambda|) \geq 1) \leq \min\{ 1, |\mu-\lambda|\}.$$
Moreover, since $\Lambda_n^\pm/n = \mathbb{E}_n[ \bar W^{(\pm,n)}]$ and $\mathbb{E}_n[ |\bar W^{(\pm,n)} - \bar W^\pm| ] \leq  \mathbb{E}_n[ |W^{(\pm,n)} - W^\pm| ] $, we have that
\begin{align*}
\mathbb{E}_n\left[ |X^{(-,n)} - X^-| \right] &\leq \frac{\Lambda_n^+}{\theta n} \mathbb{E}_n\left[ | \bar W^{(-,n)}  - \bar W^-| \right] + E[\bar W^-] \left| \frac{\Lambda_n^+}{\theta n} - \frac{E[\bar W^+]}{\theta} \right| \\
&\leq \left( \frac{\mathbb{E}_n[ \bar W^{(+,n)} - \bar W^+] }{\theta} + \frac{E[\bar W^+]}{\theta} \right) \mathbb{E}_n\left[ |\bar W^{(-,n)}  - \bar W^- | \right] \\
&\hspace{5mm} + \frac{E[\bar W^-]}{\theta} \left| \mathbb{E}_n[ \bar W^{(+,n)} ]  - E[\bar W^+]  \right| \\
&\leq \left( \frac{\mathbb{E}_n[ |W^{(+,n)} - W^+ |] }{\theta} + \frac{E[W^+]}{\theta} \right) \mathbb{E}_n[ |W^{(-,n)}  - W^- | ] \\
&\hspace{5mm} + \frac{E[W^-]}{\theta} \mathbb{E}_n[ | W^{(+,n)} -W^+| ]  ,
\end{align*}
and similarly,
\begin{align*}
\mathbb{E}_n\left[ |X^{(+,n)} - X^+| \right] &\leq \left( \frac{\mathbb{E}_n[ | W^{(-,n)} - W^-| ] }{\theta} + \frac{E[W^-]}{\theta} \right) \mathbb{E}_n [ | W^{(+,n)}  - W^+| ] \\
&\hspace{5mm} + \frac{E[W^-]}{\theta}  \mathbb{E}_n[ |W^{(+,n)} - W^+| ] .
\end{align*}
Combining the two bounds we obtain
$$\mathbb{E}_n\left[ |X^{(-,n)} - X^-| + |X^{(+,n)} - X^+| \right]  \leq \frac{\Delta_n^2}{\theta} +2 \Delta_n.$$
Taking the supremum over all $A$ gives the first result. 

For the second result we start by noting that for any $A \subseteq \mathbb{N}^2$,
\begin{align*}
\mathbb{P}_n\left( (\hat N_1, \hat D_1) \in A\right) &= \mathbb{E}_n\left[ \frac{\bar W^{(+,n)}}{\mathbb{E}_n[ \bar W^{(+,n)}]} \mathbb{P}_n( (\hat N_\emptyset, \hat D_\emptyset) \in A | \mathscr{G}_n ) \right] 
\end{align*}
and
$$P\left((\bar{\mathcal{N}}, \bar{\mathcal{D}}) \in A  \right) = E \left[ \frac{W^+}{E[W^+]} P( (\bar{\mathcal{N}}_0, \bar{\mathcal{D}}_0) \in A | W^-, W^+) \right].$$
Hence, for any $A \subseteq \mathbb{N}^2$ and any $c_n \geq 1$, 
\begin{align*}
& \left| \mathbb{P}_n\left( (\hat N_1, \hat D_1) \in A\right) - P\left((\bar{\mathcal{N}}, \bar{\mathcal{D}}) \in A  \right) \right| \\
&\leq  \left| \mathbb{E}_n\left[ \frac{\bar W^{(+,n)} }{ \mathbb{E}_n[ \bar W^{(+,n)} ] }  \mathbb{P}_n( (\hat N_\emptyset, \hat D_\emptyset) \in A | \mathscr{G}_n ) \right] - \mathbb{E}_n\left[ \frac{\bar W^+ }{E[ \bar W^+ ] }  \mathbb{P}_n( (\hat N_\emptyset, \hat D_\emptyset) \in A | \mathscr{G}_n ) \right]  \right|  \\
&\hspace{5mm} +  \left|  \mathbb{E}_n\left[ \frac{\bar W^+ }{E[ \bar W^+ ] }  \mathbb{P}_n( (\hat N_\emptyset, \hat D_\emptyset) \in A | \mathscr{G}_n ) \right]  - \mathbb{E}_n \left[ \frac{\bar W^+ }{E[ \bar W^+] }  \mathbb{P}_n \left( \left. (\bar{\mathcal{N}}_0, \bar{\mathcal{D}}_0) \in A \right| \mathscr{G}_n  \right) \right]  \right| \\
&\hspace{5mm} +  \left| E\left[ \frac{\bar W^+ }{E[ \bar W^+] } P\left( \left. (\bar{\mathcal{N}}_0, \bar{\mathcal{D}}_0) \in A \right| W^-, W^+ \right) \right]  - E\left[ \frac{W^+}{E[W^+]} P( (\bar{\mathcal{N}}_0, \bar{\mathcal{D}}_0) \in A | W^-, W^+) \right] \right| \\
&\leq \mathbb{E}_n\left[ \left| \frac{\bar W^{(+,n)}}{\mathbb{E}_n[ \bar W^{(+,n)}]} - \frac{\bar W^+}{E[\bar W^+]} \right| \right] + \mathbb{E}_n\left[ \frac{\bar W^+}{E[\bar W^+]} \left|  \mathbb{P}_n( (\hat N_\emptyset, \hat D_\emptyset) \in A | \mathscr{G}_n ) - \mathbb{P}_n\left( \left. (\bar{\mathcal{N}}_0, \bar{\mathcal{D}}_0) \in A \right| \mathscr{G}_n  \right) \right| \right] \\
&\hspace{5mm} + E\left[ \left| \frac{\bar W^+}{E[\bar W^+]} - \frac{W^+}{E[W^+]} \right| \right] \\
&\leq \frac{\mathbb{E}_n\left[  | E[\bar W^+] \bar W^{(+,n)} - \mathbb{E}_n[ \bar W^{(+,n)}] \bar W^+| \right]}{\mathbb{E}_n[ \bar W^{(+,n)}] E[\bar W^+]} + \frac{E[| \bar W^+ E[W^+] - W^+ E[\bar W^+]|]}{E[\bar W^+] E[W^+]}  \\
&\hspace{5mm} + \mathbb{E}_n\left[ \frac{\bar W^+}{E[\bar W^+]} \left( \min\{ 1, |X^{(-,n)} - X^-|\} + \min\{ 1, |X^{(+,n)} - X^+| \} \right) \right] \\
&\leq \frac{  2\mathbb{E}_n[| \bar W^{(+,n)} - \bar W^+|] }{E[\bar W^+]} + \frac{2 E[ (W^+ - a_n)^+] }{E[ W^+]}  \\
&\hspace{5mm} + \mathbb{E}_n\left[ \frac{\bar W^+}{E[\bar W^+]} \left( \min\{ 1, |X^{(-,n)} - X^-|\} + \min\{ 1, |X^{(+,n)} - X^+| \} \right) \right] \\
&\leq \frac{2 \Delta_n}{E[\bar W^+]} + \frac{2 g_+(a_n)}{E[W^+]} + \frac{c_n}{E[\bar W^+]} \mathbb{E}_n\left[  |X^{(-,n)} - X^-|\ +  |X^{(+,n)} - X^+|\right] + \frac{E[(\bar W^+- c_n)^+]}{E[\bar W^+]} \\
&\leq \frac{1}{E[\bar W^+]} \left( 2 \Delta_n + 2g_+(a_n)  +g_+(c_n)  + 2 c_n \Delta_n + c_n \Delta_n^2/\theta  \right) .\end{align*}
This completes the proof. 
\end{proof}

The second technical result prior to the proof of Theorem~\ref{T.CouplingFinal} states the convergence in total variation of the processes $|\hat T_{k+1}|$ and $|\hat I_k|$. Note that the delayed marked Galton-Watson process appearing in the lemma still depends on $n$ via the truncation of $W^+ \wedge a_n$ and $W^- \wedge b_n$, but does not depend on the type sequence $\mathscr{W}_n = \{ {\bf W}_i: 1 \leq i \leq n\}$. In particular, by monotonicity of the Poisson distribution in its parameter, we have that under Assumption~\ref{A.Types} (a)-(b), 
$$(\bar{\mathcal{N}}_0, \bar{\mathcal{D}}_0) \nearrow (\mathcal{N}_0, \mathcal{D}_0) \quad \text{a.s.} \qquad \text{and} \qquad (\bar{\mathcal{N}}, \bar{\mathcal{D}}) \nearrow (\mathcal{N}, \mathcal{D}) \quad \text{a.s} ,$$
as $n \to \infty$, for well-defined random vectors $(\mathcal{N}_0, \mathcal{D}_0)$ and $(\mathcal{N}, \mathcal{D})$. Moreover, under Assumption~\ref{A.Types} (a)-(b) we have that $E[\mathcal{N}_0 + \mathcal{D}_0] < \infty$, although it is possible to have $E[\mathcal{N} + \mathcal{D}] = \infty$. If the latter happens, the probability $P\left( |\bar{\mathcal{T}}_{k+1}| \vee |\bar{\mathcal{I}}_k| > s_n \right)$ will still converge to zero as $s_n \to \infty$ for any fixed $k \in \mathbb{N}_+$, however, it may do so very slowly.

\begin{lemma} \label{L.TotalVariation}
Under Assumption~\ref{A.Types} (a)-(b) we have that for any fixed $k \geq 1$ and any $c_n, s_n \geq 1$,
\begin{align*}
\frac{1}{n} \sum_{i=1}^n \mathbb{P}_{n,i}\left(  |\hat T_{k+1}|  \vee |\hat I_k| > s_n \right) &\leq P\left( |\bar{\mathcal{T}}_{k+1}| \vee |\bar{\mathcal{I}}_k| > s_n \right) +\frac{\Delta_n^2}{\theta} + 2\Delta_n \\
&\hspace{5mm}  + \frac{k s_n}{E[W^+ \wedge a_n]} \left( 4c_n \Delta_n + 2g_+(a_n)  +g_+(c_n)   + c_n \Delta_n^2/\theta  \right),
 \end{align*}
where $|\bar{\mathcal{T}}_{k+1}| =  \sum_{m=0}^{k} |\bar{\mathcal{A}}_m|$, $|\bar{\mathcal{I}}_k| = \sum_{m=0}^k \sum_{{\bf i} \in \bar{\mathcal{A}}_m} \bar{\mathcal{D}}_{\bf i}$,  and $\bar{\mathcal{A}}_m$ is the set of individuals in the $m$th generation of a delayed marked Galton-Watson process whose root has offspring/mark distributed as $(\bar{\mathcal{N}}_0, \bar{\mathcal{D}}_0)$ and all other nodes have offspring/mark distributed as $(\bar{\mathcal{N}}, \bar{\mathcal{D}})$, as defined in Lemma~\ref{L.TotalVariationDegrees}.
\end{lemma}

\begin{proof}
We start by noting that under the measure $\mathbb{P}_n(\cdot) = n^{-1} \sum_{i=1}^n \mathbb{P}_{n,i}(\cdot)$,  $|\hat T_{k+1}|$ and $|\hat I_k|$ denote the total population and the sum of all the marks, up to generation $k$, on a marked Galton-Watson process whose offspring/mark distribution is that of $(\hat N_\emptyset, \hat D_\emptyset)$ for the root node and $(\hat N_1, \hat D_1)$ for all other nodes, as defined in Lemma~\ref{L.TotalVariationDegrees}. Next, let $(\hat N_\emptyset, \hat D_\emptyset, \bar{\mathcal{N}}_0, \bar{\mathcal{D}}_0)$ and $(\hat N_1, \hat D_1, \bar{\mathcal{N}}, \bar{\mathcal{D}})$ be couplings satisfying 
$$\sup_{A \subseteq \mathbb{N}^2} \left| \mathbb{P}_n( (\hat N_\emptyset, \hat D_\emptyset) \in A) - P( (\bar{\mathcal{N}}_0, \bar{\mathcal{D}}_0) \in A) \right| = \mathbb{P}_n( (\hat N_\emptyset, \hat D_\emptyset) \neq (\bar{\mathcal{N}}_0, \bar{\mathcal{D}}_0) )$$
and
$$\sup_{A \subseteq \mathbb{N}^2} \left| \mathbb{P}_n( (\hat N_1, \hat D_1) \in A) - P( (\bar{\mathcal{N}}, \bar{\mathcal{D}}) \in A) \right| = \mathbb{P}_n( (\hat N_1, \hat D_1) \neq (\bar{\mathcal{N}}, \bar{\mathcal{D}}) ),$$
which are guaranteed to exist (see, e.g., Theorem~2.12 in \cite{Hollander_2012}). Construct the two marked Galton-Watson processes simultaneously using this optimal coupling of the degree/mark vectors and let $\sigma = \inf\{ m \geq 0: (\hat N_{\bf i}, \hat D_{\bf i}) \neq ( \bar{\mathcal{N}}_{\bf i}, \bar{\mathcal{D}}_{\bf i}) \text{ for some } {\bf i} \in \bar{\mathcal{A}}_m \}$.

Now note that
\begin{align*}
\frac{1}{n} \sum_{i=1}^n \mathbb{P}_{n,i}\left(  |\hat T_{k+1}|  \vee |\hat I_k| > s_n \right) &= \mathbb{P}_n\left( |\hat T_{k+1}| \vee |\hat I_k| > s_n \right) \\
&\leq P\left( |\bar{\mathcal{T}}_{k+1}| \vee |\bar{\mathcal{I}}_k| > s_n \right) + \mathbb{P}_n\left(  |\hat T_{k+1}| \vee |\hat I_k|  > s_n \geq  |\bar{\mathcal{T}}_{k+1}| \vee |\bar{\mathcal{I}}_k|  \right) \\
&\leq P\left( |\bar{\mathcal{T}}_{k+1}| \vee |\bar{\mathcal{I}}_k| > s_n \right) + \sum_{m=0}^k \mathbb{P}_n\left(    |\bar{\mathcal{T}}_{k+1}| \vee |\bar{\mathcal{I}}_k| \leq s_n , \, \sigma = m \right) . 
\end{align*}
To analyze each of the probabilities in the sum let $\mathcal{F}_m = \sigma\left( (\hat N_{\bf i}, \hat D_{\bf i}, \bar{\mathcal{N}}_{\bf i}, \bar{\mathcal{D}}_{\bf i}): {\bf i} \in \hat A_l, 0 \leq l \leq m\right)$ and note that for $0 \leq m \leq k$:
\begin{align*}
\mathbb{P}_n\left(    |\bar{\mathcal{T}}_{k+1}| \vee |\bar{\mathcal{I}}_k| \leq s_n , \, \sigma = m \right)  &\leq  \mathbb{P}_n\left(    |\bar{\mathcal{A}}_{m}|  \leq s_n , \, \sigma = m \right) \\
&= \mathbb{E}_n\left[ 1( |\bar{\mathcal{A}}_{m}|  \leq s_n ) \, \mathbb{P}_n(\sigma = m | \mathcal{F}_{m-1} ) \right] \\
&\leq \mathbb{E}_n\left[ 1( |\bar{\mathcal{A}}_{m}|  \leq s_n )  \, \mathbb{P}_n\left( \left.  \bigcup_{{\bf i} \in \bar{\mathcal{A}}_m} \{ (\hat N_{\bf i}, \hat D_{\bf i}) \neq (\bar{\mathcal{N}}_{\bf i}, \bar{\mathcal{D}}_{\bf i}) \}  \right| \mathcal{F}_{m-1} \right) \right] \\
&\leq \mathbb{E}_n\left[ 1( |\bar{\mathcal{A}}_{m}|  \leq s_n )   \sum_{{\bf i} \in \bar{\mathcal{A}}_m}   \mathbb{P}_n\left(   (\hat N_{\bf i}, \hat D_{\bf i}) \neq (\bar{\mathcal{N}}_{\bf i}, \bar{\mathcal{D}}_{\bf i})  \right) \right] \\
&= \mathbb{E}_n\left[ 1( |\bar{\mathcal{A}}_{m}|  \leq s_n )  ( |\bar{\mathcal{A}}_{m}| -1)  \mathbb{P}_n\left(   (\hat N_1, \hat D_1) \neq (\bar{\mathcal{N}}, \bar{\mathcal{D}})  \right) \right] 1(m \geq 1) \\
&\hspace{5mm} +  \mathbb{P}_n\left(   (\hat N_\emptyset, \hat D_\emptyset) \neq (\bar{\mathcal{N}}_0, \bar{\mathcal{D}}_0)  \right)  \\
&\leq s_n  \mathbb{P}_n\left(   (\hat N_1, \hat D_1) \neq (\bar{\mathcal{N}}, \bar{\mathcal{D}})  \right) 1(m \geq 1) \\
&\hspace{5mm} + \mathbb{P}_n\left(   (\hat N_\emptyset, \hat D_\emptyset) \neq (\bar{\mathcal{N}}_0, \bar{\mathcal{D}}_0)  \right) .
\end{align*}

It follows that
\begin{align*}
\frac{1}{n} \sum_{i=1}^n \mathbb{P}_{n,i}\left(  |\hat T_{k+1}|  \vee |\hat I_k| > s_n \right) &\leq P\left( |\bar{\mathcal{T}}_{k+1}| \vee |\bar{\mathcal{I}}_k| > s_n \right)  + \mathbb{P}_n\left(   (\hat N_\emptyset, \hat D_\emptyset) \neq (\bar{\mathcal{N}}_0, \bar{\mathcal{D}}_0)  \right) \\
&\hspace{5mm} + k s_n  \mathbb{P}_n\left(   (\hat N_1, \hat D_1) \neq (\bar{\mathcal{N}}, \bar{\mathcal{D}})  \right) .
\end{align*}
The conclusion now follows from Lemma~\ref{L.TotalVariationDegrees}.
\end{proof}

We now use these two results to prove Theorem~\ref{T.CouplingFinal}.

\begin{proof}[Proof of Theorem~\ref{T.CouplingFinal}]
Let $(W^{(-,n)}, W^{(+,n)})$ be distributed according to $F_n$. Then, by Theorem~\ref{T.CouplingBreaks} and Lemma~\ref{L.TotalVariation} we have that
\begin{align*}
\frac{1}{n} \sum_{i=1}^n \mathbb{P}_{n,i}(\tau \leq 2k) &\leq \mathbb{P}_n( W^{(-,n)} > a_n)+ \mathcal{E}_n + (\mathcal{H}_n/\theta) \frac{\Lambda_n^+}{n} (\Delta_n + g_-(b_n) + a_n b_n/n) \\
&\hspace{5mm} + \mathcal{H}_n k s_n \left( g_+(c_n) + c_n (\mathcal{E}_n + \Delta_n + g_+(a_n) + g_-(b_n) + a_n b_n/n ) \right)  \\
&\hspace{5mm} + P\left( |\bar{\mathcal{T}}_k| \vee |\bar{\mathcal{I}}_k| > s_n \right) +\frac{\Delta_n^2}{\theta} + 2\Delta_n \\
&\hspace{5mm}  + \frac{k s_n}{E[W^+ \wedge a_n]} \left( 4c_n \Delta_n + 2g_+(a_n)  +g_+(c_n)   + c_n \Delta_n^2/\theta  \right) \\
&\leq \mathbb{P}_n( W^{(-,n)} > a_n) + P\left( |\bar{\mathcal{T}}_{k+1}| \vee |\bar{\mathcal{I}}_k| > s_n \right) \\
&\hspace{5mm} +  \mathcal{K}_n  k s_n \left( g_+(c_n) + c_n (\mathcal{E}_n + \Delta_n + g_+(a_n) + g_-(b_n) + a_n b_n/n) \right),
\end{align*}
with 
$$\mathcal{K}_n = \frac{2}{\mathbb{E}_n[ W^{(-,n)} \wedge a_n] \wedge E[ W^+ \wedge a_n]}  (1+ \Delta_n/\theta)^2(3 + a_n^{-1} + b_n^{-1} + c_n^{-1} ) ,$$
and $\bar{\mathcal{T}}_{k+1}, \bar{\mathcal{I}}_k$ defined as in Lemma~\ref{L.TotalVariation}. As argued right before the statement of Lemma~\ref{L.TotalVariation}, $\lim_{n \to \infty} |\bar{\mathcal{T}}_{k+1}| \vee |\bar{\mathcal{I}}_k| < \infty$ a.s. for any fixed $k \in \mathbb{N}_+$, and therefore,
$$\lim_{n \to \infty} P\left( |\bar{\mathcal{T}}_{k+1}| \vee |\bar{\mathcal{I}}_k| > s_n \right) = 0$$
provided $s_n \to \infty$. Clearly, $\lim_{n \to \infty} \mathbb{P}_n( W^{(-,n)} > a_n) \leq \lim_{n \to \infty} a_n^{-1} \mathbb{E}_n[ W^{(-,n)}] =  0$ for any $a_n \to \infty$, so it only remains to show that we can pick $a_n, b_n, c_n, s_n$ such that $\min\{a_n, b_n, c_n, s_n\} \xrightarrow{P} \infty$  and
\begin{equation} \label{eq:ChooseSequences}
s_n \left( g_+(c_n) + c_n (\mathcal{E}_n + \Delta_n + g_+(a_n) + g_-(b_n) + a_n b_n/n) \right) \xrightarrow{P} 0
\end{equation}
as $n \to \infty$. To this end, choose $a_n = b_n = n^{(1-\epsilon)/2}$ for some $0 < \epsilon < 1$, 
$$c_n = (\mathcal{E}_n + \Delta_n + g_+(a_n) + g_-(b_n) + a_n b_n/n)^{-1/2},$$
and $s_n = ( g_+(c_n) + c_n^{-1})^{-1/2}$. Assumption~\ref{A.Types} (a)-(c) guarantee that $c_n \xrightarrow{P} \infty$ as $n \to \infty$ and our choice of $s_n$ ensures \eqref{eq:ChooseSequences} holds. 
\end{proof}

 \label{SS.DegreeProof}
\subsection{Proof of Theorem~\ref{T.MixedPoisson}}

We now give a short proof of Theorem~\ref{T.MixedPoisson} using Theorem~\ref{T.CouplingBreaks} and Lemma~\ref{L.TotalVariationDegrees}. A direct proof is possible, but would involve repeating some of the arguments used earlier.

\begin{proof}[Proof of Theorem~\ref{T.MixedPoisson}]
Start by noting that $(Z^-, Z^+) \stackrel{\mathcal{D}}{=} (\mathcal{N}_0, \mathcal{D}_0)$, as defined in Lemma~\ref{L.TotalVariationDegrees}, and therefore,,
\begin{align*}
&\sup_{A \subseteq \mathbb{N}^2} \left| \mathbb{P}_n( (D_\xi^-, D_\xi^+) \in A) - P( (Z^-, Z^+) \in A) \right| \\
&\leq \sup_{A \subseteq \mathbb{N}^2}\left| \mathbb{P}_n( (D_\xi^-, D_\xi^+) \in A) - \mathbb{P}_{n}((\hat N_\emptyset, \hat D_\emptyset) \in A) \right| + \sup_{A \subseteq \mathbb{N}^2} \left| \mathbb{P}_{n}((\hat N_\emptyset, \hat D_\emptyset) \in A) - P( (\mathcal{N}_0, \mathcal{D}_0) \in A) \right| \\
&\leq \frac{1}{n} \sum_{i=1}^n \mathbb{P}_{n,i}( \tau \leq 1) + \sup_{A \subseteq \mathbb{N}^2} \left| \mathbb{P}_{n}((\hat N_\emptyset, \hat D_\emptyset) \in A) - P( (\mathcal{N}_0, \mathcal{D}_0) \in A) \right|.
\end{align*}
Now use Lemma~\ref{L.TauZero} and Proposition~\ref{P.CouplingBreaksAtStepM} as in the proof of Theorem~\ref{T.CouplingBreaks} to obtain that
\begin{align*}
\mathbb{P}_{n,i}(\tau \leq 1) &\leq \mathbb{P}_{n,i} (\tau = 0) + \mathbb{P}_{n,i}(\tau = 1, M_0) + \mathbb{P}_{n,i}(M_0^c) \\
&\leq 1(W_i^+ > a_n) + \mathcal{P}_{n,i}(i) + (\eta_n^+ + b_n/(\theta n)) \bar W_i^+ \\
&\hspace{5mm} + \frac{s_n}{\Lambda_n^+/n} (g_+(c_n) + \Delta_n + c_n \mathcal{E}_n + c_n \gamma_n^-)  + \mathbb{P}_{n,i}(|\hat I_0| > s_n).
\end{align*}
Note that under $\mathbb{P}_n(\cdot) = n^{-1} \sum_{i=1}^n \mathbb{P}_{n,i}(\cdot)$ we have that $|\hat I_0| \stackrel{\mathcal{D}}{=} \hat D_\emptyset$, so following the same steps as in the proof of Theorem~\ref{T.CouplingFinal}, we obtain
\begin{align*}
\frac{1}{n} \sum_{i=1}^n \mathbb{P}_{n,i}(\tau \leq 1) &\leq \mathbb{P}_n( W^{(-,n)} > a_n) + \mathbb{P}_n( \hat D_\emptyset > s_n) \\
&\hspace{5mm} +  \mathcal{K}_n  s_n \left( g_+(c_n) + c_n (\mathcal{E}_n + \Delta_n + g_+(a_n) + g_-(b_n) + a_n b_n/n ) \right) ,
\end{align*}
where $(W^{(-,n)}, W^{(+,n)})$ is distributed according to $F_n$ and $\mathcal{K}_n$ is bounded. Moreover, this bound converges to zero as $n \to \infty$ for the same choice of $a_n, b_n, c_n, s_n$ used in the proof of Theorem~\ref{T.CouplingFinal}.  Finally, Lemma~\ref{L.TotalVariationDegrees} gives that
\begin{align*}
\sup_{A \subseteq \mathbb{N}^2} \left| \mathbb{P}_{n}((\hat N_\emptyset, \hat D_\emptyset) \in A) - P( (\mathcal{N}_0, \mathcal{D}_0) \in A) \right| \leq \frac{\Delta_n^2}{\theta} + 2\Delta_n \xrightarrow{P} 0
\end{align*}
as $n \to \infty$. We conclude that
$$\sup_{A \subseteq \mathbb{N}^2} \left| \mathbb{P}_n( (D_\xi^-, D_\xi^+) \in A) - P( (Z^-, Z^+) \in A) \right|  \xrightarrow{P} 0 \qquad n \to \infty.$$

To obtain the convergence of the means, let $(W^{(-,n)}, W^{(+,n)})$ and $(\hat W^{(-,n)}, \hat W^{(+,n)})$ be conditionally  i.i.d.~(given $\mathscr{F}_n$) vectors have distribution $F_n$ and note that
\begin{align*}
\left| \mathbb{E}_n[ D_\xi^\pm] - E[Z^\pm] \right| &\leq \left| \frac{1}{n} \sum_{i=1}^n \sum_{1\leq j \leq n, j \neq i} \left(p_{ij}^{(n)} - (r_{ij}^{(n)} \wedge 1) \right) \right| + \frac{1}{n} \sum_{i=1}^n \sum_{1\leq j \leq n, j \neq i}  r_{ij}^{(n)} 1( r_{ij}^{(n)} > 1)   \\
&\hspace{5mm} + \left| \frac{1}{n} \sum_{i=1}^n \sum_{1\leq j \leq n, j \neq i}  r_{ij}^{(n)}  - E[Z^\pm] \right| \\
&\leq \mathcal{E}_n + \frac{1}{\theta} \mathbb{E}_n\left[ W^{(+,n)} \hat W^{(-,n)} 1( W^{(+,n)} \hat W^{(-,n)} > \theta n) \right] \\
&\hspace{5mm} +\frac{1}{\theta} \left| \mathbb{E}_n[ W^{(-,n)}] \mathbb{E}_n[ W^{(+,n)}] - E[W^-] E[W^+] \right|,
\end{align*}
where $\mathcal{E}_n \xrightarrow{P} 0$ by Assumption~\ref{A.Types}(c) and $\mathbb{E}_n[ W^{(-,n)}] \mathbb{E}_n[ W^{(+,n)}]  \xrightarrow{P} E[W^+] E[W^-]$ by Assumption~\ref{A.Types}(b). For the middle term note that
\begin{align*}
& \mathbb{E}_n\left[ W^{(+,n)} \hat W^{(-,n)} 1( W^{(+,n)} \hat W^{(-,n)} > \theta n) \right] \\
&\leq \mathbb{E}_n\left[ W^{(+,n)} \hat W^{(-,n)} \left( 1( W^{(+,n)}  > \sqrt{\theta n}) + 1( \hat W^{(-,n)}  > \sqrt{\theta n}) \right) \right] \\
&= \mathbb{E}_n[W^{(-,n)}]  \mathbb{E}_n\left[ W^{(-,n)} 1( W^{(-,n)}  > \sqrt{\theta n})  \right] + \mathbb{E}_n[W^{(+,n)}]  \mathbb{E}_n\left[ W^{(+,n)} 1( W^{(+,n)}  > \sqrt{\theta n})  \right] ,
\end{align*}
which also converges in probability to zero as $n \to \infty$ since $E[ W^- + W^+] < \infty$. 

The result for the mixed expectation is a consequence of Assumption~\ref{A.Types} (d) since
$$\mathbb{E}_n\left[ D_\xi^- D_\xi^+ \right] = \frac{1}{n} \sum_{i=1}^n \sum_{1 \leq j \leq n, j \neq i} \sum_{1 \leq k \leq n, k \neq i} p_{ji}^{(n)} p_{ik}^{(n)} \xrightarrow{P} \frac{E[W^- W^+] E[W^-] E[W^+]}{\theta^2} = E[ Z^- Z^+],$$
as $n \to \infty$. This completes the proof. 
\end{proof}

\subsection{Proof of Theorem~\ref{T.SFPEConvergence}} \label{S.WBPconvergenceProofs}

In this section we prove Theorem~\ref{T.SFPEConvergence}, which establishes the convergence of $\hat R_\emptyset^{(n,k_n)}$ to the attracting endogenous solution to \eqref{eq:SFPE}, under Assumption~\ref{A.ExtendedTypes}. The main step in the proof of Theorem~\ref{T.SFPEConvergence} consists in showing that the vectors $\{ (\hat N_{\bf i}, \hat Q_{\bf i}, \hat C_{\bf i}): {\bf i} \in \mathcal{U} \}$ converge, in the Kantorovich-Rubinstein metric, to the distribution of $(\mathcal{N}, \mathcal{Q}, \mathcal{C})$ defined in Theorem~\ref{T.MainPageRank}, with $\mathcal{C}$ independent of $(\mathcal{N}, \mathcal{Q})$. To simplify the proof of Theorem~\ref{T.SFPEConvergence}, we show this convergence separately.

Throughout the section, for probability measures $\phi, \chi$ in $\mathbb{R}^d$, we interchangeably use the notation $d_1(\phi, \chi) = d_1(F, G)$ to denote the Kantorovich-Rubinstein distance between $\phi$ and $\chi$, where $F$ and $G$ are the cumulative distribution functions of $\phi$ and $\chi$, respectively.

\begin{theo} \label{T.KRconvergence}
Define $G_n^*(m,q) = \mathbb{P}_{n}( \hat N_\emptyset \leq m, \, \hat Q_\emptyset \leq q)$ and $G_n(m,q,t) = \mathbb{P}_{n}( \hat N_{\bf i} \leq m, \, \hat Q_{\bf i} \leq q, \, \hat C_{\bf i} \leq t)$ for ${\bf i} \neq \emptyset$, according to \eqref{eq:RootVector} and \eqref{eq:TypicalVector}, respectively. Define $G^*(m,q) = P(\mathcal{N}_0 \leq m, \mathcal{Q}_0 \leq q)$ and $G(m,q,t) = P(\mathcal{N} \leq m, \mathcal{Q} \leq q, \mathcal{C} \leq t)$, for $m \in \mathbb{N}$, $q,t \in \mathbb{R}$, according to:
\begin{align*}
P(\mathcal{N}_0 = m, \mathcal{Q}_0 \in dq) &= E\left[1\left(Q \in dq \right) p(m; E[W^+] W^-/\theta) \right],  \\
P(\mathcal{N} = m, \mathcal{Q} \in dq, \mathcal{C} \in dt) &= E\left[ \frac{W^+ }{E[W^+]}  1\left(Q \in dq, \zeta/(Z^++1) \in dt \right) p(m; E[W^+] W^-/\theta)  \right], 
\end{align*}
where $Z^+$ is a mixed Poisson random variable with mixing parameter $E[W^-] W^+/\theta$ and $p(m;\lambda) = e^{-\lambda} \lambda^m/m!$. Then, under Assumption~\ref{A.ExtendedTypes} (a)-(b), we have that
$$d_1(G_n^*,, G^*) \xrightarrow{P} 0 \qquad \text{and} \qquad (\hat N_1, \, \hat Q_1, \hat C_1)  \Rightarrow (\mathcal{N}, \mathcal{Q}, \mathcal{C})$$
as $n \to \infty$. Moreover, if Assumption~\ref{A.ExtendedTypes} (a), (b), (e) hold, then
$$d_1(G_n, G) \xrightarrow{P} 0 \quad \text{as} n \to \infty.$$
\end{theo}

\begin{proof}
We start by showing the weak convergence of the vectors $(\hat N_\emptyset, \hat Q_\emptyset)$ and $(\hat N_1, \hat D_1, \hat Q_1, \hat \zeta_1)$; recall that $\hat C_1 = \hat \zeta_1/(\hat D_1 + 1)$. Let $( W^{(-,n)}, W^{(+,n)}, Q^{(n)}, \zeta^{(n)})$ be distributed according to $H_n$ and define $(\bar W^{(-,n)}, \bar W^{(+.n)}) = (W^{(-,n)} \wedge b_n, W^{(+,n)} \wedge a_n)$. Now note that if $f: \mathbb{N} \times \mathbb{R} \to \mathbb{R}$ is bounded and continuous, then the function $J(x,q) = \sum_{m=0}^\infty f(m,q) p(m; x)$ is also bounded and continuous, and by Assumption~\ref{A.ExtendedTypes} (a)-(b) we have
\begin{align*}
\mathbb{E}_n\left[ f(\hat N_\emptyset, \hat Q_\emptyset) \right] &= \frac{1}{n} \sum_{s=1}^n \sum_{m=0}^\infty f(m, Q_s) p(m; \Lambda_n^+ {\bar W}_s^-/(\theta n)) \\
&= \mathbb{E}_n\left[ J\left( Q^{(n)}, \Lambda_n^+ \bar W^{(-,n)} /(\theta n) \right) \right] \\
&\xrightarrow{P} E\left[ J(Q, E[W^+] W^-/\theta) \right] = E[ f(\mathcal{N}_0, \mathcal{Q}_0)],
\end{align*}
as $n \to \infty$. For the second vector let $g: \mathbb{N} \times \mathbb{R}^2 \to \mathbb{R}$ be a bounded and continuous function and note that $J_M(x,y,q,z) = (y \wedge M) \sum_{m=0}^\infty \sum_{k=0}^\infty p(m;x) p(k;y) g(m,q,z/(k+1))$  is bounded and continuous for any fixed $M > 0$. Then,
\begin{align*}
\mathbb{E}_n\left[ g(\hat N_1, \hat Q_1, \hat C_1) \right] &= \sum_{s=1}^n \frac{{\bar W}_s^+}{\Lambda_n^+} \sum_{m=0}^\infty \sum_{k=0}^\infty p(m; \Lambda_n^+ {\bar W}^-_s/(\theta n)) p(k; \Lambda_n^- {\bar W}^+_s/(\theta n))  g( m, Q_s, \zeta_s/(k + 1) ) \\
&\leq \frac{\theta n^2}{\Lambda_n^+ \Lambda_n^-} \mathbb{E}_n\left[ J_M\left( \Lambda_n^+ \bar W^{(-,n)}/(\theta n), \Lambda_n^- \bar W^{(+,n)}/(\theta n), Q^{(n)}, \zeta^{(n)} \right) \right]  \\
&\hspace{5mm} + \frac{n}{\Lambda_n^+} \mathbb{E}_n\left[ (\bar W^{(+,n)}  - M)^+ \right] \sup_{m,q,z}|g(m,q,z)| ,
\end{align*}
and, similarly,
\begin{align*}
\mathbb{E}_n\left[ g(\hat N_1, \hat Q_1, \hat C_1) \right]  &\geq \frac{\theta n^2}{\Lambda_n^+ \Lambda_n^-} \mathbb{E}_n\left[ J_M\left( \Lambda_n^+ \bar W^{(-,n)} /(\theta n), \Lambda_n^- \bar W^{(+,n)} /(\theta n), Q^{(n)}, \zeta^{(n)} \right) \right]  \\
&\hspace{5mm} - \frac{n}{\Lambda_n^+} \mathbb{E}_n\left[ (\bar W^{(+,n)}  - M)^+ \right] \sup_{m,q,z}|g(m,q,z)|  .
\end{align*}
It follows from Assumption~\ref{A.ExtendedTypes} (a)-(b) that the limits
\begin{align*}
&\frac{\theta}{E[W^+] E[W^-]} E\left[ J_M(E[W^+] W^-/\theta, E[W^-] W^+/\theta, Q, \zeta) \right] - \frac{E[ (W^+ - M)^+]}{E[W^+]}  \sup_{m,q,z}|g(m,q,z)|   \\
&\leq \liminf_{n \to \infty} \mathbb{E}_n\left[ g(\hat N_1, \hat Q_1, \hat C_1) \right]  \leq \limsup_{n \to \infty} \mathbb{E}_n\left[ g(\hat N_1, \hat Q_1, \hat C_1) \right]  \\
&\leq \frac{\theta}{E[W^+] E[W^-]} E\left[ J_M(E[W^+] W^-/\theta, E[W^-] W^+/\theta, Q, \zeta) \right] + \frac{E[ (W^+ - M)^+]}{E[W^+]}  \sup_{m,q,z}|g(m,q,z)| 
\end{align*}
hold in probability. Taking the limit as $M \to \infty$ now yields (via the dominated convergence theorem)
\begin{align*}
\mathbb{E}_n\left[ g(\hat N_1, \hat Q_1, \hat C_1) \right]  &\xrightarrow{P} \frac{\theta}{E[W^+] E[W^-]} \lim_{M \to \infty} E\left[ J_M (E[W^+] W^-/\theta, E[W^-] W^+/\theta, Q, \zeta) \right] \\
&= E\left[ \frac{W^+}{E[W^+]} \sum_{m=0}^\infty \sum_{k=0}^\infty p(m; E[W^+] W^-/\theta) p(k; E[W^-] W^+/\theta) g(m, Q, \zeta/(k+1)) \right] \\
&= E\left[ g(\mathcal{N}, \mathcal{Q}, \mathcal{C}) \right]. 
\end{align*}
This establishes the weak convergence for both $G_n^*$ and $G_n$. 

To prove the convergence in the Kantorovich-Rubinstein distance recall that it suffices to show that the first absolute moments converge (see Theorem~6.9 and Definition~6.8(i) in \cite{Villani_2009}). Under Assumption~\ref{A.ExtendedTypes} (a)-(b) we have that
\begin{align*}
\mathbb{E}_n\left[ \hat N_\emptyset + |\hat Q_\emptyset| \right] &= \mathbb{E}_n\left[ \frac{\Lambda_n^+ \bar W^{(-,n)} }{\theta n} + |Q^{(n)} | \right]  \xrightarrow{P} E\left[ \frac{E[W^+] W^-}{\theta} + |Q| \right] = E\left[ \mathcal{N}_0 + |\mathcal{Q}_0| \right]
\end{align*}
as $n \to \infty$. We conclude that $d_1(G_n^*, G^*) \xrightarrow{P} 0$ as $n \to \infty$. 

For $G_n$ we have that
\begin{align*}
&\mathbb{E}_n\left[ \hat N_1 + |\hat Q_1| + |\hat C_1| \right] \\
&= \mathbb{E}_n\left[ \hat N_1 + |\hat Q_1| + \frac{|\hat \zeta_1|}{\hat D_1 + 1} \right] \\
&= \mathbb{E}_n\left[  \frac{n \bar W^{(+,n)} }{\Lambda_n^+}   \left( \frac{\Lambda_n^+ \bar W^{(-,n)} }{\theta n} + |Q^{(n)}|  + \frac{|\zeta^{(n)}|}{\Lambda_n^- \bar W^{(+,n)} /(\theta n)} \left(1 - e^{-\Lambda_n^- \bar W^{(+,n)} /(\theta n)} \right) \right) \right] \\
&= \mathbb{E}_n\left[  \frac{ \bar W^{(+,n)}  \bar W^{(1,n)} }{\theta } + \frac{n \bar W^{(+,n)} |Q^{(n)}|}{\Lambda_n^+}  + \frac{\theta n^2 |\zeta^{(n)}|}{\Lambda_n^- \Lambda_n^+} \left(1 - e^{-\Lambda_n^- \bar W^{(+,n)}/(\theta n)} \right) \right] ,
\end{align*}
where we used the observation that if $Z$ is Poisson with mean $\lambda$ then $E[1/(Z+1)] = \frac{1}{\lambda} (1 - e^{-\lambda})$. The third summand inside the expectation converges under Assumption~\ref{A.ExtendedTypes} (a)-(b), however, the first two require part (e) of the assumption. Hence, under Assumption~\ref{A.ExtendedTypes} (a),(b),(e) we have
\begin{align*}
\mathbb{E}_n\left[ \hat N_1 + |\hat Q_1| + |\hat C_1| \right] &\xrightarrow{P} E\left[ \frac{W^+ W^-}{\theta} + \frac{W^+ |Q|}{E[W^+]} + \frac{\theta |\zeta|}{E[W^-] E[W^+]} \left( 1 - e^{-E[W^-] W^+/\theta} \right) \right] \\
&= E\left[ \frac{W^+}{E[W^+]} \left( \frac{E[W^+] W^-}{\theta} + |Q| + \frac{|\zeta|}{E[W^-] W^+/\theta} \left( 1 - e^{-E[W^-] W^+/\theta} \right) \right) \right] \\
&= E\left[ \mathcal{N} + |\mathcal{Q}| + |\mathcal{C}| \right]
\end{align*}
as $n \to \infty$. This completes the proof. 
\end{proof}

The proof of Theorem~\ref{T.SFPEConvergence} will now follow from Theorem~2 in  \cite{chenolvera1}.

\begin{proof}[Proof of Theorem~\ref{T.SFPEConvergence}]
Recall that the sequence $\{ (\hat N_{\bf i}, \hat Q_{\bf i}, \hat C_{\bf i}): {\bf i} \in \mathcal{U}, {\bf i} \neq \emptyset \}$ consists of conditionally i.i.d.~vectors, given $\mathscr{F}_n$, with $(\hat N_\emptyset, \hat Q_\emptyset, \hat C_\emptyset)$ conditionally independent of this sequence. To simplify the notation let $(\hat Q, \hat N, \hat C) \stackrel{\mathcal{D}}{=} (\hat Q_1, \hat N_1, \hat C_1)$. Define $\phi_n$ to be the probability measure of the vector 
$$(\hat C \hat Q, \hat C 1(\hat N \geq 1), \hat C 1(\hat N \geq 2), \dots)$$
and let $\phi_n^*$ denote the probability measure of $(\hat Q_\emptyset, \hat N_\emptyset)$. Similarly, define $\phi$ and $\phi^*$ to be the probability measures of vectors
$$(\mathcal{C} \mathcal{Q}, \mathcal{C} 1(\mathcal{N} \geq 1), \mathcal{C} 1(\mathcal{N} \geq 2), \dots) \qquad \text{and} \qquad (\mathcal{Q}, \mathcal{N}),$$
respectively, where $\mathcal{C}$ and $(\mathcal{N}, \mathcal{Q})$ are distributed as in Theorem~\ref{T.KRconvergence}, with $\mathcal{C}$ independent of $(\mathcal{Q}, \mathcal{N})$.

Next, let $d_1$ denote the Kantorovich-Rubinstein distance on $\mathcal{S}$, with $\mathcal{S}$ either $\mathbb{R}^\infty$ or $\mathbb{R}^2$ as needed, defined conditionally on $\mathscr{F}_n$. More precisely, if we let $\| {\bf x} \|_1 = \sum_i |x_i|$ for ${\bf x} \in \mathcal{S}$, then, for any two probability measures $\phi$ and $\chi$ on $\mathbb{R}^\infty$, 
$$d_1(\phi, \chi) = \inf_{{\bf U}, {\bf V}} \mathbb{E}_{n} \left[ \| {\bf U} - {\bf V} \|_1 \right],$$
where ${\bf U}$ is distributed according to $\phi$ and ${\bf V}$ is distributed according to $\chi$, and the infimum is taken over all couplings of $\phi$ and $\chi$. 

Let $\mathcal{R}^{(k)} = \sum_{r=0}^k \sum_{{\bf i} \in \mathcal{A}_r} \Pi_{\bf i} \mathcal{Q}_{\bf i} $ denote the rank of the root node in the delayed weighted branching process constructed using the i.i.d.~vectors $\{ (\mathcal{N}_{\bf i}, \mathcal{Q}_{\bf i}, \{ \mathcal{C}_{({\bf i}, j)}\}_{j \geq 1}): {\bf i} \in \mathcal{U}\}$ (see \cite{chenolvera1} for more details). By Theorem~2  (Case 2) in \cite{chenolvera1}, the convergence of $\hat R_\emptyset^{(n,k)}$ to $\mathcal{R}^{(k)}$ in the Kantorovich-Rubinstein distance will follow once we show that
\begin{equation} \label{eq:KantorovichConv}
d_1(\phi_n^*, \phi^*) + d_1(\phi_n, \phi) \xrightarrow{P} 0  \quad \text{as $n \to \infty$}.
\end{equation}
That convergence in $d_1$ is equivalent to weak convergence plus convergence of the first absolute moments follows from Theorem~6.9 in \cite{Villani_2009}.

Now let $G_n^*(m,q) = \mathbb{P}_{n}(\hat N_\emptyset \leq m, \hat Q_\emptyset \leq q)$, $G_n(m,q,x) = \mathbb{P}_{n}(\hat N \leq m, \hat Q \leq q, \hat C \leq x)$, $G^*(m,q) = P(\mathcal{N} \leq m, \mathcal{Q} \leq q)$, and  $G(m,q,x) = P(\mathcal{N} \leq m, \mathcal{Q} \leq q)P(\mathcal{C} \leq x)$. Note that by Theorem~\ref{T.KRconvergence} we have
$$d_1(G_n, G) + d_1(G_n^*, G^*)  \xrightarrow{P} 0 \qquad n \to \infty.$$
Moreover, $d_1(G_n^*, G^*) = d_1(\phi_n^*, \phi^*)$. To see that $d_1(G_n, G) \xrightarrow{P} 0$ implies that $d_1(\phi_n, \phi) \xrightarrow{P} 0$, choose $(\hat N, \hat Q, \hat C, \mathcal{N}, \mathcal{Q}, \mathcal{C})$ and $(\hat N_\emptyset, \hat Q_\emptyset, \hat C_\emptyset, \mathcal{N}, \mathcal{Q}, \mathcal{C})$ such that $\mathbb{E}_{n}\left[ \| (\hat N, \hat Q, \hat C) - (\mathcal{N}, \mathcal{Q}, \mathcal{C}) \|_1 \right] = d_1(G_n, G)$, which can be done since optimal couplings always exist (see Theorem~4.1 in \cite{Villani_2009}). 
Next, note that since $|\hat C| \leq c $ and $|\mathcal{C}| \leq c$ with $c < 1$, 
\begin{align*}
d_1(\phi_n, \phi) &\leq \mathbb{E}_{n} \left[ \|  (\hat C \hat Q, \hat C 1(\hat N \geq 1), \hat C 1(\hat N \geq 2), \dots) - (\mathcal{C} \mathcal{Q}, \mathcal{C} 1(\mathcal{N} \geq 1), \mathcal{C} 1(\mathcal{N} \geq 2), \dots) \|_1  \right]  \\
&= \mathbb{E}_{n} \left[ |\hat C \hat Q -  \mathcal{C} \mathcal{Q}| + \sum_{i=1}^\infty | \hat C 1(\hat N \geq i) - \mathcal{C} 1(\mathcal{N} \geq i) |  \right] \\
&\leq \mathbb{E}_{n} \left[ |\hat C| |\hat Q - \mathcal{Q}|  + |\mathcal{Q} ||\hat C - \mathcal{C}| + \sum_{i=1}^\infty |\hat C| |  1(\hat N \geq i) -  1(\mathcal{N} \geq i)| + | \hat C  -  \mathcal{C}  | 1(\mathcal{N} \geq i) \right] \\
&\leq  c\mathbb{E}_{n} \left[ |\hat Q - \mathcal{Q}| \right]  + \mathbb{E}_{n}[ |\mathcal{Q} ||\hat C - \mathcal{C}|] + c \sum_{i=1}^\infty \mathbb{E}_{n} \left[ |  1(\hat N \geq i) -  1(\mathcal{N} \geq i)| \right] \\
&\hspace{5mm} + \mathbb{E}_{n} \left[ | \hat C  -  \mathcal{C}  |  \sum_{i=1}^\infty  1(\mathcal{N} \geq i) \right] \\
&= c\mathbb{E}_{n} \left[ |\hat Q - \mathcal{Q}| \right]  + c \sum_{i=1}^\infty \mathbb{E}_{n} \left[ |  1(\hat N < i \leq \mathcal{N}) - 1(\mathcal{N} < i \leq \hat N) | \right] \\
&\hspace{5mm} + \mathbb{E}_{n}  \left[ | \hat C  -  \mathcal{C}  |  (\mathcal{N} +|\mathcal{Q}|) \right] \\
&\leq c \mathbb{E}_{n} \left[ |\hat Q - \mathcal{Q}| \right]  + c \mathbb{E}_{n} \left[ |  \hat N -\mathcal{N} | \right] + \mathbb{E}_{n}  \left[ | \hat C  -  \mathcal{C}  | ( \mathcal{N} + |\mathcal{Q}|) \right] \\
&\leq c d_1(G_n, G) + \mathbb{E}_{n}  \left[ | \hat C  -  \mathcal{C}  | ( \mathcal{N} + |\mathcal{Q}|) \right] . 
\end{align*}
Since $\mathbb{E}_{n}[ |\hat C - \mathcal{C}|] \leq d_1(G_n, G) \xrightarrow{P} 0$, then dominated convergence gives that $\mathbb{E}_{n}  \left[ | \hat C  -  \mathcal{C}  |  (\mathcal{N} +|\mathcal{Q}|) \right] \xrightarrow{P} 0$ as well. 

Finally, it is well known that provided $E\left[ \sum_{i=1}^{\mathcal{N}} |\mathcal{C}_i| \right] = E[\mathcal{N}] E[\mathcal{C}_1] < 1$, we have $\mathcal{R}^{(k)} \to \mathcal{R} = \sum_{r=0}^\infty \sum_{{\bf i} \in \mathcal{A}_r} \Pi_{\bf i} \mathcal{Q}_{\bf i} $ a.s.~as $k \to \infty$ (see, e.g., Lemma~4.1 in \cite{Jel_Olv_12}). To see that the required condition is satisfied note that
\begin{align*}
E[ \mathcal{N} ] E[ |\mathcal{C}_1|] &= E\left[ \frac{W^+}{E[W^+]} \cdot \frac{E[W^+] W^-}{\theta} \right] E\left[ \frac{W^+}{E[W^+]}    \int_{-\infty}^\infty |t| 1(\zeta/(Z^++1) \in dt) \right] \\
&= \frac{E[ W^+ W^-]}{\theta} \cdot E\left[ \frac{W^+}{E[W^+]} \frac{|\zeta|}{Z^+ + 1} \right] \\
&= \frac{E[W^-]}{\theta} \cdot E\left[ \frac{ W^+|\zeta| }{E[W^-] W^+/\theta} \left( 1- e^{-E[W^-] W^+/\theta} \right) \right] \\
&=  E\left[ |\zeta| \left(1 - e^{-E[W^-]W^+/\theta} \right) \right] \leq c < 1,
\end{align*}
where we used the observation that if $Z$ is Poisson with mean $\lambda$ then $E[1/(Z+1)] = \lambda^{-1} ( 1 - e^{-\lambda})$. This completes the proof. 
\end{proof}

\subsection{Proof of Theorem~\ref{T.MainPageRank}}

Finally, we combine the inequality in \eqref{eq:Local}, Theorem~\ref{T.CouplingFinal}, and Theorem~\ref{T.SFPEConvergence} to prove Theorem~\ref{T.MainPageRank}.

\begin{proof}
Fix $\epsilon > 0$ and $k \in \mathbb{N}_+$. Next, construct $(R_\xi^{(n,\infty)}, R_\xi^{(n,k)}, \hat R_\emptyset^{(n,k)})$ using the graph exploration and coupling described in Section~\ref{SS.Coupling}. Then, by \eqref{eq:Local} and Theorem~\ref{T.CouplingFinal}, the following limit holds in probability:
\begin{align*}
&\lim_{n \to \infty} \mathbb{P}_n\left( |R_\xi^{(n,\infty)} - \hat R_\emptyset^{(n,k)}| > \epsilon \right) \\
&\leq \lim_{n \to \infty} \mathbb{P}_n\left( |R_\xi^{(n,\infty)} -  R_\xi^{(n,k)}| > \epsilon/2 \right)  +  \lim_{n \to \infty} \mathbb{P}_n\left( |\hat R_\xi^{(n,k)} - \hat R_\emptyset^{(n,k)}| > \epsilon/2 \right)  \\
&= \frac{2 c^{k}}{\epsilon(1-c)} E[ |Q|] .
\end{align*}
Also, by Theorem~\ref{T.SFPEConvergence} there exists $\mathcal{R}^{(k)}$ such that 
$$\lim_{n \to \infty} \mathbb{P}_n\left( |\hat R_\emptyset^{(n,k)} - \mathcal{R}^{(k)}| > \epsilon \right)  = 0$$
in probability, with $\mathcal{R}^{(k)} \to \mathcal{R}$ a.s.~as $k \to \infty$. Choosing $k$ sufficiently large yields
$$R_\xi^{(n,\infty)} \Rightarrow \mathcal{R}$$
as $n \to \infty$. 
\end{proof}


 \appendix

 \section{Verification of Assumption~\ref{A.Types} for i.i.d.~sequences}
\label{Appx.IIDsequences}

Let $\{ {\bf W}_i \}$ be an i.i.d.~sequence of nonnegative vectors having the same distribution as ${\bf W} = (W^-, W^+)$. Suppose throughout the section that $E[W^- + W^+ + W^- W^+ ] < \infty$. We will now verify that such a sequence always satisfies Assumption~\ref{A.Types} for each of the last three random digraph models in Example~\ref{E.IRG}.

Note that Assumption~\ref{A.Types} (a) and (b) are always satisfied, with almost sure convergence, by the strong law of large numbers, and they only require $E[ W^- + W^+] < \infty$.

Therefore, it suffices to verify Assumption~\ref{A.Types} (c) and (d). 

\bigskip

$\bullet$ {\bf Directed Chung-Lu model:}

For part (c) we have
\begin{align*}
\mathcal{E}_n^{CL} &= \frac{1}{n} \sum_{i=1}^n \sum_{1 \leq j \leq n, j \neq i} \left| \frac{W_i^+ W_j^-}{L_n} \wedge 1 - \frac{W_i^+ W_j^-}{\theta n} \wedge 1 \right|  \leq  \frac{1}{n} \sum_{i=1}^n \sum_{j=1}^n \left| \frac{W_i^+ W_j^-}{L_n} - \frac{W_i^+ W_j^-}{\theta n} \right| \\
&= \frac{1}{\theta}  \cdot \frac{\theta n}{L_n} \cdot  \left| \frac{L_n}{\theta n} - 1 \right| \cdot \frac{1}{n} \sum_{i=1}^n W_i^+ \cdot \frac{1}{n} \sum_{j=1}^n W_j^-.
\end{align*}
Since the strong law of large numbers gives $L_n/(\theta n) \to 1$ a.s.~as $n \to \infty$, we conclude that $\mathcal{E}_n^{CL} \to 0$ a.s.~as $n \to \infty$. 

For part (d) note that we have
\begin{align} \label{eq:MixedMomentUB}
\frac{1}{n}  \sum_{i=1}^n \sum_{1 \leq j \leq n, j \neq i} \sum_{1 \leq k \leq n, k \neq i} p_{ji}^{(n)} p_{ik}^{(n)} &\leq \frac{(\theta n)^2}{L_n^2} \cdot \frac{1}{n} \sum_{i=1}^n \sum_{j=1}^n \sum_{k=1}^n  r_{ji}^{(n)} r_{ik}^{(n)} \\
&\rightarrow \frac{E[W^-] E[W^- W^+] E[W^+]}{\theta^2} \quad \text{a.s.} \notag
\end{align}
as $n \to \infty$. To obtain a lower bound let $(W^{(-,n)}, W^{(+,n)})$, $\hat W^{(-,n)}$, and $\tilde W^{(+,n)}$ be conditionally independent (given $\mathscr{F}_n$) random vectors/variables having distributions $F_n(x,y) = n^{-1} \sum_{i=1}^n 1(W_i^- \leq x, W_i^+ \leq y)$, $F_n^-(x) = n^{-1} \sum_{i=1}^n 1(W_i^- \leq x)$, and $F_n^+(x) = n^{-1} \sum_{i=1}^n 1(W_i^+ \leq x)$, respectively. Now note that we can write
\begin{align*}
&\frac{1}{n}  \sum_{i=1}^n \sum_{1 \leq j \leq n, j \neq i} \sum_{1 \leq k \leq n, k \neq i} p_{ji}^{(n)} p_{ik}^{(n)}  \\
&\geq \frac{1}{n} \sum_{i=1}^n \sum_{j=1}^n \sum_{k=1}^n p_{ji}^{(n)} p_{ik}^{(n)} - \frac{1}{n}  \sum_{i=1}^n p_{ii}^{(n)} \left(  \sum_{j=1}^n  p_{ji}^{(n)}  +   \sum_{1 \leq k \leq n, k \neq i}  p_{ik}^{(n)} \right) \\
&\geq \mathbb{E}_n \left[ \left( \frac{\tilde W^{(+,n)} W^{(-,n)}}{L_n/n} \wedge n  \right) \left( \frac{W^{(+,n)} \hat W^{(-,n)}}{L_n/n} \wedge n  \right) \right] \\
&\hspace{5mm} -  \frac{n}{L_n} \cdot \mathbb{E}_n \left[ \left( \frac{W^{(+,n)} W^{(-,n)}}{L_n} \wedge 1 \right) \left( \hat W^{(+,n)} W^{(-,n)} + W^{(+,n)} \tilde W^{(-,n)} \right)       \right].
\end{align*}
Now note that the observation that Assumption~\ref{A.Types} (a) and (b) are satisfied with almost sure convergence, implies that $\Delta_n = d_1(F_n,F) \to 0$ a.s.~as $n \to \infty$, where $F(x,y) = P(W^- \leq x, W^+ \leq y)$. This implies that 
$$ \mathbb{E}_n \left[ \left( \frac{\tilde W^{(+,n)} W^{(-,n)}}{L_n/n} \wedge n  \right) \left( \frac{W^{(+,n)} \hat W^{(-,n)}}{L_n/n} \wedge n  \right) \right] \rightarrow \frac{E[ W^+] E[W^+W^-] E[W^-]}{\theta^2} \quad \text{a.s.}$$
as $n \to \infty$. To see that the second expectation converges to zero note that
\begin{align*}
R_n &\triangleq \mathbb{E}_n \left[ \left( \frac{W^{(+,n)} W^{(-,n)}}{L_n} \wedge 1 \right) \left( \hat W^{(+,n)} W^{(-,n)} + W^{(+,n)} \tilde W^{(-,n)} \right)       \right] \\
&\leq \mathbb{E}_n \left[ 1(W^{(+,n)} W^{(-,n)} > \sqrt{n})  \left( \hat W^{(+,n)} W^{(-,n)} + W^{(+,n)} \tilde W^{(-,n)} \right)      \right]  \\
&\hspace{5mm} + \frac{\sqrt{n}}{L_n} \mathbb{E}_n \left[ \hat W^{(+,n)} W^{(-,n)} + W^{(+,n)} \tilde W^{(-,n)}       \right]  \\
&= \mathbb{E}_n[ W^{(+,n)}] \mathbb{E}_n\left[ 1(W^{(+,n)} W^{(-,n)} > \sqrt{n}) W^{(-,n)} \right] \\
&\hspace{5mm} +  \mathbb{E}_n[ W^{(-,n)}] \mathbb{E}_n\left[ 1(W^{(+,n)} W^{(-,n)} > \sqrt{n}) W^{(+,n)} \right]  + \frac{2\sqrt{n}}{L_n}  \mathbb{E}_n[ W^{(+,n)}]   \mathbb{E}_n[ W^{(-,n)}] \\
&\rightarrow 0 \quad \text{a.s.},
\end{align*}
as  $n \to \infty$. This completes the lower bound for part (d).

\bigskip

$\bullet$ {\bf Directed generalized random graph:}

For part (c) note that we have
\begin{align*}
\mathcal{E}_n^{GRG} &= \frac{1}{n} \sum_{i=1}^n \sum_{1 \leq j \leq n, j \neq i}  \left| \frac{W_i^+ W_j^-}{L_n + W_i^+ W_j^-}  - \frac{W_i^+ W_j^-}{\theta n} \wedge 1 \right| \\
&\leq \frac{1}{n} \sum_{i=1}^n \sum_{j=1}^n   \left(    \frac{W_i^+ W_j^-}{L_n} \wedge 1 - \frac{W_i^+ W_j^-}{L_n + W_i^+ W_j^-} \right)  + \mathcal{E}_n^{CL} \\
&\leq \frac{n}{L_n} \cdot \frac{1}{n} \sum_{i=1}^n W_i^+ \cdot \frac{1}{n} \sum_{j=1}^n W_j^- - \frac{1}{n} \sum_{i=1}^n \sum_{j=1}^n \frac{W_i^+ W_j^-}{L_n+ W_i^+ W_j^-}  + \mathcal{E}_n^{CL} \\
&= \frac{n}{L_n} \mathbb{E}_n[ W^{(+,n)}] \mathbb{E}_n[ \hat W^{(-,n)}] - \mathbb{E}_n\left[ \frac{W^{(+,n)} \hat W^{(-,n)}}{L_n/n + W^{(-,n)} \hat W^{(+,n)}/n} \right] + \mathcal{E}_n^{CL},
\end{align*}
where $W^{(+,n)}$ and $\hat W^{(-,n)}$ are conditionally independent  (given $\mathscr{F}_n$) random variables having distributions $F^+_n$ and $F^-_n$, respectively. Since both expectations converge a.s.~to $E[W^+] E[W^-]/\theta$ and  $\mathcal{E}_n^{CL} \to 0$ a.s., then $\mathcal{E}_n^{GRG} \to 0$ a.s.~as $n \to \infty$. 

For part (d) we again have that \eqref{eq:MixedMomentUB} holds, so we only need a lower bound. Following similar steps as the ones used for the Chung-Lu model, we obtain
\begin{align*}
&\frac{1}{n}  \sum_{i=1}^n \sum_{1 \leq j \leq n, j \neq i} \sum_{1 \leq k \leq n, k \neq i} p_{ji}^{(n)} p_{ik}^{(n)}  \\
&\geq \mathbb{E}_n \left[  \frac{\tilde W^{(+,n)} W^{(-,n)}}{L_n/n +\tilde W^{(+,n)} W^{(-,n)}/n } \cdot \frac{W^{(+,n)} \hat W^{(-,n)}}{L_n/n + W^{(+,n)} \hat W^{(-,n)}/n} \right] -  \frac{n}{L_n} \cdot R_n.
\end{align*}
The convergence of $d_1(F_n, F)$ now gives
$$ \mathbb{E}_n \left[  \frac{\tilde W^{(+,n)} W^{(-,n)}}{L_n/n +\tilde W^{(+,n)} W^{(-,n)}/n } \cdot \frac{W^{(+,n)} \hat W^{(-,n)}}{L_n/n + W^{(+,n)} \hat W^{(-,n)}/n} \right] \rightarrow \frac{E[ W^+] E[W^+W^-] E[W^-]}{\theta^2} \quad \text{a.s.}$$
as $n \to \infty$ and we already showed that $R_n \to 0$ a.s.~as $n \to \infty$.  This completes the lower bound for part (d).

\bigskip

$\bullet$ {\bf Directed Norros-Reittu model:}

For part (c) we can use the inequality $1 - e^{-x} \leq x \wedge 1$ for $x > 0$ to obtain that
\begin{align*}
\mathcal{E}_n^{NR} &= \frac{1}{n} \sum_{i=1}^n \sum_{1 \leq j \leq n, j \neq i}  \left| 1- e^{-W_i^+ W_j^-/ L_n} - \frac{W_i^+ W_j^-}{\theta n} \wedge 1 \right| \\
&\leq \frac{1}{n} \sum_{i=1}^n \sum_{j=1}^n \left( \frac{W_i^+ W_j^-}{L_n} \wedge 1 - 1 + e^{-W_i^+ W_j^-/ L_n}  \right) + \mathcal{E}_n^{CL} \\
&\leq \frac{n}{L_n} \mathbb{E}_n[ W^{(+,n)}] \mathbb{E}_n[ \hat W^{(-,n)}] - \frac{n}{L_n} \mathbb{E}_n\left[ L_n\left( 1 - e^{-W^{(+,n)} \hat W^{(-,n)}/L_n} \right) \right] + \mathcal{E}_n^{CL}.
\end{align*}
Now use that $\lim_{x \to 0} (1- e^{-x})/x = 1$ and a conditional version of Fatou's lemma to obtain that $\mathbb{E}_n \left[ L_n\left( 1 - e^{-W^{(+,n)} \hat W^{(-,n)}/L_n} \right) \right]  \to E[W^-] E[W^+]/\theta$ a.s.~to conclude that $\mathcal{E}_n^P \to 0$ a.s.~as $n \to \infty$. 

For part (d) we again have that \eqref{eq:MixedMomentUB} holds, so we only need a lower bound. Using the inequality $1- e^{-x} \leq x \wedge 1$ for $x > 0$, we obtain 
\begin{align*}
&\frac{1}{n}  \sum_{i=1}^n \sum_{1 \leq j \leq n, j \neq i} \sum_{1 \leq k \leq n, k \neq i} p_{ji}^{(n)} p_{ik}^{(n)}  \\
&\geq n^2 \mathbb{E}_n \left[  \left( 1 - e^{- \tilde W^{(+,n)} W^{(-,n)}/ L_n} \right) \left(1 - e^{-W^{(+,n)} \hat W^{(-,n)}/ L_n} \right)  \right] -  \frac{n}{L_n} \cdot R_n.
\end{align*}
The convergence of $d_1(F_n, F)$ now gives
\begin{align*}
&n^2 \mathbb{E}_n \left[  \left( 1 - e^{- \tilde W^{(+,n)} W^{(-,n)}/ L_n} \right) \left(1 - e^{-W^{(+,n)} \hat W^{(-,n)}/ L_n} \right)  \right]  \\
&= \frac{n^2}{L_n^2} \cdot \mathbb{E}_n \left[  \tilde W^{(+,n)} W^{(-,n)} W^{(+,n)} \hat W^{(-,n)} \left( \frac{1 - e^{- \tilde W^{(+,n)} W^{(-,n)}/ L_n}}{ \tilde W^{(+,n)} W^{(-,n)}/ L_n}  \right) \left( \frac{1 - e^{-W^{(+,n)} \hat W^{(-,n)}/ L_n}}{W^{(+,n)} \hat W^{(-,n)}/ L_n} \right)  \right] \\
&\rightarrow \frac{E[ W^+] E[W^+W^-] E[W^-]}{\theta^2} \quad \text{a.s.}
\end{align*}
as $n \to \infty$ and we already showed that $R_n \to 0$ a.s.~as $n \to \infty$.  This completes the lower bound for part (d).

\section{Numerical examples} \label{Appx.Numerical}

The second set of results in this appendix consists of numerical experiments illustrating the convergence of the distribution of PageRank to that of the attracting endogenous solution to the SFPE \eqref{eq:SFPE}. In addition, we also compare the empirical tail distribution of PageRank on each of the three models from Example~\ref{E.IRG} to the asymptotic tail distribution of $\mathcal{R}$. Since the tail distribution of $\mathcal{R}$ is proportional to that of the in-degree of the graph ($\mathcal{N} \stackrel{\mathcal{D}}{=} Z^+$), this corresponds to testing the power-law hypothesis. 

To simplify the calculations, we use the original formulation of PageRank, i.e., we set $Q_i = 1-c$ and $\zeta_i = c$ for all $1 \leq i \leq n$, where $n$ is the number of vertices in each graph. Since our main interest is in scale-free graphs, we model $W^+$ and $W^-$ as independent Pareto random variables with parameters $(\alpha, \sigma_\alpha)$ and $(\beta, \sigma_\beta)$, respectively. To satisfy Assumption~\ref{A.Types}, we choose the shape parameters $\alpha > 1$ and $\beta \geq 2$.   In all our experiments, we use the same distribution for the generic type vector $(W^+, W^-)$ in all three models, in which case, the limiting $\mathcal{R}$ is the same for all cases. 

In the first set of results, shown in Figure~\ref{fig:cdfs}, we plot the empirical cumulative distribution functions (CDFs) of the PageRank of a randomly chosen node in each of the three models and compare it to the distribution of $\mathcal{R}$.
More precisely, for each model we generate $15000$ independent graphs, each having $n = 5000$ vertices, and compute their ranks using matrix iterations, i.e., we compute
$${\bf R}^{(n,k)} = \sum_{i=0}^k {\bf Q} {\bf M}^i,$$
where ${\bf Q}$ and ${\bf M}$ are defined in Section~\ref{SSS.LocalNeighborhood}. We choose $k$ large enough to ensure that $\| {\bf R}^{(n,\infty)} - {\bf R}^{(n,k)} \|_1 < \epsilon$, with $\epsilon = 0.01$. We then collect the value of $R_1^{(n,k)}$ from each of the 15000 graphs and use these observations to construct the empirical distribution function of $R_1^{(n,k)} \approx R_1^{(n,\infty)}$. Note that since the types are i.i.d., then all the vertices in the graph have the same distribution, and therefore $R_1^{(n,\infty)} \stackrel{\mathcal{D}}{=} R_\xi^{(n,\infty)}$, where $\xi$ is a vertex in the graph uniformly chosen at random. 

To estimate the distribution of $\mathcal{R}$, we use the Population Dynamics algorithm described in \cite{Chen_Olvera_2013} using the generic branching vector $(1-c, \mathcal{N}, \{ \mathcal{C}_i \})$, where the $\{ \mathcal{C}_i\}$ are i.i.d., are independent of $\mathcal{N}$, and have distribution
$$P(\mathcal{C} \in dt) = E\left[ \frac{W^-}{E[W^-]} 1(c/(1+ Z^-) \in dt) \right],$$
with $Z^-$ a mixed Poisson  random variable with mixing parameter $W^- E[W^+]/\theta$,  and $\mathcal{N}$ a mixed Poisson with mixing parameter $W^+ E[W^-]/\theta$. To generate samples of $\mathcal{C}$ we first note that if $\mathcal{Y}$ is a mixed Poisson random variable with mixing parameter $E[W^+] U/\theta$, with $U$ a Pareto random variable with parameters $(\beta-1, \sigma_\beta)$, then $P(\mathcal{Y} = y) = E[W^- 1(Z^- = k)]/E[W^-]$. Hence, $\mathcal{C} \stackrel{\mathcal{D}}{=} c/(\mathcal{Y}+1)$, and we can generate $\mathcal{C}$ by sampling $\mathcal{Y}$.  In addition, the Population Dynamics algorithm requires two parameters, the depth of the recursion $k$ and the size of the pool $m$, which were set to be $k = 9$ and $m = 15000$. 

The values of $\alpha, \sigma_\alpha, \beta, \sigma_\beta$ and the damping factor $c$, as well as the mean degree $\mu = E[\mathcal{N}] = E[W^+] E[W^-]/\theta$, are indicated in each plot.  As we can see in Figure~\ref{fig:cdfs}, the fit of $\mathcal{R}$ to $R_\xi^{(n,\infty)}$ is very good for all three models.

\captionsetup[figure]{labelfont=bf}
\begin{figure}[th]
\centering
\subfloat[$\alpha=1.5, \beta=2.5, \sigma_{\alpha}=2, \sigma_{\beta}=5, \mu=3.49, c=0.85$]{
  \includegraphics[width=0.49\textwidth]{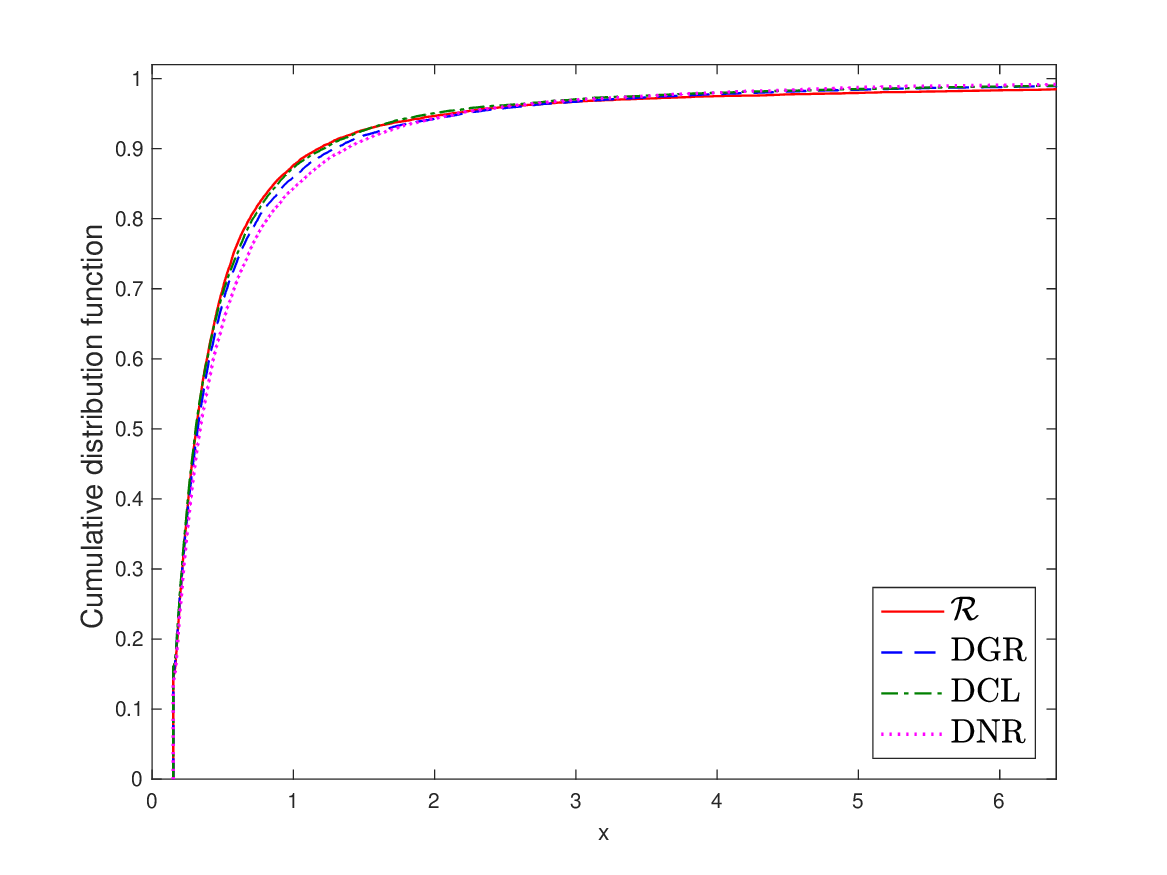}
  \label{subfig:cdf1}
  }
\subfloat[$\alpha=1.8, \beta=2.8, \sigma_{\alpha}=\frac{40}{9}, \sigma_{\beta}=\frac{45}{7}, \mu=5, c=0.45$]{
  \includegraphics[width=0.49\textwidth]{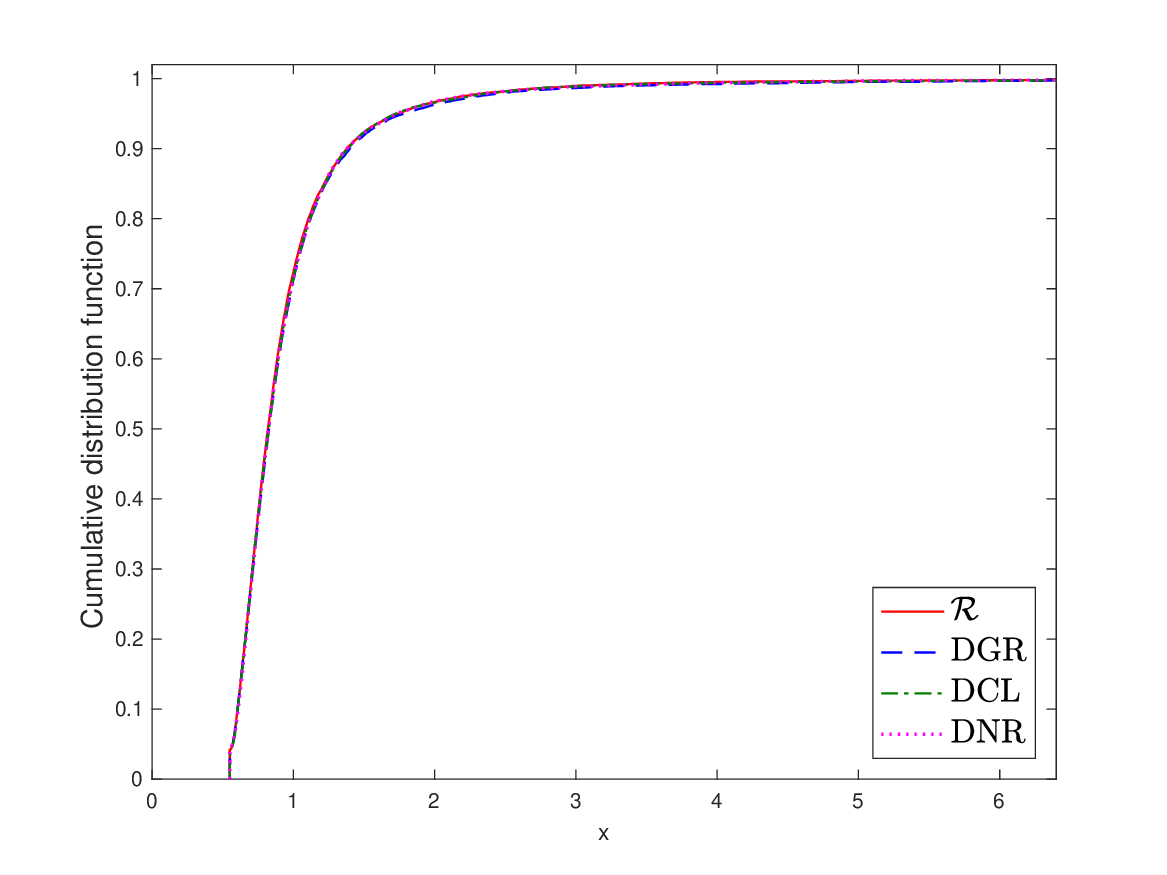}
  \label{subfig:cdf2}
  }
\caption{The CDF of $\mathcal{R}$ compared to the empirical CDFs of $R_\xi^{(n,\infty)}$, $n = 5000$, in each of the three models: directed generalized random graph, directed Chung-Lu model, directed Norros-Reittu model.}
\label{fig:cdfs}
\end{figure}

The second set of experiments compares the empirical tail CDFs of $R_\xi^{(n,\infty)}$ in each of the three models to the asymptotic tail distribution of $\mathcal{R}$, which is given by
$$P(\mathcal{R} > x) \sim  \frac{((1-c) E[\mathcal{C}])^\alpha}{(1-\rho)^\alpha (1-\rho_\alpha)} \cdot P(\mathcal{N} > x), \qquad x \to \infty,$$
where $\rho = E[\mathcal{N}] E[ \mathcal{C}] = c E[ 1- e^{-E[W^+] W^+/\theta}]$ and $\rho_\alpha = E[\mathcal{N}] E[\mathcal{C}^\alpha]$ (see Theorem~5.1 in \cite{Jel_Olv_10}).  The empirical CDFs of $R_\xi^{(n,\infty)}$ were computed as before, using 15000 independent graphs for each model.  Figure~\ref{fig:tails} shows the corresponding log-log plots. As we can see, the power-law hypothesis holds reasonably well, with the poorer fit towards the end of the tail, which is to be expected considering the finite nature of the sample.

 \begin{figure}[th]
\centering
\subfloat[$\alpha=1.5, \beta=2.5, \sigma_{\alpha}= 2, \sigma_{\beta}=5, \mu=3.49, c=0.85$]{
  \includegraphics[width=0.49\textwidth]{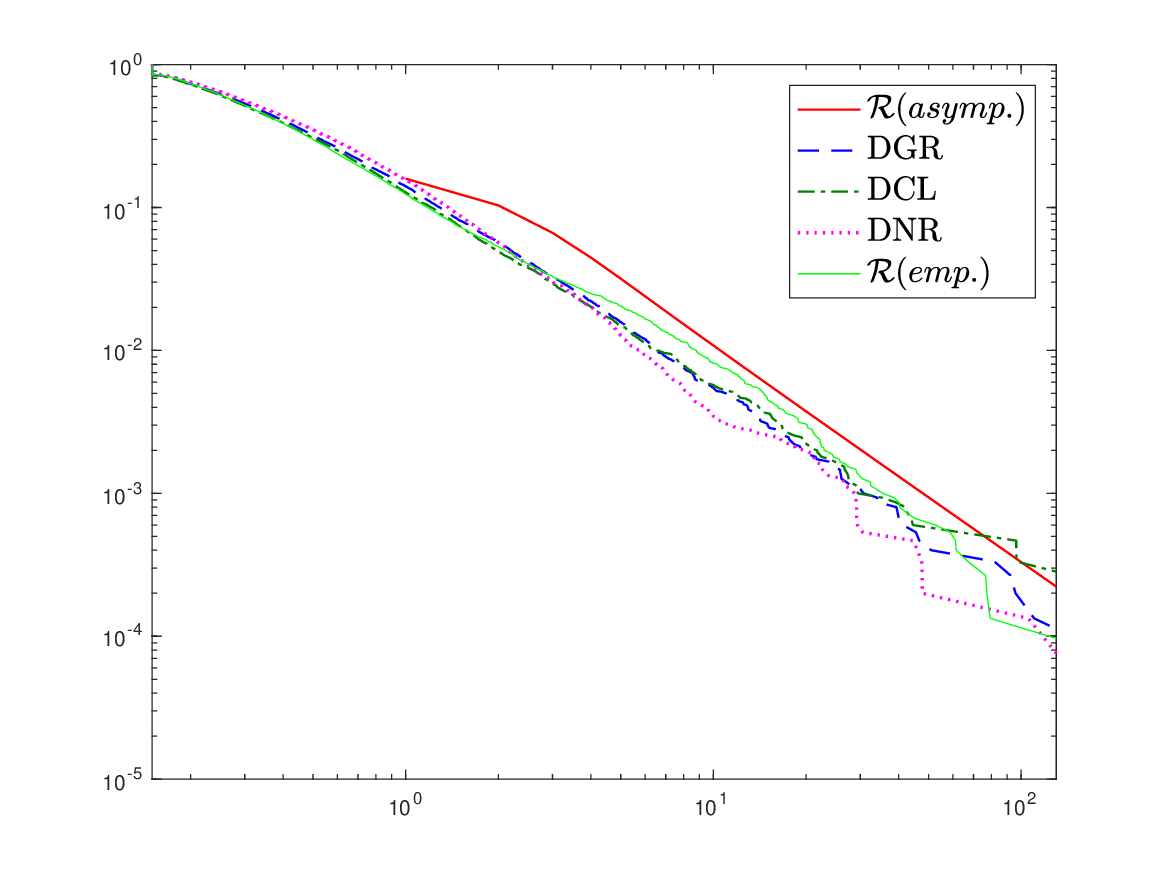}
  \label{subfig:tail1}
  }
\subfloat[$\alpha=1.8, \beta=2.8, \sigma_{\alpha}=\frac{40}{9} , \sigma_{\beta}=\frac{45}{7},  \mu=5, c=0.45$]{
  \includegraphics[width=0.49\textwidth]{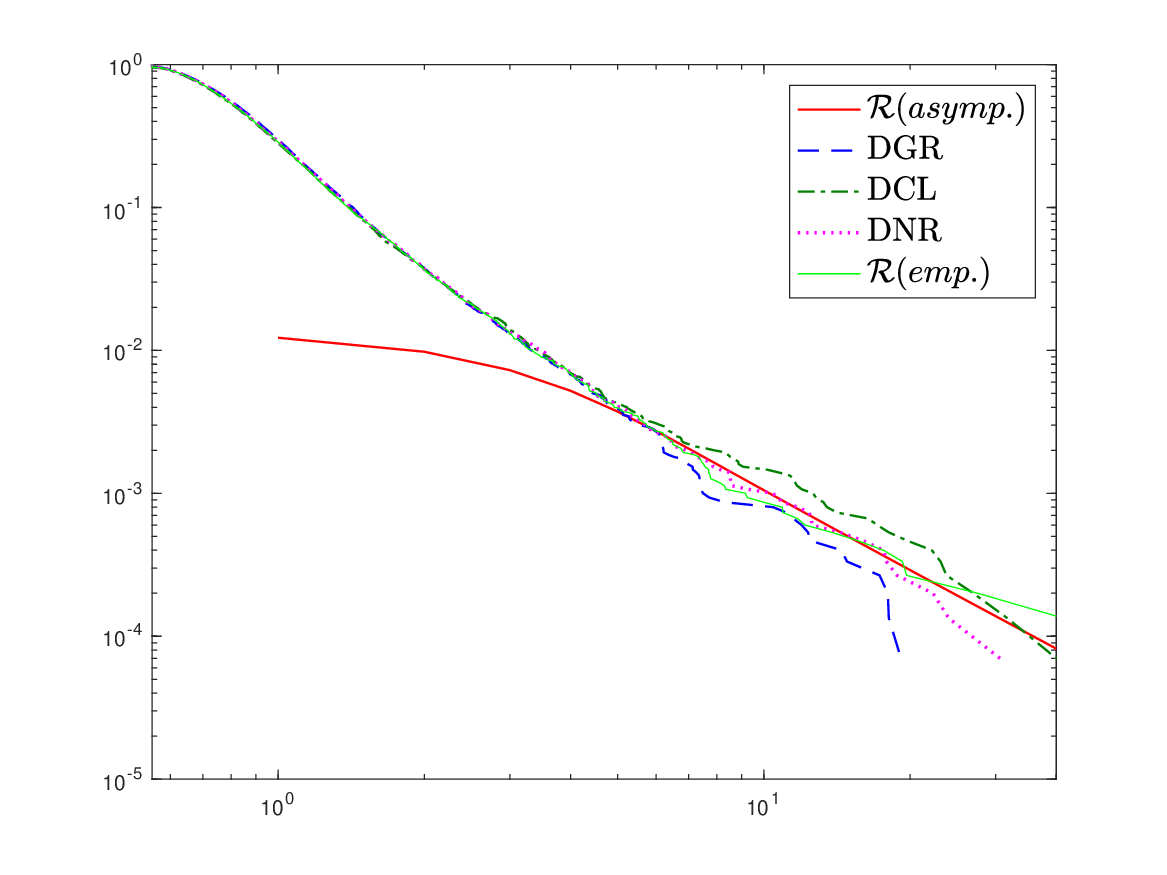}
  \label{subfig:tail2}
  }
\caption{Log-log plot of the asymptotic tail distribution of $\mathcal{R}$ compared to the empirical tail CDFs of $R_\xi^{(n,\infty)}$, $n = 5000$, in each of the three models: directed generalized random graph, directed Chung-Lu model, directed Norros-Reittu model. The plot for $\mathcal{R}(emp.)$ was generated via the Population Dynamics algorithm as for Figure~\ref{fig:cdfs}.}
\label{fig:tails}
\end{figure}

\bibliographystyle{elsarticle-num} 
\bibliography{references}


\end{document}